\title[{Coarse Structures on Groups}]
{Coarse Structures on Groups} %running header
 \author[Nicas]{Andrew Nicas$^*$}
\address{Department of Mathematics and Statistics, McMaster University, Hamilton, Ontario, Canada L8S 4K1}
\email{nicas@mcmaster.ca}
\thanks{$^*$Partially supported by a grant from
the Natural Sciences and Engineering Research Council of Canada}
\author[Rosenthal]{David Rosenthal$^\dagger$}
\address{Department of Mathematics and Computer Science, St. JohnÕs University, 8000 Utopia
Pkwy, Jamaica, NY 11439, USA}
\thanks{$^\dagger$Partially supported by a Fulbright Scholars award.}
\email{rosenthd@stjohns.edu}
\date{January 14, 2012}
\subjclass[2010]{Primary  20F69; Secondary 54H11}
\keywords{coarse structure, asymptotic dimension, topological group}
\numberwithin{equation}{section}
\begin{document}

\begin{abstract} 
We introduce the {\it group-compact coarse structure} on a Hausdorff topological group in the context of coarse structures on an abstract group which are compatible with the group operations.   We develop asymptotic dimension theory for the group-compact coarse structure generalizing several familiar results for discrete groups.  We show that the asymptotic dimension in our sense  of the free topological group on a non-empty topological space that is homeomorphic to a closed subspace of a Cartesian product of metrizable spaces is $1$.
\end{abstract}

\maketitle

\baselineskip 18pt

%%%%%%%%%%%%%%%%%%%%%%%%%%%%%%%%%%%%%%%%%%%%
%%
%%  LaTeX environments and definitions 
%%
%%%%%%%%%%%%%%%%%%%%%%%%%%%%%%%%%%%%%%%%%%%%

%% Environments
\newtheorem{theorem}{Theorem}[section]
\newtheorem{lemma}[theorem]{Lemma}
\newtheorem{proposition}[theorem]{Proposition}
\newtheorem{corollary}[theorem]{Corollary}
\newtheorem{claim}[theorem]{Claim}
\newtheorem*{theorem*}{Theorem}

\theoremstyle{definition}
\newtheorem{definition}[theorem]{Definition}

\theoremstyle{remark}
\newtheorem{remark}[theorem]{Remark}

\theoremstyle{definition}
\newtheorem{example}[theorem]{Example}

\newcommand{\ra}{{\rightarrow}}
\newcommand{\bs}{{\backslash}}
\newcommand{\lra}{{\longrightarrow}}

% operator names

\newcommand{\asdim}{\operatorname{asdim}}
\newcommand{\id}{\operatorname{id}}
\newcommand{\supp}{\operatorname{supp}}
\newcommand{\Ebar}{\underbar{\rm{E}}}

\newcommand{\tb}{{\large \textbullet}}

\newcommand{\Zhalf}{{\mathbb Z}[1/2]}

%\newcommand{\del}{{\partial}}

% calligraphic letters
\newcommand{\cC}{\mathcal{C}}
\newcommand{\cD}{\mathcal{D}}
\newcommand{\cE}{{\mathcal E}}
\newcommand{\cF}{{\mathcal F}}
\newcommand{\cH}{{\mathcal H}}
\newcommand{\cJ}{{\mathcal J}}
\newcommand{\calo}{\mathcal{O}}
\newcommand{\cP}{{\mathcal P}}
\newcommand{\cT}{{\mathcal T}}
\newcommand{\cU}{{\mathcal U}}
\newcommand{\cV}{{\mathcal V}}
\newcommand{\cW}{{\mathcal W}}

% fractur letters
\newcommand{\N}{\mathfrak{N}}

% hatted characters
%\newcommand{\hPhi}{{\widehat \Phi}}

% tilde characters
\newcommand{\tS}{{\widetilde S}}

% barred characters
%\newcommand{\bPhi}{{\bar \Phi}}

%roman characters
\newcommand{\B}{{\rm B}}
\newcommand{\E}{{\rm E}}

% bold characters
\newcommand{\K}{{\mathbf K}}
\newcommand{\HH}{{\mathbf{HH}}}

% black board bold characters
\newcommand{\bN}{{\mathbb N}}
\newcommand{\R}{{\mathbb R}}
\newcommand{\Z}{{\mathbb Z}}

% underlined characters
%\newcommand{\n}{{\underline n}}

%%%%%%%%%%%%%%%%%%%%%%%%%%%%%%%%%%%%%%%%%%%%
%%
%%  Section Introduction
%%
%%%%%%%%%%%%%%%%%%%%%%%%%%%%%%%%%%%%%%%%%%%%

\section{Introduction}   %delete the * to get a numbered introduction

The notion of {\it asymptotic dimension} was introduced by Gromov as a tool for studying the large scale geometry of groups.
Yu stimulated widespread interest in this concept when he proved that the Baum-Connes assembly map in topological $K$-theory is a split injection for torsion-free groups with finite asymptotic dimension~\cite{Yu}. 
The asymptotic dimension of a metric space $(X,d)$ is defined to be the smallest integer $n$ such that for any positive number $R$, there exists a uniformly bounded cover of $X$ of multiplicity less than or equal to $n+1$ whose Lebesgue number is at least $R$ (if no such integer exists we say that the asymptotic dimension of $(X,d)$ is infinite).
A finitely generated group can be viewed as a metric space by giving it the word-length metric with respect to a given finite generating set.  The asymptotic dimension of this metric space is independent of the choice of the finite generating set and hence is an invariant of the group. The class of groups that have finite asymptotic dimension includes word hyperbolic groups, cocompact discrete subgroups of virtually connected Lie groups and mapping class groups.  However, there exist finitely generated groups, indeed finitely presented groups,  with infinite asymptotic dimension, for example Thompson's group $F$.

Roe generalized the notion of asymptotic dimension to {\it coarse spaces} \cite[\S 2]{Roe}.
A {\it coarse structure} on a set $X$ is a collection of subsets of $X \times X$ called {\it entourages} or {\it controlled sets} satisfying certain axioms (see Definition  \ref{def:coarse_structure}).  A set together with a coarse structure is a coarse space.
For a metric space $(X,d)$ equipped with the {\it bounded} or {\it metric coarse structure}  Roe's definition reduces to the original definition of asymptotic dimension for $(X,d)$.

We say that a coarse structure on an abstract group $G$ is {\it compatible} if every entourage is contained in a $G$-invariant entourage (Definition \ref{def:compatible_coarse}). 
We show that any such coarse structure on $G$ is obtained from a {\it generating family}, that is, a collection $\cF$ of subsets of $G$ satisfying certain axioms
(listed in Definition \ref{generating_family}), by means of the following construction.
Given a generating family $\cF$,  the collection
\[
\cE_{\cF} = \{ E \subset G \times G ~{\big |}~  \text{there exists } A \in \cF \text{ such that }  E \subset G(A \times A) \}
\]
is a compatible coarse structure on $G$.
For example, the collection $\cF=\operatorname{fin}(G) $ of all finite subsets of $G$ is a generating family (Example \ref{EX_finite_subsets})
and we call the corresponding coarse structure  the {\it group-finite coarse structure}.
If $G$ is a finitely generated group then the group-finite coarse structure coincides with the bounded coarse structure for a word metric on $G$;
indeed, this remains valid for a countable, infinitely generated group $G$  for an appropriate ``weighted'' word metric on $G$ corresponding to an infinite generating set (\cite[Remark 2]{Dranish_and_Smith}).

In this paper we  introduce the {\it group-compact coarse structure} on an arbitrary Hausdorff topological group $G$ (Example \ref{EX_group_compact}).
This coarse structure corresponds to the generating family $\cF = \cC(G)$  consisting of all compact subsets of $G$ and thus  depends only on the group structure and topology of $G$.
In particular, the asymptotic dimension of a Hausdorff topological group $G$, which we denote by $\asdim(G)$,
 is well-defined as the asymptotic dimension of $G$ with respect to the group-compact coarse structure.
When $G$ admits a left-invariant metric such that the bounded subsets with respect to the metric are precisely the relatively compact subsets with respect to the given topology of $G$, then the group-compact coarse structure coincides with the bounded coarse structure on $G$ (see Theorem~\ref{group-compact_equals_metric}).
However, not every $G$ admits such a metric (see Proposition \ref{no_left_invariant}).
Our definition of asymptotic dimension for a Hausdorff topological group $G$ is sensitive to the topology of $G$.
For example, if one considers the additive group of real numbers $\R$ with its usual topology, then $\asdim( \R )=1$, whereas if $\R$ is given the discrete topology, then its asymptotic dimension is infinite, since it contains closed subgroups isomorphic to $\Z^n$  for every $n$ and $\asdim( \Z^n)=n$.

Many of the facts about classical asymptotic dimension for finitely generated groups have analogs for our generalized definition of asymptotic dimension.
For example, if $G$ is a Hausdorff topological group with a compact set of generators, then the asymptotic dimension of $G$ with respect to the group-compact coarse structure is zero if and only if $G$ is compact~(Corollary~\ref{asdim_zero_group_compact}).
If $H$ is a closed subgroup of  $G$, then $\asdim H \leq \asdim G$~(Corollary \ref{asdim_of_a_closed_subgroup}). 
As a consequence, all discrete subgroups of virtually connected Lie groups have finite asymptotic dimension, whether or not they are finitely generated (Example \ref{Lie_groups}).
We show that the asymptotic dimension of $G$ is the supremum of the asymptotic dimensions of its closed subgroups which have a dense subgroup with a compact set of algebraic generators (Corollary \ref{asdim_by_subgroups_group_compact}).
We also have the following theorem for an extension of Hausdorff topological groups.

\begin{theorem*}[Theorem \ref{extension_of_top_groups}] Let $1\to N\xrightarrow{i}  G\xrightarrow{\pi} Q\to 1$ be an extension of Hausdorff topological groups, where $i$ is a homeomorphism onto its image and every compact subset of $Q$ is the image under $\pi$ of a compact subset of $G$. 
If $\asdim(N)\leq n$ and $\asdim(Q)\leq k$  then $\asdim(G)\leq (n+1)(k+1)-1$.
In particular, if $N$ and $Q$ have finite asymptotic dimension, then $G$ has finite asymptotic dimension.
\end{theorem*}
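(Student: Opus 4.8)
The plan is to prove the coarse analogue of the Bell--Dranishnikov theorem on group extensions, working throughout with entourages rather than metrics, so that the classical ``fattening'' of covers becomes automatic. As a preliminary reduction, since $Q$ is Hausdorff the kernel $\ker\pi$ is closed in $G$, and since $i$ is a homeomorphism onto its image we may identify $N$ with $\ker\pi$; by the results on closed subgroups (see Corollary \ref{asdim_of_a_closed_subgroup}) the group-compact coarse structure on $N$ then coincides with the structure it inherits from $G$, so $\asdim N\le n$ holds for that restricted structure. By the usual reformulation of the inequality $\asdim\le m$ in terms of colored covers, it suffices to show: for every symmetric compact $A\subseteq G$ with $e\in A$ there is a uniformly bounded cover of $G$ that splits into $(n+1)(k+1)$ families, each one $G(A\times A)$-disjoint.

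First I would build and lift a cover of the quotient. Since $\pi$ is a homomorphism, $(\pi\times\pi)\big(G(A\times A)\big)\subseteq Q(\pi(A)\times\pi(A))$ and $\pi(A)$ is compact, so $\asdim Q\le k$ provides a uniformly bounded cover $\cV=\cV^0\cup\cdots\cup\cV^k$ of $Q$ in which each $\cV^j$ is $Q(\pi(A)\times\pi(A))$-disjoint. For every nonempty $V\in\cV$ pick $q_V\in V$ and a lift $\tilde q_V\in\pi^{-1}(q_V)$. Uniform boundedness of $\cV$ yields a fixed compact $B\subseteq Q$ with $V\subseteq q_VB$ for all $V$; using the hypothesis that every compact subset of $Q$ is the $\pi$-image of a compact subset of $G$, fix a compact $P\subseteq G$ with $\pi(P)\supseteq B$. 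An elementary computation then gives $\tilde q_V^{-1}\,\pi^{-1}(V)\subseteq NP$ for every $V$, so each fibered piece $\pi^{-1}(V)$ is a left translate of a subset of the \emph{single} set $NP$.

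Next I would apply $\asdim N\le n$ on $NP$. The inclusion $N\hookrightarrow NP$ is a coarse equivalence, because $NP$ is contained in the $G\big((P\cup P^{-1}\cup\{e\})\times(P\cup P^{-1}\cup\{e\})\big)$-neighbourhood of $N$; hence $\asdim(NP)\le n$, and applied to the entourage $G(A\times A)\cap(NP\times NP)$ this gives a uniformly bounded cover $\cU=\cU^0\cup\cdots\cup\cU^n$ of $NP$ with each $\cU^i$ being $\big(G(A\times A)\cap(NP\times NP)\big)$-disjoint. Now set
\[
\cW^{(i,j)}=\{\,\tilde q_V U\cap\pi^{-1}(V)\ :\ V\in\cV^j,\ V\neq\emptyset,\ U\in\cU^i\,\},\qquad 0\le i\le n,\ 0\le j\le k,
\]
and $\cW=\bigcup_{i,j}\cW^{(i,j)}$. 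Since $G(A\times A)$ and (after enlarging) the bounding entourage of $\cU$ are left-invariant, translating by $\tilde q_V$ shows that $\cW$ is uniformly bounded and that members of $\cW^{(i,j)}$ arising from one common $V$ are $G(A\times A)$-disjoint; members arising from distinct $V,V'\in\cV^j$ are $G(A\times A)$-disjoint because they map under $\pi$ into $V$ and $V'$, which are $Q(\pi(A)\times\pi(A))$-disjoint, while $(\pi\times\pi)$ carries $G(A\times A)$ into $Q(\pi(A)\times\pi(A))$. Finally $\cW$ covers $G$: given $g\in G$, choose $V\in\cV^j$ with $\pi(g)\in V$; then $\tilde q_V^{-1}g\in NP$ lies in some $U\in\cU^i$, so $g\in\tilde q_V U\cap\pi^{-1}(V)$. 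This exhibits the desired decomposition into $(n+1)(k+1)$ families, giving $\asdim G\le(n+1)(k+1)-1$, and the last sentence of the theorem is then immediate.

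The step I expect to be the main obstacle is the \emph{uniformity} of the fibered covers: the covers of the various $\pi^{-1}(V)$ must be simultaneously uniformly bounded and disjoint for one common entourage of $G$. This is handled precisely by realizing each $\pi^{-1}(V)$ as a left translate of a subset of the one fixed set $NP$ and invoking left-invariance of the coarse structure together with the coarse equivalence $NP\simeq N$. The two hypotheses on the extension enter exactly here: the lifting property for compact sets produces the compact $P$ (equivalently, keeps $NP$ coarsely close to $N$), and the requirement that $i$ be a homeomorphism onto its image is what allows the identification of the coarse structure on $N$ with the restricted one. The ``fattening'' of the quotient cover needed in the metric proof is unnecessary in this language, because the relevant entourage of $Q$ is obtained simply by pushing $G(A\times A)$ forward along $\pi$.
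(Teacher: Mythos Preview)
Your proof is correct and follows essentially the same Bell--Dranishnikov fibering strategy as the paper: produce a coarse cover indexed by the quotient, realize each piece as a left translate of a single ``fiber template'' (your $NP$, the paper's $NK'$), and refine using a cover of that template coming from $\asdim N\le n$. The one technical difference is in how the quotient cover is obtained. The paper first proves an abstract Extension Theorem (Theorem~\ref{extension_theorem}) for an arbitrary generating family $\cF$: it replaces the coarse structure on $G$ by the enlarged structure $\cE_{N\cF}$ and invokes the coarse equivalence $(G,\cE_{N\cF})\simeq(Q,\cE_{\pi(\cF)})$ of Proposition~\ref{coarse_equiv_one}, so that the ``cover of $Q$'' is already a cover of $G$ with $NK$-disjoint colors and no explicit lifting is needed; the specialization to the group-compact case is then a one-line check that $\pi(\cC(G))=\cC(Q)$ and $\cC(G)|_{i(N)}=\cC(i(N))$. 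You instead lift a cover of $Q$ directly via preimages $\pi^{-1}(V)$ and use Property~(K) to find the compact $P$. The two constructions yield literally the same refined cover up to relabeling (your $\tilde q_V$ plays the role of the paper's $g_U$, and your $\cU$ on $NP$ corresponds to the paper's $\{VK':V\in\cV\}$ on $NK'$), so neither approach buys anything the other doesn't; the paper's route just isolates the purely group-theoretic content from the topological hypotheses.
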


The {\it free topological group} on a topological space is the analog, in the category of Hausdorff topological groups,  of the free group on a set  in the category of groups.
The free topological group on a non-discrete space is typically {\it not} locally compact (see the discussion following Proposition  \ref{not_locally_compact}).
We show:

\begin{theorem*}[Theorem~\ref{asdim_of _freetop_general}]
If $X$ is a non-empty space which is homeomorphic to a closed subspace of a Cartesian product of metrizable spaces
then the asymptotic dimension of the free topological group on $X$ is $1$.
\end{theorem*}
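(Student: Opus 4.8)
The plan is to prove $\asdim(F(X))=1$ by establishing $\asdim(F(X))\ge 1$ and $\asdim(F(X))\le 1$ separately, where $F(X)$ denotes the free topological group on $X$. For the lower bound, note first that $X$, being homeomorphic to a subspace of a product of metrizable spaces, is Tychonoff, so $F(X)$ is Hausdorff, its underlying abstract group is the free group on the set $X$, and $X$ sits in $F(X)$ as a closed subspace. Since $X\neq\emptyset$, fix $x_0\in X$. The homomorphism $F(X)\to F(\{x_0\})$ induced by the constant map $X\to\{x_0\}$ splits the inclusion $F(\{x_0\})\hookrightarrow F(X)$, so the latter is a closed topological embedding and $\langle x_0\rangle$ is a closed subgroup of $F(X)$ topologically isomorphic to $\Z$ with the discrete topology. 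By Corollary~\ref{asdim_of_a_closed_subgroup}, $\asdim(\Z)\le\asdim(F(X))$, and since $\asdim(\Z)=1$ this gives $\asdim(F(X))\ge 1$.

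For the upper bound, write $F_n(K)$ for the set of products of at most $n$ members of $K\cup K^{-1}$ inside $F(X)$, for $K\subseteq X$ and $n\ge 0$; if $K$ is compact then so is $F_n(K)$, being the image of $(K\sqcup K^{-1}\sqcup\{e\})^n$ under multiplication. The group-compact coarse structure is generated by the entourages $F(X)(A\times A)$ with $A$ compact, so the first step is the reduction: \emph{every compact subset of $F(X)$ is contained in some $F_n(K)$ with $K\subseteq X$ compact.} Granting this, it suffices to produce, for each compact $K\subseteq X$ and each $m\ge 1$, a uniformly bounded cover of $F(X)$ that splits into two families each of which is $E$-disjoint for $E=\{(g,h):g^{-1}h\in F_m(K)\}$, since every entourage of the group-compact structure is contained in such an $E$. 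Set $H=\langle K\rangle\le F(X)$; as $K$ is part of the free basis $X$ of the underlying group, $H$ is a free group with basis $K$. Then $F(X)$ is the disjoint union of the left cosets of $H$, and $E$ relates two elements only if they lie in a common coset, and then only if the $K$-word length of $g^{-1}h$ is at most $m$; moreover each coset $vH$, with the restricted structure, is isometric via $g\mapsto v^{-1}g$ to the Cayley graph of $H$ with respect to $K$, which is a tree.

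On this tree I would run the standard argument that a tree has asymptotic dimension at most $1$, in its quantitative form: taking the identity as base point and $\ell$ for the $K$-word length, cover $H$ by the ball $\{g:\ell(g)<m\}$ together with the blocks $\{g:u\text{ is a prefix of }g,\ \ell(g)\in[km,(k+1)m)\}$ indexed by $k\ge 1$ and by $u$ with $\ell(u)=(k-1)m$, and colour a block of generation $k$ by $k\bmod 2$. Using $\ell(g^{-1}h)=\ell(g)+\ell(h)-2p$, where $p$ is the length of the longest common prefix of $g$ and $h$, one checks that two distinct blocks of the same colour are at $\ell$-distance greater than $m$ — at least $2m$ when they have the same generation, since then $p\le(k-1)m-1$ while $\ell(g),\ell(h)\ge km$, and otherwise the lengths already differ by more than $m$ — and that every block $P$ satisfies $P^{-1}P\subseteq F_{4m}(K)$. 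Transporting this cover into each coset by left translation and taking the union of the two colour classes over all cosets produces the required cover of $F(X)$: the two classes are $E$-disjoint because elements of distinct cosets are never $E$-related, and the cover is uniformly bounded because $Q^{-1}Q\subseteq F_{4m}(K)$ for every block $Q$ in every coset, and $F_{4m}(K)$ is a single compact set. This gives $\asdim(F(X))\le 1$, and with the lower bound, $\asdim(F(X))=1$.

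The hard part is the reduction step — that every compact subset of $F(X)$ is supported, with bounded length, on a compact subset of $X$ — since this is the only place where the hypothesis on $X$ is used. For metrizable $X$ this is classical; for $X$ a closed subspace of a product of metrizable spaces one must exploit the completeness-type properties of this class (their bounded, equivalently pseudocompact, subspaces are relatively compact) to pass from the length-boundedness of a compact subset of $F(X)$ to the relative compactness of its support in $X$. I expect the technical weight of the argument to rest here, on the (notoriously delicate) topology of free topological groups, rather than on the coarse-geometric construction, which is essentially the classical tree argument repackaged for the group-compact coarse structure.
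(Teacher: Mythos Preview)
Your proposal is correct and rests on the same two pillars as the paper's proof: the ``reduction step'' that every compact subset of $F(X)$ lies in some $F_n(K)$ with $K\subset X$ compact (the paper obtains this from \cite[Corollaries 7.4.4 and 7.5.6]{Arhangel}, using Dieudonn\'e completeness exactly as you anticipate), and the tree structure of the Cayley graph of a free group. The difference is organizational. The paper packages the argument modularly: it first proves the compact case separately (Proposition~\ref{group-compact_equals_bounded_for_freetop} and Corollary~\ref{asdim_of_freetop_on_compact}), showing that for compact $A$ the group-compact coarse structure on $F_{\rm top}(A)$ coincides with the word-metric bounded structure and then invoking the standard tree result; it then reduces the general case to the compact one via the subgroup theorem (Theorem~\ref{asdim_by_subgroups}), applied to the family of closed subgroups $F_{\rm top}(X,A)\cong F_{\rm top}(A)$. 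You instead carry out the coset-by-coset tree cover directly on all of $F(X)$, which is essentially the proof of Theorem~\ref{asdim_by_subgroups} specialized to this situation and fused with the explicit tree-cover construction. Your route is more self-contained and avoids citing the identification $F_{\rm top}(X,A)\cong F_{\rm top}(A)$ as topological groups; the paper's route is shorter on the page because it reuses general machinery already developed. Your lower-bound argument via a closed discrete copy of $\Z$ is also a bit cleaner than the paper's, which gets $\ge 1$ only implicitly through the compact case.
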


The paper is organized as follows.  In Section 2 we develop the general theory of compatible coarse structures on a group and apply it to topological groups.
Asymptotic dimension theory in our framework is treated in Section 3.
In Section 4 we compute the asymptotic dimension of a free topological group.

%%%%%%%%%%%%%%%%%%%%%%%%%%%%%%%%%%%%%%%%%%%%
%%
%%  Section: Compatible coarse structures on  a group
%%
%%%%%%%%%%%%%%%%%%%%%%%%%%%%%%%%%%%%%%%%%%%%

\section{Compatible coarse structures on  a group}
\label{sec:compatible_coarse_structures}

In this section we introduce the notion of a {\it compatible coarse structure} on a group $G$ (Definition \ref{def:compatible_coarse}) and show that any such coarse structure on $G$ is obtained from a {\it generating family}, that is, a collection of subsets of $G$ satisfying certain axioms (Definition \ref{generating_family}, Propositions \ref{assoc_coarse} and \ref{assoc_coarse}).
We give several classes of examples of  compatible coarse structures on a group
(Examples
\ref{EX_pseudo_norm}, 
\ref{EX_group_compact},
\ref{EX_restricted_card},
\ref{EX_finite_subsets} and
\ref{EX_bounded}).
Of particular interest is the {\it group-compact} coarse structure on a Hausdorff topological group (Example \ref{EX_group_compact})
and its generalizations (Remark \ref{group_compact_generalized}).
Necessary and sufficient conditions for the group-compact coarse structure on a topological group to coincide with the bounded 
coarse structure  associated to a left invariant metric are given in Theorem \ref{group-compact_equals_metric};
also see Propositions \ref{locally_compact_groups} and \ref{no_left_invariant}.
A characterization of the bounded sets for a group with a compatible coarse structure is given in Proposition \ref{prop_bounded}.
We give a criterion for a surjective homomorphism of groups with compatible coarse structures to be a coarse equivalence
(Corollary \ref{coarse_equiv_cor}) and also a criterion for the inclusion of a subgroup to be a coarse equivalence
(Proposition \ref{coarse_equiv_two}).
These results are applied to Hausdorff topological groups with the group-compact structures
(Propositions \ref{coarse_equiv_compact_normal_subgroup} and  \ref{coarse_equiv_cocompact_subgroup}).

We recall Roe's theory of coarse structures and coarse spaces  (\cite[\S 2]{Roe}).
Let $X$ be a set.   The {\it inverse} of a subset $E$ of $X \times X$, denoted $E^{-1}$,  is the set
\[
E^{-1}   =  \{(y, x) \in X \times X ~{\big |}~  (x, y) \in E \}.
\]
For  subsets  $E_1$ and $E_2$ of $X \times X$, the {\it composition} of $E_1$ and $E_2$, denoted $E_1 \circ E_2$,   is the set
\[
E_1 \circ E_2  =  \{(x, z) \in X \times X ~{\big |}~   \text{there exists } y \in X \text{ such that } (x, y) \in E_1 \text{ and } (y, z) \in E_2\}.
\]

\begin{definition}  (\cite[Definition 2.3]{Roe})
\label{def:coarse_structure}
A {\it coarse structure} on a set $X$ is a collection $\cE$ of subsets of $X \times X$, called {\it entourages},
 satisfying the following properties:
\begin{itemize}

\item[(a)] The diagonal,  $\Delta_X = \{(x, x) ~{\big |}~  x \in X \}$, is an entourage.

\item[(b)]  A subset of an entourage is an entourage.

\item[(c)]  A finite union of entourages is an entourage.

\item[(d)]  The inverse of entourage is an entourage.

\item[(e)]  The composition of two entourages is an entourage.
\end{itemize}
The pair $(X, \cE)$ is called a {\it coarse space}.
\end{definition}

Let $G$ be  a group.   For subsets $A$ and $B$ of  $G$ we write
$AB = \{ ab ~{\big |}~  a \in A \text{ and } b \in B\}$   and
$A^{-1} = \{ a^{-1} ~{\big |}~  a \in A \}$.
The group $G$ acts diagonally on the product $G \times G$ and we say that  $E \subset G \times G$ is {\it $G$-invariant} if  $GE = E$
where
$GE = \{ (ga, gb) ~{\big |}~   (a,b) \in E \text{ and } g \in G \}$.

\begin{definition}
\label{def:compatible_coarse}
A  coarse structure $\cE$ on a group $G$ is {\it compatible} if every entourage is contained in a $G$-invariant entourage.
\end{definition}

We describe a method of obtaining compatible coarse structures on a given group $G$.

\begin{definition}
\label{generating_family}
A family $\cF$  of subsets of $G$ is a {\it generating family for a compatible coarse structure on $G$}  (abbreviated as ``generating family on $G$'')
if it has the following properties:
\begin{itemize}

\item[(a)]  There exists $A \in \cF$ which is non-empty.

\item[(b)]  A finite union of  elements of $\cF$ is in $\cF$.

\item[(c)]  If $A$ and $B$ are in $\cF$ then $AB$ is in $\cF$.

\item[(d)]  If $A$ is in $\cF$ then $A^{-1}$ is in $\cF$.
\end{itemize}

\end{definition}

Our terminology is justified by the following propositions.

\begin{proposition}
\label{assoc_coarse}
Let $\cF$ be a generating family on $G$ as in Definition \ref{generating_family}.   Define
\[
\cE_{\cF} = \{ E \subset G \times G ~{\big |}~  \text{there exists } A \in \cF \text{ such that }  E \subset G(A \times A) \}.
\]
Then  $\cE_{\cF}$ is a compatible coarse structure on $G$.
\end{proposition}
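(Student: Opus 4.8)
The plan is to verify directly that $\cE_{\cF}$ satisfies the five axioms (a)--(e) of Definition \ref{def:coarse_structure} together with the compatibility condition of Definition \ref{def:compatible_coarse}. The observation I would use throughout is that for any subset $A \subset G$ the set $G(A \times A) = \{(ha, hb) : a, b \in A,\ h \in G\}$ is automatically $G$-invariant (since $gG = G$ for every $g \in G$) and equal to its own inverse (since $A \times A$ is symmetric), and that $G(A \times A) \in \cE_{\cF}$ whenever $A \in \cF$. Thus the whole verification reduces to manipulating sets of the form $G(A \times A)$ and invoking the closure properties (b)--(d) of the generating family $\cF$.

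Axiom (b) is immediate from the definition of $\cE_{\cF}$. For (a), I would take a non-empty $A \in \cF$ (which exists by Definition \ref{generating_family}(a)) and a point $a_0 \in A$; then $(g,g) = (ha_0, ha_0)$ with $h = ga_0^{-1}$ shows $\Delta_G \subset G(A \times A)$. For (c) — and hence arbitrary finite unions by induction — if $E_i \subset G(A_i \times A_i)$ with $A_i \in \cF$, then $A := A_1 \cup A_2 \in \cF$ by Definition \ref{generating_family}(b) and $E_1 \cup E_2 \subset G(A \times A)$. Axiom (d) follows since $E \subset G(A \times A)$ implies $E^{-1} \subset (G(A \times A))^{-1} = G(A \times A)$.

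The only step with any real content is axiom (e). Given $E_i \subset G(A_i \times A_i)$, set $A := A_1 \cup A_2 \in \cF$, so it suffices to bound $G(A \times A) \circ G(A \times A)$. A pair $(x,z)$ in this composition satisfies $x = ha_1$, $y = ha_2 = kb_1$, $z = kb_2$ for some $h,k \in G$ and $a_1,a_2,b_1,b_2 \in A$; solving $k = ha_2b_1^{-1}$ gives $(x,z) = h\,(a_1,\ a_2b_1^{-1}b_2)$. Both coordinates then lie in $C := AA^{-1}A$ (note $a = aa^{-1}a$ shows $A \subset AA^{-1}A$), and $C \in \cF$ by Definition \ref{generating_family}(c)--(d), so $E_1 \circ E_2 \subset G(C \times C) \in \cE_{\cF}$. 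This is the place I expect the main — though still minor — obstacle, namely recognizing the $AA^{-1}A$ bound. Finally, compatibility is built into the construction: any $E \in \cE_{\cF}$ lies in some $G(A \times A)$ with $A \in \cF$, and $G(A \times A)$ is a $G$-invariant entourage, which completes the proof.
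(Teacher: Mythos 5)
Your proof is correct and follows essentially the same route as the paper: a direct verification of the coarse-structure axioms using the closure properties of $\cF$, with the composition axiom as the only step of substance. The paper's version of that step bounds $G(A\times A)\circ G(B\times B)$ by $G(A\times(AB^{-1}B))$ and then symmetrizes via a union, whereas you symmetrize first and land in $G(AA^{-1}A\times AA^{-1}A)$ — a cosmetic difference only.
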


We say that $\cE_{\cF}$ is {\it the coarse structure associated to $\cF$}.

\begin{proof}
If $A \in \cF$ is non-empty then $\Delta_G \subset G(A \times A)$ and so $\Delta_G \in \cE_{\cF}$. 
If $A, B \in \cF$ then  $G(A \times A) \cup G(B \times B)   \subset G( (A\cup B) \times (A \cup B))$ which implies that the union of two elements of $\cE_{\cF}$  is in $\cE_{\cF}$.
Observe that if $A, B \in \cF$ then  $G(A \times B) \in \cE_{\cF}$ because  $A \cup B \in \cF$ and  $G(A \times B) \subset G( (A\cup B) \times (A \cup B))$.
The composition of two element in $\cE_{\cF}$  is in $\cE_{\cF}$ because
for $A, B  \in \cF$ we have
$G(A \times A) \circ G(B \times B) \subset G(A \times (A B^{-1} B))$  and  $A B^{-1} B \in \cF$ by properties (c) and (b) in Definition \ref{generating_family}.
Hence $\cE_{\cF}$ is a coarse structure and, by definition, is compatible.
\end{proof}

We show that every compatible coarse structure $\cE$ on a group $G$ is of the form $\cE_{\cF}$  for some generating family $\cF$ on $G$.
For any group $G$ the {\it shear map}, $\pi_G \colon G \times G ~\ra~ G$, is defined by  $\pi_G(x,y) = y^{-1}x$.

\begin{proposition}
\label{every_coarse}
Let $\cE$ be a compatible coarse structure on a group $G$. 
Let $\pi_G\colon G \times G ~\ra~ G$ be the shear map.
Define  $\cF(\cE) = \{ \pi_G(E) ~{\big |}~  E \in \cE \}$.
Then  $\cF(\cE)$ is a generating family on $G$ and
$\cE = \cE_{\cF(\cE)}$.
\end{proposition}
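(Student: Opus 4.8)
The plan is to verify the two assertions of Proposition~\ref{every_coarse} in turn: first that $\cF(\cE) = \{\pi_G(E) \mid E \in \cE\}$ satisfies the four axioms of a generating family (Definition~\ref{generating_family}), and then that the coarse structure $\cE_{\cF(\cE)}$ it generates coincides with the original $\cE$. Throughout, the key computational tool is the interplay between the shear map $\pi_G(x,y) = y^{-1}x$ and the diagonal $G$-action: for a subset $A \subset G$ one has $\pi_G^{-1}(A) = G(A \times \{e\}) = \{(ga, g) \mid g \in G,\ a \in A\}$, and more usefully $G(A \times \{e\})$ is exactly the $G$-invariant set whose image under $\pi_G$ is $A$. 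I would record at the outset the identities $\pi_G(GE) = \pi_G(E)$, $\pi_G(E_1 \cup E_2) = \pi_G(E_1) \cup \pi_G(E_2)$, $\pi_G(E^{-1}) = \pi_G(E)^{-1}$, and $\pi_G(E_1 \circ E_2) \supset$ (in fact, when the sets are $G$-invariant, equals) a product of the images — these make the axiom-checking essentially formal.

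For the generating family axioms: (a) follows since $\Delta_G \in \cE$ and $\pi_G(\Delta_G) = \{e\} \neq \emptyset$. (b) Given $A_1 = \pi_G(E_1)$ and $A_2 = \pi_G(E_2)$ with $E_1, E_2 \in \cE$, axiom (c) of Definition~\ref{def:coarse_structure} gives $E_1 \cup E_2 \in \cE$, and $\pi_G(E_1 \cup E_2) = A_1 \cup A_2$. (d) If $A = \pi_G(E)$ with $E \in \cE$ then $E^{-1} \in \cE$ by axiom (d) of Definition~\ref{def:coarse_structure}, and a direct check shows $\pi_G(E^{-1}) = \{x^{-1}y : (x,y) \in E\} = \pi_G(E)^{-1} = A^{-1}$. (c) This is the one place where compatibility is essential. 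Given $A_i = \pi_G(E_i)$, I would first replace $E_i$ by a $G$-invariant entourage $\widetilde E_i \supset E_i$ (using Definition~\ref{def:compatible_coarse}); note $\pi_G(\widetilde E_i) \supset A_i$, but we want containment of products the other way, so instead take the canonical $G$-invariant representative $\widehat E_i := G(A_i \times \{e\}) = \pi_G^{-1}(A_i)$, which lies in $\cE$ because $\widehat E_i \subset \widetilde E_i$ (as $\pi_G(\widehat E_i) = A_i = \pi_G(E_i) \subset \pi_G(\widetilde E_i)$ and both are $G$-invariant, hence $\widehat E_i = \pi_G^{-1}(\pi_G(\widehat E_i)) \subset \pi_G^{-1}(\pi_G(\widetilde E_i)) = \widetilde E_i$) and $\cE$ is closed under subsets. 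Then one computes $\widehat E_1 \circ \widehat E_2 = G(A_1 A_2 \times \{e\})$, which is in $\cE$ by axiom (e), and its image under $\pi_G$ is $A_1 A_2$. (To see $\widehat E_1 \circ \widehat E_2 = G(A_1A_2\times\{e\})$: a pair $(x,z)$ lies in the composition iff there is $y$ with $(x,y)\in G(A_1\times\{e\})$ and $(y,z)\in G(A_2\times\{e\})$, i.e. $xy^{-1}\in A_1$ and $yz^{-1}\in A_2$, i.e. $xz^{-1}\in A_1A_2$.) Hence $A_1 A_2 \in \cF(\cE)$.

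For the equality $\cE = \cE_{\cF(\cE)}$: the inclusion $\cE \subset \cE_{\cF(\cE)}$ is easy — given $E \in \cE$, set $A = \pi_G(E) \in \cF(\cE)$; then $E \subset \pi_G^{-1}(A) = G(A \times \{e\}) \subset G(A' \times A')$ where $A' = A \cup \{e\} \in \cF(\cE)$ (using axioms (a),(b) of the generating family to adjoin $\{e\} = \pi_G(\Delta_G)$), so $E \in \cE_{\cF(\cE)}$. Conversely, suppose $E \in \cE_{\cF(\cE)}$, so $E \subset G(A \times A)$ for some $A \in \cF(\cE)$, say $A = \pi_G(E_0)$ with $E_0 \in \cE$. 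Writing $G(A \times A) = G\big((A \times \{e\}) \circ (\{e\} \times A)\big)$ and observing $A \times \{e\} \subset \pi_G^{-1}(A)$ while $\pi_G^{-1}(A)$ is obtained from $E_0$ by passing to a $G$-invariant enclosure and then a subset as above — more cleanly, $G(A\times A) = \pi_G^{-1}(A) \circ (\pi_G^{-1}(A))^{-1}$, and $\pi_G^{-1}(A) = \widehat E_0 \in \cE$ as shown in the previous paragraph — we get that $G(A \times A) \in \cE$ by axioms (d) and (e), hence $E \in \cE$ by axiom (b).

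The main obstacle is axiom (c) for the generating family and, correspondingly, the reverse inclusion $\cE_{\cF(\cE)} \subset \cE$: both hinge on the lemma that for $A = \pi_G(E)$ with $E \in \cE$, the saturation $\pi_G^{-1}(A)$ again belongs to $\cE$. This is precisely where the compatibility hypothesis enters, and getting the set-theoretic bookkeeping right — that $\pi_G^{-1}(\pi_G(\widetilde E)) = \widetilde E$ for $G$-invariant $\widetilde E$, so that $\pi_G^{-1}(A) \subset \widetilde E$ — is the crux. Everything else is a routine unwinding of the definitions of $\pi_G$, $\circ$, $^{-1}$, and the diagonal $G$-action.
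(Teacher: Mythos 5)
Your proof is correct and follows essentially the same route as the paper: both arguments rest on the fact that, by compatibility, every $A \in \cF(\cE)$ is the shear image of a $G$-invariant entourage, together with the bijection $\pi_G^{-1}(\pi_G(F)) = F$ between $G$-invariant subsets of $G \times G$ and subsets of $G$; your explicit saturation lemma ($\pi_G^{-1}(A) \in \cE$ for $A \in \cF(\cE)$) is just a clean packaging of what the paper does by element chases via the inclusions $AB \subset \pi_G(E' \circ E)$ and $G(\pi_G(E) \times \pi_G(E)) \subset E \circ E^{-1}$. One small correction: since $\pi_G(x,y) = y^{-1}x$, membership $(x,y) \in G(A_1 \times \{e\})$ means $y^{-1}x \in A_1$, not $xy^{-1} \in A_1$, so in fact $\widehat E_1 \circ \widehat E_2 = G(A_2 A_1 \times \{e\})$ rather than $G(A_1 A_2 \times \{e\})$; this harmlessly reverses the order of the product (compose in the opposite order, or note that closure under products over all ordered pairs is unaffected), but the verification as written is wrong for non-abelian $G$ and is worth fixing.
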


\begin{proof}
We first show that $\cF(\cE)$ is a generating family on $G$, that is,
properties (a) through (d) of Definition \ref{generating_family} hold for  $\cF(\cE)$.
Property (a) is obvious.
Property (b) follows from the equality $\pi_G(E) \cup \pi_G(E') = \pi_G(E  \cup E')$.
Assume that $A \subset \pi_G(E)$ and $B \subset \pi_G(E')$  where $E, E' \in \cE$ are $G$-invariant.
We claim that $AB \subset \pi_G(E' \circ E)$ from which it follows that
$AB = \pi_G(\pi_G^{-1}(AB) \cap (E' \circ E)) \in \cF(\cE)$.
Let $a = y^{-1}x \in A$ where  $(x,y) \in E$ and
$b = v^{-1}u \in B$ where  $(u,v) \in E'$.
Since $E$ and $E'$ are $G$-invariant,  we have
$(1, x^{-1}y) = x^{-1}(x,y) \in E$ and
$(v^{-1}u,1) = v^{-1}(u,v) \in E'$.
Hence $(v^{-1}u, x^{-1}y)  \in E' \circ E$ and so
$ab = (x^{-1}y)^{-1}v^{-1}u \in \pi_G(E' \circ E)$, verifying the claim.
If $A=\pi_G(E)$ then $A^{-1} = \pi_G(E^{-1})$ and so property (d) holds.

By its definition,
\[
\cE_{\cF(\cE)} = \{ P \subset G \times G ~{\big |}~  \text{there exists } E \in \cE \text{ such that }  P \subset G(\pi_G(E) \times \pi_G(E)) \}.
\]
Observe that $\{1\} \in \cF(\cE)$ because $\pi_G(\Delta_G) = \{1\}$.
For any $E \in \cE$ and $(x,y) \in E$ we have
$(x,y) = y(y^{-1}x, 1) \in G(\pi_G(E) \times \{1\}) \in  \cE_{\cF(\cE)}$ which shows that $\cE \subset \cE_{\cF(\cE)}$.

Let $E \in \cE$ be $G$-invariant.
We claim that $G(\pi_G(E) \times \pi_G(E))  \subset E \circ E^{-1}$.   
Since $E \circ E^{-1} \in \cE$ and any entourage in $\cE$ is contained in a $G$-invariant entourage,
this would imply that $\cE_{\cF(\cE)}  \subset  \cE$.
Let $(a,b) \in \pi_G(E) \times \pi_G(E)$.
Then $a = y^{-1}x$ and $b=v^{-1}u$ where $(x,y), ~(u,v) \in E$.
We have that $(y^{-1}x, 1) = y^{-1}(x,y) \in E$ and  $(v^{-1}u, 1) = v^{-1}(u,v) \in E$.
Since $(1, v^{-1}u) \in E^{-1}$, it follows that $(a,b) = (y^{-1}x, v^{-1}u) \in E \circ E^{-1}$.
Hence \linebreak
$\pi_G(E) \times \pi_G(E) \subset E \circ E^{-1}$ which
verifies the claim since $E \circ E^{-1}$ is $G$-invariant.
\end{proof}

\begin{definition}
Let $\cF$ be a generating family on a group $G$.  Define the {\it completion of $\cF$},  denoted by $\widehat{\cF}$, to be the collection of subsets of $G$ given by
\[
\widehat{\cF} = \{ A \subset G ~{\big |}~   \text{there exists } B \in \cF \text{ such that } A \subset B \}.
\]
\end{definition}

It is clear that the completion of a generating family is a generating family.

\begin{proposition}
\label{prop_genfamtwo}
Let $\cF$ be a generating family on a group $G$. 
Then
$\widehat{\cF} = \cF(\cE_{\cF})$ and $\cE_{\cF} = \cE_{\widehat{\cF}}$.
\end{proposition}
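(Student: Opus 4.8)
The plan is to prove the two stated equalities, $\widehat{\cF} = \cF(\cE_{\cF})$ and $\cE_{\cF} = \cE_{\widehat{\cF}}$, by direct double inclusions, leaning on the explicit descriptions of $\cE_{\cF}$ and of the operator $\cF(\cdot)$ given in Propositions \ref{assoc_coarse} and \ref{every_coarse}. For the first equality, recall $\cF(\cE_{\cF}) = \{\pi_G(E) \mid E \in \cE_{\cF}\}$, where $\pi_G(x,y) = y^{-1}x$. To see $\cF(\cE_{\cF}) \subset \widehat{\cF}$, take $E \in \cE_{\cF}$; by definition there is $A \in \cF$ with $E \subset G(A \times A)$, so $\pi_G(E) \subset \pi_G\big(G(A\times A)\big)$, and the key computation is that $\pi_G\big(G(A \times A)\big) = A^{-1}A$ — indeed $\pi_G(ga, ga') = (ga')^{-1}(ga) = a'^{-1}a$. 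Since $A^{-1}A \in \cF$ by properties (d) and (c) of Definition \ref{generating_family}, we get $\pi_G(E) \in \widehat{\cF}$. For the reverse inclusion, given $A \subset B$ with $B \in \cF$, note $G(B \times \{1\}) \in \cE_{\cF}$ (as observed in the proof of Proposition \ref{assoc_coarse}, since $B \cup \{1\} \in \cF$ by (a),(b) — here one uses that $\cF$ is nonempty, hence contains a set whose relevant products land back in $\cF$; alternatively use that $\{1\} \in \widehat{\cF}$ is not needed, only that $G(B\times\{1\}) \subset G((B\cup\{1\})\times(B\cup\{1\}))$), and $\pi_G\big(G(B \times \{1\})\big) = B$, so $B \in \cF(\cE_{\cF})$; since $\cF(\cE_{\cF})$ is a generating family it is closed downward? — it is not, so instead observe $\pi_G\big(\pi_G^{-1}(A) \cap G(B\times\{1\})\big) = A$ exhibits $A$ directly as $\pi_G(E')$ for an entourage $E' \subset G(B\times\{1\}) \in \cE_{\cF}$.

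For the second equality, I would argue $\cE_{\cF} \subset \cE_{\widehat{\cF}}$ trivially since $\cF \subset \widehat{\cF}$ (every witness $A \in \cF$ is also in $\widehat{\cF}$). Conversely, if $E \in \cE_{\widehat{\cF}}$ then $E \subset G(A \times A)$ for some $A \in \widehat{\cF}$, hence $A \subset B$ for some $B \in \cF$, so $E \subset G(A\times A) \subset G(B \times B)$ with $B \in \cF$, giving $E \in \cE_{\cF}$. This direction is immediate. A cleaner alternative for the whole proposition: combine the already-proved identity $\cE = \cE_{\cF(\cE)}$ from Proposition \ref{every_coarse} applied to the compatible coarse structure $\cE = \cE_{\cF}$, which yields $\cE_{\cF} = \cE_{\cF(\cE_{\cF})} = \cE_{\widehat{\cF}}$ once we know $\cF(\cE_{\cF}) = \widehat{\cF}$; thus the second equality follows formally from the first together with Proposition \ref{every_coarse}.

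The only real content is the first equality, and within it the identity $\pi_G\big(G(A\times A)\big) = A^{-1}A$ together with the bookkeeping that $A^{-1}A \in \cF$. I do not anticipate a genuine obstacle — everything reduces to tracking the shear map through $G$-orbits — but the step requiring the most care is producing, for a given $A \subset B \in \cF$, an honest entourage $E' \in \cE_{\cF}$ with $\pi_G(E') = A$ exactly (not merely $\pi_G(E') \supset A$), which is handled by intersecting with $\pi_G^{-1}(A)$ as in the proof of Proposition \ref{every_coarse}. One should also double-check the edge case where $\cF$ contains the empty set, but since $\cF$ is required to contain a nonempty member and completions only enlarge, the argument is unaffected.
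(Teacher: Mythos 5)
Your proposal takes essentially the same route as the paper: both directions of $\widehat{\cF} = \cF(\cE_{\cF})$ rest on the identity $\pi_G\bigl(G(A\times A)\bigr) = A^{-1}A$ (with $A^{-1}A \in \cF$ by (c),(d)) together with exhibiting each $A \in \widehat{\cF}$ as $\pi_G$ of an explicit entourage, and the second equality follows either by your direct two-line argument or, as in the paper, formally from the first equality via Proposition \ref{every_coarse}. The one place your justification goes wrong is the claim that $B \cup \{1\} \in \cF$ ``by (a),(b)'': a generating family need not contain $\{1\}$ or any singleton, and (a),(b) only give closure under finite unions of \emph{members} of $\cF$, so they do not put $B \cup \{1\}$ in $\cF$. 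Your parenthetical senses the problem but never resolves it. The repair is exactly the paper's: $B \cup B^{-1}B \in \cF$ by (b),(c),(d), and it contains $1$ because $B \supset A$ may be taken non-empty; hence $A \times \{1\} \subset G\bigl((B \cup B^{-1}B) \times (B \cup B^{-1}B)\bigr)$ is already an entourage and $A = \pi_G(A \times \{1\}) \in \cF(\cE_{\cF})$ directly, with no need for the detour through $G(B \times \{1\})$ and the intersection with $\pi_G^{-1}(A)$ (though that device, borrowed from the proof of Proposition \ref{every_coarse}, also works once the entourage is properly justified). Everything else in the proposal is correct.
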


\begin{proof}
By its definition,
\[ 
 \cF(\cE_{\cF}) = \{ \pi_G(E) \subset G  ~{\big |}~  \text{ there exists }  B \in \cF \text{ such that } E \subset G(B \times B) \}.
 \]
Let $A \in \widehat{\cF}$ be non-empty.
There exists $B \in \cF$ such that $A \subset B$.
Note that $B \cup B^{-1}B \in \cF$.
We have,
$
A \times \{1\}  \subset B \times \{1\}  \subset G((B \cup B^{-1}B) \times (B \cup B^{-1}B))
$
and so  $A = \pi_G(A \times \{1\}  ) \in \cF(\cE_{\cF})$.
Hence $\widehat{\cF} \subset \cF(\cE_{\cF})$.
Let $A \in  \cF(\cE_{\cF})$.
Then there exists $B \in \cF$ and $E \subset G(B \times B)$ such that $A = \pi_G(E)$.
We have $A \subset \pi_G(G(B \times B)) = B^{-1}B \in \cF$ and so $A \in \widehat{\cF}$.
Hence $\cF(\cE_{\cF}) \subset \widehat{\cF}$.
We conclude that $\cF(\cE_{\cF}) = \widehat{\cF}$ and so by
Proposition \ref{every_coarse},
$\cE_{\cF} = \cE_{\cF(\cE_{\cF}) } = \cE_{\widehat{\cF}}$.
\end{proof}

\begin{corollary}
\label{same_coarse}
Let ${\cF}_1$  and ${\cF}_2$ be a generating families on a group $G$.  
Then $\cE_{{\cF}_1}  = \cE_{{\cF}_2}$  if and only if $\widehat{\cF}_1 = \widehat{\cF}_2$. \qed
\end{corollary}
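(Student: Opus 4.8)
The plan is to deduce Corollary~\ref{same_coarse} directly from Proposition~\ref{prop_genfamtwo}, which supplies, for any generating family $\cF$ on $G$, the two identities $\widehat{\cF} = \cF(\cE_{\cF})$ and $\cE_{\cF} = \cE_{\widehat{\cF}}$. No new construction is required; the argument is just a matter of applying the two assignments $\cF \mapsto \cE_{\cF}$ and $\cE \mapsto \cF(\cE)$ in the correct order.

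For the forward implication, I would assume $\cE_{\cF_1} = \cE_{\cF_2}$ and apply the operation $\cE \mapsto \cF(\cE)$ to both sides, obtaining $\cF(\cE_{\cF_1}) = \cF(\cE_{\cF_2})$. By the first identity of Proposition~\ref{prop_genfamtwo} the left-hand side equals $\widehat{\cF}_1$ and the right-hand side equals $\widehat{\cF}_2$, so $\widehat{\cF}_1 = \widehat{\cF}_2$.

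For the reverse implication, I would assume $\widehat{\cF}_1 = \widehat{\cF}_2$, which immediately gives $\cE_{\widehat{\cF}_1} = \cE_{\widehat{\cF}_2}$. Then the second identity of Proposition~\ref{prop_genfamtwo} yields $\cE_{\cF_1} = \cE_{\widehat{\cF}_1} = \cE_{\widehat{\cF}_2} = \cE_{\cF_2}$, completing the argument.

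There is essentially no obstacle: the statement is a purely formal consequence of Proposition~\ref{prop_genfamtwo}, which is why it is recorded above without a separate proof. The only point that warrants care is that it is the \emph{completions} $\widehat{\cF}_i$, and not the families $\cF_i$ themselves, that are determined by the associated coarse structures; indeed $\cF_1 = \cF_2$ is strictly stronger than $\cE_{\cF_1} = \cE_{\cF_2}$, and Corollary~\ref{same_coarse} pins down exactly which data is recoverable from $\cE_{\cF}$.
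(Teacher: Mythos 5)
Your proof is correct and is exactly the argument the paper intends: the corollary is stated with an immediate \qed because it follows formally from Proposition~\ref{prop_genfamtwo} via the two identities $\widehat{\cF}=\cF(\cE_{\cF})$ and $\cE_{\cF}=\cE_{\widehat{\cF}}$, applied in precisely the way you describe. Your closing remark about the completions (rather than the families themselves) being the recoverable data is a correct and worthwhile observation.
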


We give some examples of generating families and their associated coarse structures.

\begin{example}[Pseudo-norms on groups]
\label{EX_pseudo_norm}
A {\it pseudo-norm} on a group $G$ is a non-negative function  $| \cdot | \colon G ~\ra~ \R$
such that:
\begin{enumerate}
\item $| 1 | = 0$,

\item For all $x \in G$, $|x^{-1} | = | x|$,

\item  For all $x, y \in G$,  $| x y| \leq |x| + |y|$.
\end{enumerate}
A pseudo-norm on $G$ determines a left invariant pseudo-metric $d$ on $G$ given by  $d(x,y) = | y^{-1} x|$.
Conversely, any left invariant pseudo-metric $d$ on $G$ yields a pseudo-norm given by $|x | = d(x,1)$.
For a non-negative real number $r$, 
let $B(r) =\{ x \in G ~{\big |}~  |x| \leq r\}$,  the {\it closed ball of radius $r$ centered at $1 \in G$}.
Define
\[
{\cF}_d = \{ A \subset G ~{\big |}~  \text{ there exists }  r>0 \text{ such that } A \subset B(r)\}.
\]
Thus ${\cF}_d$ consists  of those subsets of $G$ which are bounded with respect to the pseudo-norm.
Since 
$B(r) \cup B(s) = B(\max\{r,s\})$,
$B(r)^{-1} = B(r)$  and $B(r) B(s) \subset B(r+s)$,  it follows that ${\cF}_d$ is a generating family on $G$.
Note that $\widehat{{\cF}_d} =  {\cF}_d$.
The  coarse structure $\cE_{{\cF}_d}$ (henceforth abbreviated as $\cE_{d}$) is called the {\it bounded coarse structure associated to the pseudo-metric $d$} and
\[
\cE_{d} = \{  E \subset G \times G ~{\big |}~    \sup\{d(x,y) ~{\big |}~  (x,y) \in E\}  <  \infty \}.
\]

\end{example}

\begin{example}[The group-compact coarse structure]
\label{EX_group_compact}
Let $G$ be a Hausdorff topological group  and let $\cC(G)$ be the collection of all compact subsets
of  $G$.   If  $K$ and $K'$ are compact subsets of $G$ then  $K \cup K'$ is compact and
the continuity of the group operations implies that $K^{-1}$ and $K K'$ are compact.
It follows that $\cC(G)$ is a generating family on $G$ and
\[
\cE_{\cC(G)} = \{  E \subset G \times G ~{\big |}~    \text{ there exists a compact subset }  K \subset G \text{ such that }  E \subset G(K  \times K) \}.
\]
We call this coarse structure the {\it group-compact coarse structure} on $G$.
\end{example}

\begin{remark}[Generalizations of the group-compact coarse structure]   \label{group_compact_generalized}  
$ $  %WARNING TeX hack

\noindent(1)
Let $G$ be a  topological group which is  not necessarily Hausdorff.
The collection of all quasi-compact subsets of $G$ is a generating family on $G$ (recall that $A \subset G$ is quasi-compact if every open cover of $A$  has a finite subcover).

\smallskip

\noindent(2)
A less restrictive notion of a Hausdorff topological group is obtained replacing the requirement that the group multiplication $\mu \colon G \times G  ~\ra~ G$ is continuous, where $G \times G$ has the product topology, 
with the condition that $\mu$ is continuous when the set $G \times G$ is given the weak topology determined by the collection of compact subsets of the space $G \times G$. 
For the purpose of this discussion, we say that $G$ is a {\it weak Hausdorff topological group}.
A natural example of a weak Hausdorff topological group is the geometric realization
of a simplicial group. The collection $\cC(G)$ of compact subsets of  a weak Hausdorff topological group $G$ is a generating family on $G$.
If $G_{\rm k}$ is the weak Hausdorff topological  group obtained by re-topologizing $G$ with the weak topology determined by its collection of compact subsets 
then $\cC(G)= \cC(G_{\rm k})$ so the corresponding group-compact coarse structures on the underlying abstract group are the same.

\smallskip

\noindent(3)  Let $G$ be a Hausdorff topological group and let $X$ be a Hausdorff space equipped with a continuous left action of $G$.
Assume that $X = GC$ for some compact $C \subset X$ and that the $G$-action is {\it proper},  that is, the map $A \colon G \times X ~\ra~X \times X$ given by $A(g,x) = (x, gx)$  is  a proper map 
(recall that a continuous map between Hausdorff spaces is proper if it is a closed map and the fibers are compact).

The {\it group-compact coarse structure} on $X$ is the coarse structure:
\[
\cE_{\text{$G$-cpt}}=\{ E \subset X \times X~{\big |}~  \text{there exists  a compact } K \subset X  \text{ such that }  E \subset G(K \times K) \}.
\]
When $X=G$ with the left translation action of $G$ on itself,  this construction recovers the coarse structure $\cE_{\cC(G)}$ on $G$.
Another case of interest in this paper is the homogeneous space $X = G/K$ where is $K$ is a compact subgroup of $G$
(see Remark \ref{not_necessarily_normal}   and Example \ref{Lie_groups}).
\end{remark}

\begin{example}[Subsets of restricted cardinality]
\label{EX_restricted_card}
Let $G$ be a group and $\kappa$ an infinite cardinal number.   
Let $\cF_\kappa$ be the collection of all subsets of $G$ of cardinality strictly less than that of $\kappa$.
Then $\cF_\kappa$ is a generating family on $G$.
\end{example}

\begin{example}[The group-finite coarse structure]
\label{EX_finite_subsets}
Let $G$ be a group and let $\operatorname{fin}(G)$ be the collection of all finite subsets of $G$.  
Then $\operatorname{fin}(G)$ is a generating family on $G$,  indeed it is a special case of each of the three
preceding examples.
If $G$ is given the discrete topology then  $\operatorname{fin}(G) = \cC(G)$ since the compact subsets of $G$ are precisely the finite subsets.
If $\kappa$ is the first infinite cardinal then  $\operatorname{fin}(G) = \cF_\kappa$.
In the case $G$ is countable, if $d$ is a weighted word metric associated to some (possibly infinite) set of generators of $G$  as in
\cite[Proposition 1.3]{Dranish_and_Smith}
then  $\operatorname{fin}(G) = {\cF}_d$
(see \cite[Remark 2]{Dranish_and_Smith}).

We call the coarse structure $\cE_{\operatorname{fin}(G)}$ the {\it group-finite coarse structure} on $G$.
\end{example}

\begin{example}[Topologically bounded sets]
\label{EX_bounded}
Let $G$ be a topological group.
A subset $B$ of $G$ is said to be {\it topologically bounded} if for every neighborhood $V$ of $1 \in G$ there exists a positive integer $n$ (depending on $V$) such that 
$B \subset V^n = V \cdots V$ ($n$ factors).
The collection $\cF_{\rm tbd}$ of all topologically bounded subsets of $G$ is easily seen to be a generating family on $G$.  
If $d$ is a left invariant  pseudo-metric $d$ inducing the topology of $G$ then any topologically bounded set is contained in a $d$-ball centered at $1$ and so $\cF_{\rm tbd} \subset \cF_d$; however, 
the inclusion  $\cF_d \subset \cF_{\rm tbd}$ is not, in general, valid without additional assumptions on $d$.
\end{example}

A compatible coarse structure on a group determines compatible coarse structures on its subgroups and quotient groups.

\begin{proposition}[Subgroups and quotient groups]
\label{sub_and_quotient}
Let $G$ be a group and $\cF$ a generating family on $G$.
\begin{itemize}

\item[(i)]  Let $H  \subset  G$ be a subgroup.  Assume that there exists $A \in \cF$ such that $A \cap H$ is non-empty.
Then  the collection,  $\cF|_H$,  of subsets of $H$ given by
$
\cF|_H = \{ A \cap H ~\big|~ A \in \cF \}
$
is a generating family on $H$.

\item[(ii)]  Let  $\phi \colon G ~\ra~ Q$ be a homomorphism.
Then  the collection,  $\phi( \cF)$,  of subsets of $Q$ given by
$
\phi( \cF) = \{ \phi(A) ~\big|~ A \in \cF \}
$
is a generating family on $Q$.

\end{itemize}
\end{proposition}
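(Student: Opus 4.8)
The plan is to verify the four conditions of Definition \ref{generating_family} for each of the two families, disposing of the formal closure properties first and then concentrating on the product axiom (c), which is where all the content lies.

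Part (ii) should go through cleanly, because a homomorphism is compatible with all three operations on subsets. Condition (a) holds since $\phi$ sends a non-empty member of $\cF$ to a non-empty member of $\phi(\cF)$. Condition (b) follows from $\phi(A)\cup\phi(B)=\phi(A\cup B)$, condition (d) from $\phi(A)^{-1}=\phi(A^{-1})$, and condition (c) from the \emph{exact} identity $\phi(A)\phi(B)=\phi(AB)$, which holds precisely because $\phi$ is multiplicative; in each case the relevant set $A\cup B$, $A^{-1}$, or $AB$ lies in $\cF$ by the corresponding condition for $\cF$. Since no inclusions are lost anywhere, $\phi(\cF)$ is a generating family on $Q$.

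For part (i) conditions (a), (b), and (d) are equally formal. Condition (a) is precisely the standing hypothesis that some $A\cap H$ is non-empty. Condition (b) comes from the identity $(A\cap H)\cup(B\cap H)=(A\cup B)\cap H$ with $A\cup B\in\cF$, and condition (d) from $(A\cap H)^{-1}=A^{-1}\cap H$ with $A^{-1}\in\cF$, where I use that $H$ is a subgroup, so that $x^{-1}\in H\Leftrightarrow x\in H$ and hence $H^{-1}=H$.

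The delicate point, and the step I expect to be the main obstacle, is condition (c) for $\cF|_H$. Here the behaviour is genuinely different from part (ii): intersection with $H$ does not commute with multiplication, and one has only the inclusion $(A\cap H)(B\cap H)\subseteq(AB)\cap H$, typically \emph{strict}, because an element $ab\in H$ with $a\in A$ and $b\in B$ need not factor with both factors lying in $H$. Thus the obvious member $(AB)\cap H$ of $\cF|_H$ is too large to serve as the required witness. My plan is instead to use the product set itself: writing $W=(A\cap H)(B\cap H)$, one has $W\subseteq H$, so $W=W\cap H$, together with $W\subseteq AB\in\cF$ by condition (c) for $\cF$. It then suffices to exhibit $W$ as $C\cap H$ for some $C\in\cF$, and the natural choice is $C=W$, which works as soon as one knows $W\in\cF$. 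Deducing this membership from the containment $W\subseteq AB\in\cF$ is exactly the crux: it is immediate whenever $\cF$ is closed under passing to subsets of its members (as for the bounded, restricted-cardinality, group-finite, and topologically bounded families), and it is this structural feature of $\cF$ that must be pinned down in order to complete the verification of axiom (c) and hence finish part (i).
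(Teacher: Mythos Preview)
The paper omits the proof of this proposition entirely (``We omit the straightforward proof''), so there is no argument to compare against. Your treatment of part~(ii) and of axioms (a), (b), (d) in part~(i) is correct and complete.

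Your instinct about axiom~(c) for $\cF|_H$ is well-founded: this is a genuine gap, and the fix you propose does not apply to an arbitrary generating family. Closure under subsets is \emph{not} among the axioms of Definition~\ref{generating_family}, so from $W=(A\cap H)(B\cap H)\subseteq AB\in\cF$ one cannot conclude $W\in\cF$; and absent that, there is no evident mechanism for exhibiting $W$ as $C\cap H$ with $C\in\cF$. (For a concrete illustration of the difficulty, take $G$ free on $a,b$, let $H$ be the kernel of the homomorphism $a\mapsto 1$, $b\mapsto 0$, and let $\cF$ be the smallest generating family containing $S=\{1,a,b,a^{-1}b\}$: then $(S\cap H)^2=\{1,b,b^2\}$ while $S^2\cap H=\{1,b,b^2,a^{-1}ba\}$, and it is not at all clear that any $C\in\cF$ has $C\cap H=\{1,b,b^2\}$.) The paper's ``straightforward'' appears to gloss over this point. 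Fortunately, nothing downstream is affected: every application of $\cF|_H$ in the paper is through its associated coarse structure, and by Proposition~\ref{prop_genfamtwo} one may replace $\cF$ by its completion $\widehat{\cF}$ without changing that structure. Since $\widehat{\cF}$ \emph{is} closed under subsets, your argument shows that $\widehat{\cF}|_H$ (which coincides with $\widehat{\cF|_H}$) is a generating family on $H$, and this is what is actually needed throughout the paper.
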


We omit the straightforward proof.

\begin{proposition}[Enlargement by a  normal subgroup]
\label{normal_subgroups}
Let $G$ be a group,  $N \lhd  G$ a normal subgroup and $\cF$ a generating family on $G$.
Define $N \cF$ to be the collection of subsets of $G$ given by
$
N \cF = \{ NA ~\big|~ A \in \cF \}.
$
Then $N \cF$ is a generating family on $G$. 
\end{proposition}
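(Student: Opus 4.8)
The plan is to verify the four axioms (a)--(d) of Definition \ref{generating_family} directly for the family $N\cF$. The one observation that makes everything routine is that normality of $N$ gives $AN = NA$ for \emph{every} subset $A \subset G$: since $aN = Na$ for each $a \in A$ (normality), we have $AN = \bigcup_{a\in A} aN = \bigcup_{a\in A} Na = NA$. Using this together with $NN = N$ and $N^{-1} = N$ (both because $N$ is a subgroup), all of the required closure properties reduce to the corresponding properties of $\cF$.

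Concretely: for (a), choose a non-empty $A \in \cF$ (which exists by (a) for $\cF$); then $NA \supseteq A$ is non-empty. For (b), the identity $NA \cup NB = N(A\cup B)$ together with $A \cup B \in \cF$ (property (b) for $\cF$) shows $N\cF$ is closed under binary, hence finite, unions. For (c), given $A, B \in \cF$, associativity of the product of subsets of $G$ and the identity $AN = NA$ give
\[
(NA)(NB) = N(AN)B = N(NA)B = (NN)(AB) = N(AB),
\]
and $AB \in \cF$ by property (c) for $\cF$, so $(NA)(NB) \in N\cF$. For (d), $(NA)^{-1} = A^{-1}N^{-1} = A^{-1}N = NA^{-1}$, and $A^{-1} \in \cF$ by property (d) for $\cF$, so $(NA)^{-1} \in N\cF$.

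I do not expect a genuine obstacle here; the statement is essentially a bookkeeping exercise. The only place that warrants a moment's care is the product computation in (c): one must make sure each regrouping is justified by associativity of the set product and by $AN = NA$, rather than by some unavailable cancellation on individual elements (alternatively, one can verify $(NA)(NB) = N(AB)$ by the two inclusions, writing $n_1 a n_2 b = (n_1\,a n_2 a^{-1})(ab)$ and using $a n_2 a^{-1} \in N$). Once (c) is in hand, the remaining axioms are immediate.
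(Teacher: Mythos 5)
Your proof is correct and follows essentially the same route as the paper: verify the four axioms directly, using $NX = XN$ (from normality), $NN = N$, and $N^{-1} = N$ to reduce each closure property of $N\cF$ to the corresponding one for $\cF$. The computations $(NA)(NB) = N(AB)$ and $(NA)^{-1} = NA^{-1}$ appear verbatim in the paper's argument.
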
 

\begin{proof}
If $A \in \cF$ is nonempty then so is $NA \in N\cF$.
For $A, B \in \cF$ we have  $NA \cup NB = N(A \cup B) \in N\cF$ because $A \cup B \in \cF$.
Since $N$ is a normal subgroup of $G$, for any $X \subset G$ we have $NX = XN$.
Hence for  $A, B \in \cF$ we have $(NA)(NB) = (NN)(AB) = N(AB) \in N\cF$ because $AB \in \cF$.
Also, $(NA)^{-1} = A^{-1} N^{-1} = A^{-1} N = N A^{-1} \in N\cF$ because \linebreak
$A^{-1} \in \cF$. 
\end{proof}

\begin{theorem}
\label{group-compact_equals_metric}
Let $G$ be a Hausdorff topological group.  Denote its topology by $\tau$.  Let $d$ be a left invariant pseudo-metric on $G$ (not necessarily inducing the topology $\tau$).
Then the group-compact coarse structure on $G$ (arising from the topology $\tau$) coincides
with the bounded coarse structure associated to $d$
if and only if:
\begin{itemize}

\item[(i)]  every relatively compact subset of $G$ (with respect to $\tau$) is $d$-bounded,

\item[(ii)]  every $d$-bounded subset of $G$ is relatively compact (with respect to $\tau$).

\end{itemize}
\end{theorem}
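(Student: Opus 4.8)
The plan is to prove both implications, recalling from Example \ref{EX_pseudo_norm} that $\cE_d = \{E \subset G\times G \mid \sup\{d(x,y) \mid (x,y)\in E\} < \infty\}$ and from Example \ref{EX_group_compact} that the group-compact coarse structure is $\cE_{\cC(G)}$, with $\widehat{\cC(G)}$ being the collection of relatively compact subsets of $G$. By Corollary \ref{same_coarse}, $\cE_{\cC(G)} = \cE_d$ if and only if $\widehat{\cC(G)} = \widehat{\cF_d} = \cF_d$, so the whole theorem reduces to the set-level statement that the relatively compact subsets of $(G,\tau)$ are exactly the $d$-bounded subsets of $G$. Condition (i) is precisely the inclusion $\widehat{\cC(G)} \subset \cF_d$ and condition (ii) is precisely $\cF_d \subset \widehat{\cC(G)}$, so once the reduction via Corollary \ref{same_coarse} is in place the equivalence is essentially immediate.

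In more detail, for the forward direction I would assume $\cE_{\cC(G)} = \cE_d$. Given a relatively compact $K \subset G$, its closure $\overline{K}$ is compact, so $\overline{K}\times\{1\} \subset G(\overline{K}\times\overline{K})$ shows $\overline{K}\times\{1\} \in \cE_{\cC(G)} = \cE_d$, whence $\sup\{d(x,1) \mid x\in\overline{K}\} < \infty$, i.e.\ $K$ is $d$-bounded; this is (i). Conversely, if $B$ is $d$-bounded, pick $r$ with $B \subset B(r)$; then $B(r)\times\{1\} \in \cE_d = \cE_{\cC(G)}$, so there is a compact $K$ with $B(r)\times\{1\} \subset G(K\times K)$. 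Projecting via the shear map $\pi_G$ gives $B(r) = \pi_G(B(r)\times\{1\}) \subset \pi_G(G(K\times K)) = K^{-1}K$, which is compact; hence $B \subset B(r)$ is relatively compact, giving (ii).

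For the reverse direction I would assume (i) and (ii) hold. Condition (i) says every compact (hence relatively compact) set is $d$-bounded, so $\cC(G) \subset \cF_d$, and therefore $\widehat{\cC(G)} \subset \widehat{\cF_d} = \cF_d$. Condition (ii) says every $d$-ball $B(r)$, being $d$-bounded, is relatively compact, so $B(r) \in \widehat{\cC(G)}$; since every $d$-bounded set sits inside some $B(r)$ and $\widehat{\cC(G)}$ is closed under passing to subsets, $\cF_d \subset \widehat{\cC(G)}$. Thus $\widehat{\cC(G)} = \cF_d = \widehat{\cF_d}$, and Corollary \ref{same_coarse} yields $\cE_{\cC(G)} = \cE_d$.

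I do not anticipate a serious obstacle here: the content is entirely in correctly translating "coincidence of the two coarse structures" into "coincidence of the two completed generating families" via Corollary \ref{same_coarse}, and then observing that the completions in question are exactly the relatively compact sets and the $d$-bounded sets respectively. The one point requiring a little care is the forward direction's use of the shear map to extract a compact set from the containment $B(r)\times\{1\} \subset G(K\times K)$; I would state explicitly that $\pi_G(G(K\times K)) = K^{-1}K$ and that $K^{-1}K$ is compact by continuity of the group operations (as already noted in Example \ref{EX_group_compact}). Everything else is bookkeeping with the definitions of $\cF_d$, $\widehat{\cF}$, and $\cE_\cF$.
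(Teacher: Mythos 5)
Your proof is correct and takes essentially the same approach as the paper: the paper's entire proof consists of observing that conditions (i) and (ii) amount to $\widehat{{\cF}_d} = \widehat{\cC(G)}$ and then invoking Corollary \ref{same_coarse}, which is precisely your opening reduction. (One tiny quibble in your redundant ``in more detail'' paragraph: the containment $\overline{K}\times\{1\}\subset G(\overline{K}\times\overline{K})$ is not literally true in general --- one should instead use the compact set $\overline{K}\cup\{1\}$ --- but this does not affect the validity of the argument, which is already complete after the first paragraph.)
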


\begin{proof}
Conditions (i) and (ii) are equivalent to $\widehat{{\cF}_d} =   \widehat{\cC(G)}$ and so the conclusion follows from
Corollary \ref{same_coarse}.
\end{proof}

Let $G$ be a group and $\Sigma \subset G$ a (not necessarily finite) set of generators.
The {\it word length norm} associated to $\Sigma$, denoted by $| x |_\Sigma$ for $x \in G$, is defined by
\[
| x |_\Sigma =  \inf\{  n ~{\big |}~   x = a_1 \cdots a_n, \text{ where }  a_i \in \Sigma \cup \Sigma^{-1} \}.
\]
We denote the associated word length metric by $d_\Sigma$.

\begin{proposition}
\label{locally_compact_groups}
Let $G$ be a locally compact group with a set of generators $\Sigma \subset G$ such that $\Sigma \cup \{1\} \cup \Sigma^{-1}$ is compact.
Then the group-compact coarse structure on $G$ coincides with the bounded coarse structure associated to $d_\Sigma$.
\end{proposition}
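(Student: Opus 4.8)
The plan is to apply Theorem~\ref{group-compact_equals_metric}, so the task reduces to verifying its two conditions (i) and (ii) for the left invariant word metric $d = d_\Sigma$ on the locally compact group $G$. Write $S = \Sigma \cup \{1\} \cup \Sigma^{-1}$, a compact symmetric generating set containing the identity, and note that the closed ball of radius $n$ for $d_\Sigma$ centered at $1$ is exactly $B(n) = S^n = S \cdots S$ ($n$ factors), which is compact (hence relatively compact) by continuity of multiplication. This immediately gives condition (ii): every $d_\Sigma$-bounded set lies in some $B(n) = S^n$, which is compact.

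For condition (i) I must show that every relatively compact subset $K \subset G$ is $d_\Sigma$-bounded, i.e.\ $K \subset S^n$ for some $n$. Here is where local compactness enters. First I would produce an open neighborhood of $1$ with compact closure inside some $S^n$: since $S$ is a compact neighborhood-candidate, consider the interior of $S^2$ (or more carefully, use that $G$ is generated by $S$ so $G = \bigcup_{n\ge 1} S^n$, and by local compactness and the Baire category theorem — $G$ being locally compact Hausdorff, hence a Baire space — some $S^n$ has nonempty interior; then $U := S^n \cdot (S^n)^{-1} = S^{2n}$ contains an open neighborhood $V$ of $1$). Actually the cleaner route: let $U$ be an open neighborhood of $1$ with compact closure $\overline{U}$; since $\overline U$ is compact and $G = \bigcup_m S^m$ with $S \subset S^2 \subset \cdots$ an increasing cover of $G$ by sets whose union is open-exhausting... more precisely, cover $\overline U$ by the sets $\mathrm{int}(S^m)$ — but I need these interiors to be nonempty and to cover, which is exactly the Baire argument. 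So: by Baire, $S^{n_0}$ has nonempty interior for some $n_0$, whence $V := S^{n_0} (S^{n_0})^{-1} = S^{2n_0}$ is a neighborhood of $1$.

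Now given a relatively compact $K$, the collection $\{ gV : g \in G\}$ is an open cover of $G$, so it covers the compact set $\overline{K}$; extract a finite subcover $\overline K \subset g_1 V \cup \cdots \cup g_r V$. Each $g_i \in S^{m_i}$ for some $m_i$ since $G = \bigcup_m S^m$, so $g_i V \subset S^{m_i} S^{2n_0} = S^{m_i + 2n_0}$. Taking $n = \max_i (m_i + 2n_0)$ gives $\overline K \subset S^n = B(n)$, so $K$ is $d_\Sigma$-bounded. This establishes (i), and together with (ii) and Theorem~\ref{group-compact_equals_metric} completes the proof.

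I expect the main obstacle to be condition (i): the passage from ``relatively compact'' to ``contained in a bounded ball'' genuinely requires an argument that some finite power $S^{n_0}$ has nonempty interior, and the natural tool is the Baire category theorem applied to the locally compact Hausdorff (hence Baire) space $G$, using that $G = \bigcup_{n} S^n$ is a countable union of closed sets. One should double-check the edge cases (e.g.\ $G$ compact, where everything is trivial, or $\Sigma$ empty so $G$ is trivial) and confirm that $S$ being compact makes each $S^n$ compact and thus closed in the Hausdorff group $G$, which is what licenses the Baire argument.
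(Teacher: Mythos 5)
Your proof is correct and follows the same structure as the paper's: both reduce the statement to verifying conditions (i) and (ii) of Theorem~\ref{group-compact_equals_metric}, and your verification of (ii) (the $d_\Sigma$-ball of radius $n$ is $(\Sigma\cup\{1\}\cup\Sigma^{-1})^n$, which is compact) is verbatim the paper's. The only difference is at condition (i), where the paper simply cites \cite[Lemma 3.2]{Abels} for the fact that every compact subset of $G$ has finite word length, whereas you supply the standard proof of that fact: by Baire category some $S^{n_0}$ has nonempty interior, so $S^{2n_0}$ is a neighborhood of $1$, and a finite subcover of $\overline{K}$ by translates of that neighborhood bounds the word length of $K$. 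This argument is sound (each $S^n$ is compact, hence closed in the Hausdorff group $G$, which licenses Baire), so your write-up is a self-contained version of the paper's proof rather than a different one.
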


\begin{proof}
By \cite[Lemma 3.2]{Abels},  every compact subset of $G$ has finite word length (with respect to the generating set $\Sigma$) so Condition (i)
of Theorem \ref{group-compact_equals_metric} holds.
The $d_\Sigma$-ball of non-negative integer radius $n$ is $(\Sigma \cup \{1\} \cup \Sigma^{-1})^n$, which is compact since $\Sigma \cup \{1\} \cup \Sigma^{-1}$ is assumed to be
compact,  hence Condition (ii) of Theorem \ref{group-compact_equals_metric} also holds.
\end{proof}

\begin{example}
%\label{}
The Lie group $\R^m$ is locally compact and $\Sigma = [-1,1]^m$ is a compact set of generators.  
Hence the group-compact coarse structure coincides with the bounded coarse structure associated to $d_\Sigma$.
Note that the Euclidean metric on $\R^m$  also satisfies Conditions (i) and (ii)
of Theorem \ref{group-compact_equals_metric}  as does any appropriate ``coarse path pseudo-metric'' 
(see \cite[Proposition 3.11]{Abels}).
\end{example}

A topological group which is not locally compact may fail to have a left invariant pseudo-metric such that the associated bounded coarse structure coincides
with the group-compact coarse structure.
We show that  this is the case for the additive group $\Zhalf$ of rational numbers whose denominators are powers of two, topologized as a subspace of $\R$
(and, as such, is not locally compact).

\begin{proposition}
\label{no_left_invariant}
There is no left invariant pseudo-metric on the topological group $\Zhalf$  such that the associated bounded coarse structure coincides with the group-compact
coarse structure.
\end{proposition}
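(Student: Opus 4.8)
The plan is to argue by contradiction using Theorem~\ref{group-compact_equals_metric}. Suppose $d$ is a left invariant pseudo-metric on $\Zhalf$ whose associated bounded coarse structure coincides with the group-compact coarse structure. Then conditions (i) and (ii) of Theorem~\ref{group-compact_equals_metric} hold: (i) every relatively compact subset of $\Zhalf$ (in the topology it inherits from $\R$) is $d$-bounded, and (ii) every $d$-bounded subset of $\Zhalf$ is relatively compact. I would show that (i) and (ii) together force $\Zhalf$ to be locally compact, which is false because $\Zhalf$ is a dense proper subgroup of $\R$.

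\emph{Local boundedness of the pseudo-norm.} Write $|x| := d(x,0)$. The first step is to prove that every $a \in \Zhalf$ has an open neighborhood $U$ in $\Zhalf$ with $\sup_{x \in U}|x| < \infty$. If this failed at some $a$, then for each integer $k \geq 1$ the pseudo-norm would be unbounded on $\Zhalf \cap (a - 1/k,\, a + 1/k)$, so one could choose $a_k$ in this set with $|a_k| > k$. The set $\{a_k : k \geq 1\} \cup \{a\}$ is a convergent sequence together with its limit, hence a compact --- in particular relatively compact --- subset of $\Zhalf$; by (i) it is $d$-bounded, so $\sup_k |a_k| \leq \sup_k d(a_k,a) + |a| < \infty$, a contradiction.

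\emph{From local boundedness to local compactness, and the contradiction.} Given $a$ and such a neighborhood $U$ with $|x| \leq M$ on $U$, the estimate $d(x,y) = |y^{-1}x| \leq |y| + |x| \leq 2M$ for $x,y \in U$ shows that $U$ is $d$-bounded, so by (ii) $U$ is relatively compact. Thus every point of $\Zhalf$ has a relatively compact neighborhood. Applying this at $0$ yields $\varepsilon > 0$ such that $\Zhalf \cap (-\varepsilon,\varepsilon)$ is relatively compact in $\Zhalf$; its closure in $\Zhalf$ is then a compact subspace of $\R$, hence closed in $\R$, and since $\Zhalf$ is dense in $\R$ this closure must equal $[-\varepsilon,\varepsilon]$. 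But $[-\varepsilon,\varepsilon] \not\subset \Zhalf$, a contradiction. Hence no such $d$ exists.

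The key point --- and the only place where any care is needed --- is that $d$ is \emph{not} assumed to induce the topology of $\Zhalf$, so $d$-balls need not be topological neighborhoods; the substance of the argument is exactly that conditions (i) and (ii) nevertheless produce a bounded, hence relatively compact, neighborhood of each point, i.e.\ local compactness. The background facts used (a convergent sequence together with its limit is compact; a subspace of $\R$ is compact if and only if it is closed and bounded in $\R$; $\Zhalf$ is dense in $\R$) are all routine.
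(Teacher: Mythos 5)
Your proof is correct, but it takes a genuinely different route from the paper's. Both arguments start the same way, reducing to conditions (i) and (ii) of Theorem~\ref{group-compact_equals_metric}, but they diverge immediately afterwards. The paper shows that (i) and (ii) would force $\cF_d$ to coincide with $\cF_{d_E}$ for the Euclidean metric $d_E$; the hard inclusion $\cF_{d_E}\subset\cF_d$ is obtained by a delicate diagonal argument with the compact sets $F_\phi=\bigcup_n 2^{-\phi(n)}F_n$ and a rescaling of $\phi$, exploiting the dyadic divisibility of $\Zhalf$, and the contradiction is that the Euclidean unit ball is not relatively compact. You instead show that (i) and (ii) force $\Zhalf$ to be locally compact: condition (i), applied to a convergent sequence together with its limit (a compact set), gives each point a neighborhood on which the pseudo-norm is bounded, and condition (ii) upgrades that neighborhood to a relatively compact one; the contradiction is that a compact closure inside $\Zhalf$ would have to contain $[-\varepsilon,\varepsilon]\not\subset\Zhalf$ by density. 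Your argument is shorter, avoids all the dyadic-specific combinatorics, and in fact proves the stronger general statement that a first countable Hausdorff topological group which is not locally compact admits no left-invariant pseudo-metric whose bounded coarse structure equals the group-compact one; the only price is that one must note (as you do) that $d$-balls need not be topological neighborhoods, so the sequence extraction is genuinely needed. The paper's longer computation buys the extra information that any such $d$ would have to be coarsely equivalent to the Euclidean metric, but that is not needed for the proposition itself. Two cosmetic points: in your first step, $d$-boundedness of $\{a_k\}\cup\{a\}$ already bounds $\sup_k|a_k|$ directly (the detour through $d(a_k,a)$ is unnecessary), and in the last step the closure need only \emph{contain} $[-\varepsilon,\varepsilon]$, not equal it, which is all the contradiction requires.
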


\begin{proof}
Let $d_E(x,y) = | x - y|$, the Euclidean absolute value of $x-y$.
Clearly, any compact subset of $\Zhalf$ has bounded Euclidean absolute value and so $\cC(G) \subset \cF_{d_E}$.
The ball $B_{d_E}(1) = \Zhalf \cap [-1,1]$ is not a relatively compact subset of $\Zhalf$ (for example, the sequence $x_n = (1 - 4^{-n})/3 \in B_{d_E}(1)$
has no convergent subsequence in $\Zhalf$).
Thus the bounded coarse structure on $\Zhalf$ associated to $d_E$ is strictly coarser than the group-compact coarse structure.
Suppose that $d$ is a left invariant pseudo-metric on $\Zhalf$ such that the associated coarse structure coincides with the group-compact coarse structure.
By Theorem \ref{group-compact_equals_metric},  $d$ must satisfy Conditions  (i) and (ii) of that proposition.
We will show that these conditions on $d$ imply that the bounded coarse structure associated to $d$ coincides with
the bounded coarse structure associated to $d_E$, a contradiction, thus proving that no such $d$ exists.

Assume that $d$ satisfies Conditions  (i) and (ii) of Theorem \ref{group-compact_equals_metric}.
By Condition (ii),   the $d$-ball $B_d(r) = \{  x \in \Zhalf ~{\big |}~   |x|_d = d(x,0) \leq r\}$ is relatively compact as a subset of $\Zhalf$ and hence also as a subset of $\R$.
It follows that $\sup\{ |x| ~{\big |}~  x \in B_d(r)\}$ is finite and so ${\cF}_d \subset {\cF}_{d_E}$.

Let $K = \{ \pm  2^{-k} ~{\big |}~   k=0, 1, \ldots \} \cup \{ 0\}$. 
Note that $K$ is a compact set of generators for $\Zhalf$. 
Let $\lambda K$, where $\lambda \in \R$,  denote the set $\{ \lambda x ~{\big |}~  x \in  K\}$.
For $n \geq 0$,  let
\[
F_n = K + 2^{-1}K + \cdots  + 2^{-n}K = \{ x \in \Zhalf ~{\big |}~   x= \sum_{i=0}^n  2^{-i}a_i,\text{ where }  a_i \in K \}.
\]
Since $K$ is compact so is each $F_n$.
Let $\phi\colon \bN ~\ra~  \bN$,  where $\bN$ is the set of non-negative integers, be any function such that $\lim_{n \ra \infty} \phi(n) = \infty$.
Let $F_\phi = \bigcup_{n \geq 0} 2^{-\phi(n)} F_n$.
Observe that $F_\phi \subset \Zhalf$ and that $F_\phi$ is compact because $0 \in 2^{-\phi(n)} F_n$ and the Euclidean diameters
of the compact sets $2^{-\phi(n)} F_n$ converge to $0$.  
By Condition (i),  $F_\phi$ is $d$-bounded, that is,
there exists $C_\phi > 0$ such that $|x|_d \leq C_\phi$ for all $x\in F_\phi$.
Let $x \in B_{d_E}(1)$.
Observe that $x \in F_n$ where  $n= |x|_K$ (recall that $|x|_K$ is the word length norm of $x$ with respect to the generating set $K$).
It follows that  $2^{-\phi(n)} x \in F_\phi$ and so $|2^{-\phi(n)} x |_d \leq  C_\phi$.  Hence
\begin{equation}
\label{d_versus_K}
|x|_d =  |  2^{\phi(n)} 2^{-\phi(n)} x|_d  \leq 2^{\phi(n)} |2^{-\phi(n)} x|_d  \leq 2^{\phi(n)} C_\phi ~=~ C_\phi ~2^{\phi( |x|_K)}.
\end{equation}
Suppose  $B_{d_E}(1)$ is not $d$-bounded.
Then there exists a  sequence $\{x_n \} \subset B_{d_E}(1) $ such that  $|x_n|_d ~\ra~ \infty$.  
Choosing $\phi$ to be the identity function in (\ref{d_versus_K}),  we see that $|x_n|_K ~\ra~ \infty$ and by passing to a subsequence
we may assume that $\{|x_n|_d \}$ and $\{|x_n|_K \}$ are strictly increasing.
For a real number $r$, let $r^+$ denote the smallest integer greater than or equal to $r$.  
Define $\phi\colon \bN ~\ra~  \bN$ on  $\{|x_n|_K \} \subset \bN$  by 
$\phi(|x_n|_K) = (\tfrac{1}{2} \log_2(|x_n|_d))^+$,
where $\log_2$ is the base two logarithm,
and extend it to all of $\bN$  
so that $\phi$ is non-decreasing.
For this $\phi$, (\ref{d_versus_K}) yields:
\[
|x_n|_d ~\leq~ C_\phi ~2^{\phi( |x_n|_K)}  ~\leq~ 2 \, C_\phi \, |x_n|_d^{1/2}.
\]
It follows that  $|x_n|_d$ is bounded, a contradiction.
Hence $B_{d_E}(1) $ is $d$-bounded and so
$C= \sup\{ |x|_d ~{\big |}~  x \in  B_{d_E}(1)\}$ is finite.
If $m$ is a positive integer and $x \in  B_{d_E}(m)$ then
\[
|x|_d  = | m(x/m) |_d \leq m |x/m|_d   \leq m C
\]
and so $B_{d_E}(m) \subset B_d(mC)$.   
It follows that    ${\cF}_{d_E}  \subset {\cF}_d$.
We have established that ${\cF}_{d_E} = {\cF}_d$ and so $d$ and $d_E$ give rise to the same coarse structure on $\Zhalf$.
\end{proof}

A coarse space $(X, \cE)$ is said to be {\it connected}  if every point of $X \times X$ is contained in some entourage.

\begin{proposition}
\label{prop_connected}
Let $G$ be a group and $\cF$ a generating family on $G$.   Then $(G, \cE_{\cF})$  is connected if and only if $G = \bigcup_{A \in \cF} A$.
\end{proposition}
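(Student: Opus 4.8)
The plan is to verify both implications directly from the definition of connectedness and the definition of $\cE_{\cF}$, using only the closure properties of a generating family (Definition \ref{generating_family}). Throughout, the one point to keep straight is that membership of a pair in $G(A\times A)$ means it lies in the \emph{$G$-orbit} of some element of $A\times A$ under the diagonal action, not merely in $A\times A$ itself; once that is set up, there is no real obstacle.

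For the ``only if'' direction, suppose $(G,\cE_{\cF})$ is connected and let $g\in G$ be arbitrary. Apply connectedness to the point $(g,1)\in G\times G$ to get an entourage $E\in\cE_{\cF}$ with $(g,1)\in E$; by the definition of $\cE_{\cF}$ there is $A\in\cF$ with $E\subset G(A\times A)$, hence $(g,1)\in G(A\times A)$. Thus $(g,1)=(ha,hb)$ for some $h\in G$ and $a,b\in A$, which forces $h=b^{-1}$ and $g=b^{-1}a\in A^{-1}A$. Since $\cF$ is closed under inverses and products (Definition \ref{generating_family}(c),(d)), $A^{-1}A\in\cF$, so $g\in\bigcup_{A\in\cF}A$. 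As $g$ was arbitrary and the reverse inclusion is trivial, $G=\bigcup_{A\in\cF}A$.

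For the ``if'' direction, suppose $G=\bigcup_{A\in\cF}A$ and let $(g,h)\in G\times G$ be arbitrary. Write $(g,h)=h(h^{-1}g,1)$. Using the hypothesis, choose $A_1\in\cF$ with $h^{-1}g\in A_1$ and $A_2\in\cF$ with $1\in A_2$, and set $A=A_1\cup A_2$, which lies in $\cF$ by Definition \ref{generating_family}(b). Then $(h^{-1}g,1)\in A\times A$, so $(g,h)=h(h^{-1}g,1)\in G(A\times A)$, whence the singleton $\{(g,h)\}$ is an entourage in $\cE_{\cF}$. Therefore every point of $G\times G$ lies in some entourage, i.e.\ $(G,\cE_{\cF})$ is connected. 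Each step is a routine unwinding of definitions, so I expect no substantive difficulty.
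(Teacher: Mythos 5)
Your proof is correct and follows essentially the same route as the paper: the forward direction extracts $g\in A^{-1}A\in\cF$ from $(g,1)\in G(A\times A)$ exactly as the paper does, and the converse differs only cosmetically (you translate $(g,h)$ to $(h^{-1}g,1)$ before exhibiting a containing entourage, while the paper places $(x,y)$ directly in $(A\cup B)\times(A\cup B)$).
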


\begin{proof}
Assume  that $(G, \cE_{\cF})$  is connected and let $g \in G$.   Then for some $B \in \cF$ we have $(g,1) \in G(B \times B)$.
It follows that $g \in B^{-1} B \in \cF$.   Hence $G = \bigcup_{A \in \cF} A$.

For the converse, assume $G = \bigcup_{A \in \cF} A$  and let $(x,y) \in G \times G$.
There exist $A, B \in \cF$ such that $x \in A$ and $y \in B$.   Let $C = A \cup B$.
Then $(x, y) \in G(C \times C) \in \cE_{\cF}$.
\end{proof}

\begin{corollary}
\label{cor_connected}
If the coarse space  $(G, \cE_{\cF})$  is connected  then for all $A \in \widehat{\cF}$  and all $g \in G$ we have
that $g A \in \widehat{\cF}$ and $ A g \in \widehat{\cF}$ 
\end{corollary}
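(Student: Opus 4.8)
The plan is to reduce the statement, via Proposition \ref{prop_connected}, to the observation that connectedness of $(G,\cE_{\cF})$ is exactly the assertion that every singleton is a ``small'' set, i.e.\ $\{g\}\in\widehat{\cF}$ for every $g\in G$. Indeed, by Proposition \ref{prop_connected}, $(G,\cE_{\cF})$ connected means $G=\bigcup_{A\in\cF}A$, so for any $g\in G$ there is some $B\in\cF$ with $g\in B$, hence $\{g\}\subset B$ and $\{g\}\in\widehat{\cF}$.

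Next I would fix $A\in\widehat{\cF}$ and $g\in G$. By definition of the completion, choose $C\in\cF$ with $A\subset C$, and by the previous paragraph choose $B\in\cF$ with $g\in B$. Now apply property (c) of a generating family (Definition \ref{generating_family}): since $B,C\in\cF$, both $BC$ and $CB$ lie in $\cF$. From $g\in B$ and $A\subset C$ we get $gA\subset BC$ and $Ag\subset CB$, so $gA$ and $Ag$ are each contained in an element of $\cF$, which means $gA\in\widehat{\cF}$ and $Ag\in\widehat{\cF}$. This completes the argument.

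There is essentially no obstacle here: the content is entirely in recognizing that ``connected'' supplies singletons as members of $\widehat{\cF}$, after which closure of $\cF$ under products does the rest. The only point that requires a moment's care is that one must pass through the characterization in Proposition \ref{prop_connected} rather than trying to use connectedness directly on pairs in $G\times G$; once that translation is made, the proof is a two-line set-containment computation using Definition \ref{generating_family}(c).
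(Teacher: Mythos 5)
Your proof is correct and follows essentially the same route as the paper: Proposition \ref{prop_connected} puts every singleton $\{g\}$ into $\widehat{\cF}$, and then closure under products gives $gA=\{g\}A\in\widehat{\cF}$ and $Ag=A\{g\}\in\widehat{\cF}$. The only cosmetic difference is that the paper invokes the remark that $\widehat{\cF}$ is itself a generating family, whereas you unfold the definition of the completion and apply Definition \ref{generating_family}(c) inside $\cF$ directly.
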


\begin{proof}
If $(G, \cE_{\cF})$  is connected then Proposition \ref{prop_connected} implies that $\{g \}  \in \widehat{\cF}$ and
so $gA = \{g \}  A \in \widehat{\cF}$ and $ Ag = A \{g \}   \in \widehat{\cF}$.
\end{proof}
It is straightforward to show, using Proposition \ref{prop_connected},  that
the coarse spaces in Examples
\ref{EX_pseudo_norm}, 
\ref{EX_group_compact} and
\ref{EX_restricted_card}
are connected.

For a set $X$ and $E \subset X \times X$ and $x \in X$,  
let $E(x)$ denote the set $\{y \in X ~\big|~ (y,x) \in E \}$.
A subset $B \subset X$ of a coarse space $(X, \cE)$ is said to be {\it bounded} if it is of the form $E(x)$ for some $E \in \cE$ and $x \in X$.

\begin{proposition}
\label{prop_bounded}
Let $G$ be a group and $\cF$ a generating family on $G$. 
A subset $B \subset G$ is bounded (with respect to the coarse structure $ \cE_{\cF}$) if and only if $B^{-1}B \in \widehat{\cF}$.
Every element of $\widehat{\cF}$ is bounded and 
if $(G, \cE_{\cF})$  is connected then  $\widehat{\cF}$ coincides with the collection of bounded subsets of $G$.
\end{proposition}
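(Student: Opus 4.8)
The plan is to prove the three assertions in order, working directly from the definitions of ``bounded subset'' and of $\cE_{\cF}$, and using the shear-map machinery from Propositions~\ref{assoc_coarse} and~\ref{every_coarse}. Throughout, recall that $B \subset G$ is bounded exactly when $B = E(x)$ for some $E \in \cE_{\cF}$ and $x \in G$, and that $E \in \cE_{\cF}$ means $E \subset G(A \times A)$ for some $A \in \cF$.

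First I would prove the characterization: $B$ is bounded if and only if $B^{-1}B \in \widehat{\cF}$. For the forward direction, suppose $B = E(x)$ with $E \subset G(A\times A)$, $A \in \cF$. If $B$ is empty the claim is trivial (taking any $A \in \cF$, noting $\emptyset \subset A$), so assume $y, z \in B$, i.e. $(y,x), (z,x) \in G(A\times A)$. Writing $(y,x) = g(a,a')$ and $(z,x) = h(b,b')$ with $a,a',b,b' \in A$ and $g,h \in G$, I get $x = ga' = hb'$, hence $g = hb'a'^{-1}$, and then $y^{-1}z = a^{-1}g^{-1}hb = a^{-1}(a'b'^{-1})b \in A^{-1}AA^{-1}A$. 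Since $A^{-1}AA^{-1}A \in \cF$ by properties (c),(d) of Definition~\ref{generating_family}, and $B^{-1}B \subset A^{-1}AA^{-1}A$, we conclude $B^{-1}B \in \widehat{\cF}$. For the converse, suppose $B^{-1}B \subset A$ for some $A \in \cF$; I may assume $B$ is non-empty, so fix $x \in B$. Then for every $y \in B$ we have $x^{-1}y \in B^{-1}B \subset A$, so $y = x(x^{-1}y) \in xA$, giving $B \subset xA$. Now consider $E = G(A' \times A')$ where $A' = A \cup \{1\} \in \cF$ (using property (b)): then $(y, x) = x(x^{-1}y, 1) \in G(A' \times A')$ for all $y \in B$, so $B \subset E(x)$; intersecting, $B = (E \cap (B \times \{x\}))(x)$ with $E \cap (B\times\{x\}) \in \cE_{\cF}$, so $B$ is bounded. (Alternatively one can just take $E = G(xA' \times \{x\})$-type sets, but the above avoids appealing to Corollary~\ref{cor_connected}, which needs connectedness.)

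Next, every $A \in \widehat{\cF}$ is bounded: if $A \subset B$ with $B \in \cF$, then $A^{-1}A \subset B^{-1}B \in \cF \subset \widehat{\cF}$, so by the characterization just proved $A$ is bounded. Finally, suppose $(G, \cE_{\cF})$ is connected and let $B \subset G$ be bounded; I must show $B \in \widehat{\cF}$. By the characterization, $B^{-1}B \in \widehat{\cF}$. If $B$ is empty then $B \in \widehat{\cF}$ since $\widehat{\cF}$ is non-empty (property (a)) and closed under subsets. Otherwise fix $x \in B$; as above $B \subset xA$ where $A \in \cF$ with $B^{-1}B \subset A$. By Corollary~\ref{cor_connected}, $xA \in \widehat{\cF}$, hence $B \in \widehat{\cF}$. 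Combined with the previous assertion, $\widehat{\cF}$ is exactly the collection of bounded subsets.

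I expect the only mild subtlety to be bookkeeping with empty sets and the placement of the basepoint in the ``$\Leftarrow$'' direction of the characterization — specifically arranging for a controlled set $E$ with $B \subset E(x)$ without prematurely invoking connectedness, since the second sentence of the proposition must hold for possibly non-connected $(G,\cE_{\cF})$. Everything else is a routine unwinding of the definition of $G(A\times A)$ and the generating-family axioms.
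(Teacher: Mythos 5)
Your proof follows essentially the same route as the paper's: characterize boundedness via $B^{-1}B \in \widehat{\cF}$ using the fact that $G(A\times A)(x) = xA^{-1}A$ (which you rederive element-wise, arriving at the same bound $B^{-1}B\subset A^{-1}AA^{-1}A$), and invoke Corollary~\ref{cor_connected} for the connected case. One small inaccuracy: in the converse direction you assert $A' = A \cup \{1\} \in \cF$ ``using property (b)'', but property (b) only closes $\cF$ under unions of its own members, and $\{1\}$ need not belong to $\cF$. The repair is immediate: take $A' = A \cup A^{-1}A \in \cF$, which contains both $A$ and $1$, or observe that $A\cup\{1\}\in\widehat{\cF}$ and use $\cE_{\cF}=\cE_{\widehat{\cF}}$ from Proposition~\ref{prop_genfamtwo} (the paper sidesteps the issue by applying the shear map to $G(B\times B)$ directly). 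With that one-line fix the argument is complete and matches the paper's.
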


\begin{proof}
Any bounded subset $B \subset G$ is a subset of a set of the form
$C=G(A \times A)(x) =  xA^{-1}A$ where $x \in X$ and $A \in \cF$.   
Observe that $B^{-1}B  \subset C^{-1}C= A^{-1}AA^{-1}A \in \cF$ and so $B^{-1}B \in \widehat{\cF}$.

Assume that $B \subset G$ is nonempty and $B^{-1} B \in  \widehat{\cF}$.
Observe that $\pi_G(G(B \times B)) = B^{-1} B \in  \widehat{\cF}$ where $\pi_G$ is the shear map.
By Proposition \ref{prop_genfamtwo}, $G(B \times B) \in \cE_{\cF}$ and so $G(B \times \{b\})  \in  \cE_{\cF}$ for $b \in B$.
Since $B = G(B \times \{b\})(b)$ it follows that $B$ is bounded.

If $A \in  \widehat{\cF}$ then $A^{-1}A \in  \widehat{\cF}$ and thus  $A$ is bounded.
Assume that $(G, \cE_{\cF})$  is connected and let $B \subset G$ be bounded (and nonempty).
Since $G(B \times B) \in \cE_{\cF}$, we have $G(B \times \{b\})  \in  \cE_{\cF}$ for $b \in B$.
Hence  $\pi_G(G(B \times \{b\})) = b^{-1}B \in  \widehat{\cF}$.
By Corollary \ref{cor_connected}, $B = b (b^{-1}B) \in   \widehat{\cF}$.
\end{proof}

Next, we consider morphisms between coarse spaces 
(\cite[\S 2] {Roe} is a general reference for this topic).

\begin{definition}
Let $(X, \cE)$ and $(Y,\cE')$ be coarse spaces and let $f \colon X ~\ra~Y$ be a map.
\begin{enumerate}

\item  The map $f$ is {\it coarsely uniform} (synonymously, {\it bornologous})  if for all $E \in \cE$,   \newline
           $(f \times f)(E) \in \cE'$.

\item  The map $f$ is {\it coarsely proper} if the preimage of any bounded set in $Y$ is a bounded set in $X$.

\item The map $f$ is a {\it coarse embedding} if it is coarsely uniform and for all $E \in \cE'$,       $(f \times f)^{-1}(E) \in \cE$.

\end{enumerate}
\end{definition}

\begin{proposition}
\label{morphisms}
Let $G$  and $H$ be groups and  let $\cF$ and $\cF'$ be generating families on $G$ and $H$ respectively.
Let $G$ and $H$ have the compatible coarse structures $\cE_{\cF}$ and $\cE_{\cF'}$ respectively. 
Let $f \colon G ~\ra~ H$ be a homomorphism.

\begin{enumerate}
\item  The map $f$ is coarsely uniform if and only if  for all $F \in \cF$,  $f(F) \in  \widehat{\cF'}$.

\item  If for all $F' \in \cF'$, $f^{-1}(F') \in \widehat{\cF}$  then $f$ is coarsely proper.

\item  If  for all $F \in \cF$,  $f(F) \in \widehat{\cF'}$ and for all $F' \in \cF'$, $f^{-1}(F') \in  \widehat{\cF}$ 
          then $f$ is a coarse embedding.

\end{enumerate}

\end{proposition}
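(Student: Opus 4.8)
The plan is to prove the three parts in turn, using Proposition~\ref{prop_genfamtwo} (which identifies $\widehat{\cF'}$ with $\cF(\cE_{\cF'}) = \{\pi_H(E) : E \in \cE_{\cF'}\}$) and the description of bounded subsets in Proposition~\ref{prop_bounded}. A preliminary observation, used repeatedly, is that $\{1_G\} \in \widehat{\cF}$ for any generating family $\cF$ on $G$: axiom (a) of Definition~\ref{generating_family} supplies a non-empty $A_0 \in \cF$, and $1_G \in A_0^{-1}A_0 \in \cF$ by axioms (c) and (d); consequently, for a non-empty $F \in \cF$ we have $\{1_G\} \cup F \subseteq F \cup F^{-1}F \in \cF$. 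I will also use the elementary identity $G(A \times A) = \{(x,y) : x^{-1}y \in A^{-1}A\}$.

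For (1), the ``if'' direction is immediate: given $E \in \cE_{\cF}$, choose $A \in \cF$ with $E \subseteq G(A \times A)$; then $(f \times f)(E) \subseteq H(f(A) \times f(A))$, and picking $B \in \cF'$ with $f(A) \subseteq B$ yields $(f \times f)(E) \subseteq H(B \times B) \in \cE_{\cF'}$. The ``only if'' direction is the step I expect to require the most care. Pushing the square $F \times F$ forward only delivers $f(F)^{-1}f(F) \in \widehat{\cF'}$, which is strictly weaker than the desired $f(F) \in \widehat{\cF'}$ (e.g.\ take $\cF'$ to be the power set of a proper normal subgroup $N \lhd H$ and $f(F)$ a non-trivial coset of $N$). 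The remedy is to push forward a \emph{rectangle}: for non-empty $F$, since $F \cup \{1_G\} \subseteq A$ for some $A \in \cF$, the set $F \times \{1_G\}$ lies in $G(A \times A)$, so coarse uniformity gives $(f \times f)(F \times \{1_G\}) = f(F) \times \{1_H\} \in \cE_{\cF'}$; applying the shear map then gives $f(F) = \pi_H\bigl(f(F) \times \{1_H\}\bigr) \in \cF(\cE_{\cF'}) = \widehat{\cF'}$ by Proposition~\ref{prop_genfamtwo}. (The case $F = \emptyset$ is trivial since $\emptyset \in \widehat{\cF'}$.)

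For (2), let $B \subseteq H$ be bounded and put $D = f^{-1}(B)$. By Proposition~\ref{prop_bounded}, $B^{-1}B \in \widehat{\cF'}$, so $B^{-1}B \subseteq B'$ for some $B' \in \cF'$. Since $f$ is a homomorphism, $D^{-1}D \subseteq f^{-1}(B^{-1}B) \subseteq f^{-1}(B')$, and $f^{-1}(B') \in \widehat{\cF}$ by hypothesis; hence $D^{-1}D \in \widehat{\cF}$ and $D$ is bounded by Proposition~\ref{prop_bounded}.

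For (3), $f$ is coarsely uniform by part (1), so it remains to verify that $(f \times f)^{-1}(E) \in \cE_{\cF}$ for every $E \in \cE_{\cF'}$. Choose $B' \in \cF'$ with $E \subseteq H(B' \times B')$. Since $f$ is a homomorphism, $(f \times f)^{-1}\bigl(H(B' \times B')\bigr) = \{(x,y) : x^{-1}y \in f^{-1}((B')^{-1}B')\}$, and $(B')^{-1}B' \in \cF'$, so by hypothesis $f^{-1}((B')^{-1}B') \subseteq A$ for some $A \in \cF$. Choosing $C \in \cF$ with $A \cup \{1_G\} \subseteq C$ gives $A \subseteq C^{-1}C$, hence $(f \times f)^{-1}(E) \subseteq \{(x,y) : x^{-1}y \in C^{-1}C\} = G(C \times C) \in \cE_{\cF}$, which gives the claim.
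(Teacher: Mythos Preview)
Your proof is correct and follows essentially the same approach as the paper's. The paper simply compresses your explicit computations into the shear-map identity $\pi_H\bigl((f\times f)(E)\bigr) = f\bigl(\pi_G(E)\bigr)$ for part~(1), the inclusion $(f^{-1}(B))^{-1}f^{-1}(B)\subset f^{-1}(B^{-1}B)$ for part~(2), and the inclusion $\pi_G\bigl((f\times f)^{-1}(E')\bigr)\subset f^{-1}\bigl(\pi_H(E')\bigr)$ for part~(3), each combined with Proposition~\ref{prop_genfamtwo} or Proposition~\ref{prop_bounded} exactly as you do.
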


\begin{proof}
Let $\pi_G \colon G \times G ~\ra~ G$ and  $\pi_H \colon H \times H ~\ra~ H$ be the respective shear maps.
Assertion (1) of the Proposition follows from the  identity 
$\pi_H((f \times f)(E)) = f(\pi_G(E))$, which is valid for any $E \subset G \times G$ and in particular for $E \in \cE_{\cF}$,  and Proposition \ref{prop_genfamtwo}.
Assertion (2) follows from the inclusion $(f^{-1}(B))^{-1}f^{-1}(B) \subset  f^{-1}(B^{-1}B)$, which is valid for any $B \subset H$ and in particular for bounded subsets of $H$,
and Proposition \ref{prop_bounded}.
Assertion (3) follows from the inclusion $\pi_G((f \times f)^{-1}(E')) \subset f^{-1}(\pi_H(E'))$, 
which is valid for any $E' \subset H \times H$ and in particular for $E' \in \cE_{\cF'}$,  and Proposition \ref{prop_genfamtwo}.
\end{proof}

\begin{definition}[Coarse equivalance]  $\null$

\begin{enumerate}
\item  Let $(X, \cE)$ be a coarse space and let $S$ be a set.  Two maps  $p, q\colon S ~\ra X$ are {\it close} if
$\{ (p(s), q(s)) ~\big|~ s \in S\} \in \cE$.

\item  Let $(X, \cE)$ and $(Y,\cE')$ be coarse spaces.  A coarsely uniform map $f \colon X ~\ra~Y$ is a {\it coarse equivalence} if there exists a
coarsely uniform map $\psi \colon Y ~\ra~ X$ such that $\psi \circ f$ is close to the identity map of $X$ and $f \circ \psi$ is close to the identity map of $Y$.
The map $\psi $ is called a {\it coarse inverse} of  $f$.

\end{enumerate}

\end{definition}

The following criterion for a coarse embedding to a be coarse equivalence will be useful.

\begin{lemma}
\label{coarse_embedding_lemma}
Let $f\colon (X , \cE) ~\ra~ (Y, \cE')$ be a coarse embedding.
If $\psi \colon Y ~\ra X$ is a map such that  $f \circ \psi$ is close to the identity of $Y$ then
$f$ is a coarse equivalence.
\end{lemma}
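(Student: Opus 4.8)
The plan is to show that the given map $\psi$ is itself a coarse inverse of $f$. Since $f$ is assumed coarsely uniform (being a coarse embedding), it suffices to check two things: that $\psi$ is coarsely uniform, and that $\psi\circ f$ is close to $\id_X$ (closeness of $f\circ\psi$ to $\id_Y$ is a hypothesis). Throughout I would lean on the two defining features of a coarse embedding: $f$ is coarsely uniform, and for every $E\in\cE'$ one has $(f\times f)^{-1}(E)\in\cE$. This second, ``control-reflecting'' property is what lets me push statements from $Y$ back to $X$, and it is used in both parts.

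For coarse uniformity of $\psi$, fix $E\in\cE'$; I want $(\psi\times\psi)(E)\in\cE$. By the reflecting property it is enough to show that $(f\times f)\bigl((\psi\times\psi)(E)\bigr)$, which equals $\bigl((f\circ\psi)\times(f\circ\psi)\bigr)(E)$, lies in $\cE'$. Set $D=\{(y,(f\circ\psi)(y)) : y\in Y\}$; since $f\circ\psi$ is close to $\id_Y$, $D$ lies in $\cE'$ (up to taking inverses, which is allowed by axiom (d)). Unwinding the definitions of composition and inverse of subsets of $Y\times Y$, one checks the inclusion $\bigl((f\circ\psi)\times(f\circ\psi)\bigr)(E)\subset D^{-1}\circ E\circ D$, and the right-hand side is an entourage of $Y$ by axioms (d) and (e). Hence $(\psi\times\psi)(E)\subset (f\times f)^{-1}(D^{-1}\circ E\circ D)\in\cE$, as desired.

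For closeness of $\psi\circ f$ to $\id_X$, I must show $\{(\psi(f(x)),x):x\in X\}\in\cE$. Again using the reflecting property, it suffices to put this set through $f\times f$ and land in $\cE'$. But $\{(f(\psi(f(x))),f(x)):x\in X\}$ is, upon substituting $y=f(x)$, a subset of $\{(f(\psi(y)),y):y\in Y\}=D^{-1}\in\cE'$, hence an entourage of $Y$. Therefore $\{(\psi(f(x)),x):x\in X\}\subset (f\times f)^{-1}(D^{-1})\in\cE$, which is exactly closeness of $\psi\circ f$ to $\id_X$. Together with the previous paragraph this shows $\psi$ is a coarse inverse of $f$, so $f$ is a coarse equivalence.

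I do not expect a genuine obstacle here; the only point needing a little care is the elementary set inclusion $\bigl((f\circ\psi)\times(f\circ\psi)\bigr)(E)\subset D^{-1}\circ E\circ D$, which is where the hypothesis on $f\circ\psi$ actually enters, and it is routine once the definitions of $\circ$ and $(\,\cdot\,)^{-1}$ for subsets of a square are written out. It is also worth recording that closeness is a symmetric relation, so the hypothesis ``$f\circ\psi$ close to $\id_Y$'' may be invoked in whichever of the forms $\{(y,f(\psi(y)))\}\in\cE'$ or $\{(f(\psi(y)),y)\}\in\cE'$ is convenient at the moment.
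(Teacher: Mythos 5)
Your proposal is correct and follows essentially the same route as the paper: take $\psi$ itself as the coarse inverse, deduce its coarse uniformity from the inclusion $(f\times f)\bigl((\psi\times\psi)(E)\bigr)\subset M\circ E\circ M^{-1}$ (your $D$ is the paper's $M^{-1}$) together with the control-reflecting property of the coarse embedding, and obtain closeness of $\psi\circ f$ to $\id_X$ by pushing $\{(\psi(f(x)),x)\}$ through $f\times f$ and pulling back. The only cosmetic difference is that in the last step you observe directly that $(f\times f)(P)\subset M$, whereas the paper writes it as $M\circ(f\times f)(\Delta_X)$; both are fine.
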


\begin{proof}
Let $\psi \colon Y ~\ra X$ be a map  such that $f \circ \psi$ is close to the identity of $Y$.
Then $M = \{ (f(\psi(y)), y) ~|~ y \in Y \}$ is in $\cE'$. 
Let $E' \in \cE'$.
Then  $(f \times f) ( (\psi \times \psi)(E')) = M \circ E' \circ M^{-1}  \in \cE'$.
Since $f$ is coarse embedding,  $(f \times f)^{-1}((f \times f) ( (\psi \times \psi)(E'))) \in \cE$.
It follows that $(\psi \times \psi)(E') \in \cE$ because
$(\psi \times \psi)(E') \subset (f \times f)^{-1}((f \times f) ( (\psi \times \psi)(E')))$.
Hence $\psi$ is coarsely uniform.

Let $P = \{ (\psi(f(x)), x) ~|~ x \in X \}$.
Note that $(f \times f)(P) = M \circ (f \times f)(\Delta_X) \in \cE'$.
Since $f$ is a coarse embedding,  $(f \times f)^{-1}( (f \times f)(P)     \in \cE$ 
and so  $P \subset  (f \times f)^{-1}( (f \times f)(P)$ is also in $\cE$.
Thus $\psi \circ f$ is close to the identity of $X$.
\end{proof}

Note that Lemma \ref{coarse_embedding_lemma} implies that a surjective coarse embedding $f \colon (X, \cE) ~\ra~ (Y,\cE')$ is a coarse equivalence;  a coarse inverse of $f$ is given by  any section of $f$, that is,
a map $s \colon Y ~\ra~X$ such that $f \circ s$ is the identity map of $Y$.

\begin{proposition}
\label{coarse_equiv_one}
Let $G$ be a group and $\cF$ a generating family on $G$.
Let $\phi \colon G ~\ra~ Q$ be a surjective homomorphism. 
Let $N = \ker(\phi)$.
Then $\phi \colon (G, \cE_{N\cF}) ~\ra~ (Q, \cE_{\phi(\cF)})$ is a coarse equivalence
 (see Propositions  {\rm \ref{normal_subgroups}} and {\rm \ref{sub_and_quotient}} for the definitions of $N\cF$ and $\phi(\cF)$, respectively).
\end{proposition}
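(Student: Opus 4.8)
The plan is to apply the criterion recorded in Lemma \ref{coarse_embedding_lemma} together with the morphism characterization of Proposition \ref{morphisms}. First I would check that $\phi\colon (G,\cE_{N\cF})\ra (Q,\cE_{\phi(\cF)})$ is a coarse embedding. For coarse uniformity, by Proposition \ref{morphisms}(1) it suffices to show $\phi(NA)\in\widehat{\phi(\cF)}$ for every $A\in\cF$; but $\phi(NA)=\phi(N)\phi(A)=\phi(A)\in\phi(\cF)$ since $N=\ker(\phi)$, so this is immediate. For the embedding condition I would use Proposition \ref{morphisms}(3), which reduces matters to showing $\phi^{-1}(B)\in\widehat{N\cF}$ for every $B\in\phi(\cF)$. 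Here $B=\phi(A)$ for some $A\in\cF$, and the key point is the set-theoretic identity $\phi^{-1}(\phi(A))=NA$ (using $N=\ker\phi$): if $\phi(g)\in\phi(A)$ then $\phi(g)=\phi(a)$ for some $a\in A$, whence $ga^{-1}\in N$ and $g\in Na\subset NA$; the reverse inclusion is clear. Thus $\phi^{-1}(B)=NA\in N\cF\subset\widehat{N\cF}$, and $\phi$ is a coarse embedding.

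Next I would produce the map $\psi\colon Q\ra G$ required by Lemma \ref{coarse_embedding_lemma}. Since $\phi$ is surjective we may choose a set-theoretic section $s\colon Q\ra G$ with $\phi\circ s=\operatorname{id}_Q$ (no homomorphism property is needed). Take $\psi=s$. Then $\phi\circ\psi=\operatorname{id}_Q$ is literally the identity of $Q$, so in particular it is close to $\operatorname{id}_Q$ — the relevant set $\{(\phi(\psi(q)),q)\mid q\in Q\}$ is just the diagonal $\Delta_Q$, which is an entourage. Lemma \ref{coarse_embedding_lemma} now applies directly and yields that $\phi$ is a coarse equivalence, with $\psi=s$ serving as a coarse inverse. (One could equally invoke the remark following Lemma \ref{coarse_embedding_lemma}, that a surjective coarse embedding is automatically a coarse equivalence with any section as coarse inverse.)

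The only genuine content is the identity $\phi^{-1}(\phi(A))=NA$, which is where the normal subgroup $N$ and the particular generating family $N\cF$ enter; everything else is a formal consequence of results already established. I do not anticipate any real obstacle: the proof is a short verification that the two coarse structures $\cE_{N\cF}$ and $\cE_{\phi(\cF)}$ are, via $\phi$ and a section, mutually controlled. One small point to be careful about is the degenerate case where $A\cap$ (relevant coset structure) behaves trivially or $Q$ is a point; but since $\cF$ contains a non-empty set by Definition \ref{generating_family}(a), $\phi(\cF)$ and $N\cF$ likewise contain non-empty sets, so no connectivity or non-emptiness hypothesis is violated and the argument goes through verbatim.
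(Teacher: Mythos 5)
Your proof is correct and takes essentially the same approach as the paper: the paper's own proof is the one-line observation that Proposition \ref{morphisms}(3) makes $\phi$ a coarse embedding, hence a coarse equivalence by surjectivity (the remark following Lemma \ref{coarse_embedding_lemma}). You have simply written out the verifications the paper leaves implicit, namely $\phi(NA)=\phi(A)$ and $\phi^{-1}(\phi(A))=NA$.
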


\begin{proof}
By Proposition \ref{morphisms}(3), $\phi$ is a coarse embedding and thus a coarse equivalence since it is surjective by hypothesis.
\end{proof}

\begin{corollary}
\label{coarse_equiv_cor}
Let $G$ be a group and $\cF$ a generating family on $G$.
Let $\phi \colon G ~\ra~ Q$ be a surjective homomorphism. 
If  $\ker(\phi) \in {\widehat \cF}$ then $\phi \colon (G, \cE_{\cF}) ~\ra~ (Q, \cE_{\phi(\cF)})$ is a coarse equivalence
\end{corollary}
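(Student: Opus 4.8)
The plan is to deduce this from Proposition~\ref{coarse_equiv_one} by showing that, under the hypothesis $\ker(\phi)\in\widehat{\cF}$, the coarse structure $\cE_{N\cF}$ coincides with $\cE_{\cF}$, where $N=\ker(\phi)$. Note first that $N$ is a normal subgroup of $G$, so $N\cF$ is a genuine generating family on $G$ by Proposition~\ref{normal_subgroups}. Once the identification $\cE_{N\cF}=\cE_{\cF}$ is in hand, Proposition~\ref{coarse_equiv_one} applied to $\phi$ immediately yields that $\phi\colon(G,\cE_{\cF})=(G,\cE_{N\cF})\ra(Q,\cE_{\phi(\cF)})$ is a coarse equivalence, which is the assertion.

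To identify the two coarse structures, by Corollary~\ref{same_coarse} it suffices to prove $\widehat{N\cF}=\widehat{\cF}$. The inclusion $\widehat{\cF}\subset\widehat{N\cF}$ requires no hypothesis on $N$: since $1\in N$, every $A\in\cF$ satisfies $A\subset NA\in N\cF$, hence $A\in\widehat{N\cF}$, and completion is monotone. For the reverse inclusion I would invoke the hypothesis: choose $B\in\cF$ with $N\subset B$; then for any $A\in\cF$ we have $NA\subset BA$, and $BA\in\cF$ by property~(c) of Definition~\ref{generating_family}, so $NA\in\widehat{\cF}$. Thus $N\cF\subset\widehat{\cF}$, and since completion is monotone and idempotent this gives $\widehat{N\cF}\subset\widehat{\widehat{\cF}}=\widehat{\cF}$.

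There is essentially no serious obstacle here; the argument is a short bookkeeping step on top of Proposition~\ref{coarse_equiv_one}, which carries all the real content. The only mild point worth being careful about is the use of the hypothesis $\ker(\phi)\in\widehat{\cF}$ precisely at the step $NA\subset BA\in\cF$, and keeping track of the fact that elements of $\widehat{\cF}$, not of $\cF$, are what one controls.
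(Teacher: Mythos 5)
Your proposal is correct and follows essentially the same route as the paper: reduce to Proposition~\ref{coarse_equiv_one} by establishing $\widehat{N\cF}=\widehat{\cF}$ (hence $\cE_{N\cF}=\cE_{\cF}$ via Proposition~\ref{prop_genfamtwo}/Corollary~\ref{same_coarse}). You merely supply the details of the identity $\widehat{N\cF}=\widehat{\cF}$ that the paper asserts without proof, and your verification of both inclusions is accurate.
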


\begin{proof}
If  $N = \ker(\phi) \in {\widehat \cF}$ then $ {\widehat {N\cF}} =  {\widehat \cF}$.  
Also note that  $\phi({\widehat \cF})= \widehat{\phi(\cF)}.$
The conclusion of the Corollary follows from Proposition \ref{coarse_equiv_one} and  Proposition \ref{prop_genfamtwo}.
\end{proof}

\begin{proposition}
\label{coarse_equiv_two}
Let $G$ be a group and $\cF$ a generating family on $G$.
Let $H \subset G$ be a subgroup.
Assume that there exists $B \in \cF$ such that  $HB = G$.
Then the inclusion map $i \colon (H, \cE_{\cF|_H}) ~\ra~ (G, \cE_{\cF})$ is a coarse equivalence.
\end{proposition}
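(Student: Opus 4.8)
The plan is to reduce everything to Lemma \ref{coarse_embedding_lemma}. First I would check that $\cF|_H$ really is a generating family on $H$ (so that $\cE_{\cF|_H}$ is defined): the hypothesis $G = HB$ forces $B \cap H$ to be non-empty, since any $h \in H$ can be written $h = h'b'$ with $h' \in H$ and $b' \in B$, whence $b' = h'^{-1}h \in B \cap H$; now Proposition \ref{sub_and_quotient}(i) applies.

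Next I would show that $i$ is a coarse embedding by verifying the hypotheses of Proposition \ref{morphisms}(3). Both conditions are immediate: any $F \in \cF|_H$ equals $A \cap H$ for some $A \in \cF$, so $i(F) = A \cap H \subset A \in \cF$ and hence $i(F) \in \widehat{\cF}$; and for $F' \in \cF$ we have $i^{-1}(F') = F' \cap H \in \cF|_H \subset \widehat{\cF|_H}$.

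The substantive step is to produce a map $\psi\colon G \to H$ such that $i \circ \psi$ is close to the identity of $G$; Lemma \ref{coarse_embedding_lemma} will then finish the proof, with $\psi$ serving as a coarse inverse of $i$. Using $G = HB$, I would choose for each $g \in G$ a factorization $g = h_g b_g$ with $h_g \in H$ and $b_g \in B$, and set $\psi(g) = h_g$. The key observation is that $h_g^{-1}(\psi(g), g) = (1, b_g)$, so every pair $(\psi(g), g)$ lies in $G(A \times A)$ for the single set $A = B \cup B^{-1}B$, which belongs to $\cF$ by the generating-family axioms and contains both $1$ (namely $b^{-1}b$ for any $b \in B$) and all of $B$. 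Thus $\{(\psi(g), g)\mid g \in G\} \subset G(A \times A) \in \cE_{\cF}$, which is exactly the statement that $i \circ \psi$ is close to the identity of $G$.

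I do not expect a genuine obstacle here; the only point requiring a little care is that we are \emph{not} assuming $(G, \cE_{\cF})$ is connected, so $\{1\}$ need not belong to $\widehat{\cF}$ — this is why $\psi$ is built using $B \cup B^{-1}B$ rather than $\{1\}\cup B$. (Selecting the factorizations $g = h_g b_g$ invokes the axiom of choice, which is unproblematic.)
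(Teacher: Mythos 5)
Your proof is correct and follows essentially the same route as the paper's: verify that $i$ is a coarse embedding (via Proposition \ref{morphisms}(3)), build $\psi\colon G \to H$ from a choice of factorizations $g = h_g b_g$ (the paper uses right-coset representatives in $B$, which amounts to the same thing), check that $i\circ\psi$ is close to the identity using the single controlling set built from $B$, and invoke Lemma \ref{coarse_embedding_lemma}. Your extra care about not assuming connectedness (using $B\cup B^{-1}B$ rather than $\{1\}\cup B$) is a sound touch but does not change the argument.
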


\begin{proof}
The set $HB$ can be expressed as the disjoint union of right cosets of $H$ with coset representatives in $B$ and so
$G = \coprod_{j \in J} Hb_j$, where $\{ b_j ~\big|~ j \in J\} \subset B$.
Since $1 = h_0 b_{j_0}$ for some $h_0 \in H$ and $j_0 \in J$, we have $H \cap B^{-1}$ is non-empty (and so $\cF|_H$ is non-empty).
Clearly, $i \colon (H, \cE_{\cF|_H}) ~\ra~ (G, \cE_{\cF})$ is a coarse embedding.

Define the map $s \colon G ~\ra~ H$ by $s(xb_j) = x$ for $j \in J$ and $x \in H$.
Consider the set
$E = \{ (i \circ s(y), y) ~\big|~ y \in G\}$.
For $j \in J$ and $x \in H$,  $\pi_G(i \circ s(xb_j), xb_j) =  b_j^{-1} x^{-1} x = b_j^{-1}$.
Hence $\pi_G(E) \subset B^{-1} \in \cF$ and so $E \in \cE_{\cF}$ which shows that $i \circ s$ is close to the identity map of $G$.
By Lemma  \ref{coarse_embedding_lemma},  the map $i$ is a coarse equivalence.
\end{proof}

In order apply the above results in the case of the group-compact coarse structure on a topological group we will need to consider the
following hypothesis on a closed subgroup.

\begin{definition}[Property (K)]
\label{Property_K}
A  map $f \colon X ~\ra~ Y$ between Hausdorff spaces has {\it Property (K)}  if for every compact $K \subset Y$ there exists
a compact $K' \subset X$ such that  $f(K') = K$.
Let $G$ be a Hausdorff topological group and $H$ a closed subgroup. 
We say that the pair $(G, H)$ has {\it Property (K)} if the quotient map
$p_H \colon G ~\ra~ G/H$ from $G$ to the space $G/H$ of left cosets of $G$ has Property (K).
\end{definition}

Let $G$ be a topological group and $H$ a subgroup of $G$. 
The subgroup $H$ is said to admit a {\it local cross-section} if there exists a non-empty open subset $U$ of $G/H$ and
a continuous map $s \colon U ~\ra~ G$ such that $p_H \circ s$ is the identity map of $U$.
A local cross-section exists if and only if $p_H$ is a locally trivial $H$-bundle.

\begin{proposition}
\label{local_section}
Let $G$ be a Hausdorff topological group and $H$ a closed subgroup of $G$. 
If $H$ admits a local cross-section then $(G,H)$ has Property (K).
\end{proposition}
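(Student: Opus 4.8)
The plan is to leverage the local triviality of $p_H \colon G \to G/H$ together with the hypothesis that $H$ is a closed subgroup, so that in particular $H$ itself is a Hausdorff topological group. First I would fix a compact subset $K \subset G/H$; I must produce a compact $K' \subset G$ with $p_H(K') = K$. The idea is to cover $K$ by finitely many translates of the domain of a local cross-section: if $s \colon U \to G$ is a continuous local section over the non-empty open set $U \subset G/H$, then for each $gH \in G/H$ the set $gU$ is open in $G/H$ (since left translation by $g$ is a homeomorphism of $G/H$) and $g \cdot s(g^{-1} x)$, for $x \in gU$, defines a continuous local section over $gU$. Since $K$ is compact and the $gU$ (as $g$ ranges over $G$) cover $G/H$, finitely many of them, say $g_1 U, \dots, g_n U$, cover $K$.

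Next I would pass to a closed refinement. Because $G/H$ is Hausdorff (as $H$ is closed) and $K$ is compact, I can write $K = C_1 \cup \cdots \cup C_n$ where each $C_i \subset g_i U$ is compact (for instance, choose this refinement directly: each $C_i = K \cap (g_i U \setminus \bigcup_{j<i} \text{(interiors chosen to still cover)})$; more cleanly, use that a compact Hausdorff space is normal to shrink the open cover $\{g_i U \cap K\}$ of $K$ to a cover by relatively compact open sets with closures inside the $g_i U$, and let $C_i$ be those closures). Then the continuous section $s_i$ over $g_i U$ restricts to a continuous map $s_i|_{C_i} \colon C_i \to G$, whose image $s_i(C_i)$ is a compact subset of $G$ satisfying $p_H(s_i(C_i)) = C_i$.

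Finally, set $K' = s_1(C_1) \cup \cdots \cup s_n(C_n)$. This is a finite union of compact subsets of $G$, hence compact, and $p_H(K') = \bigcup_i p_H(s_i(C_i)) = \bigcup_i C_i = K$. Thus $p_H$ has Property (K), which by definition means $(G,H)$ has Property (K).

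The main obstacle is the shrinking step: given the finite open cover $\{g_i U\}$ of the compact Hausdorff space $K$, I need a subordinate cover by sets whose \emph{closures in $G/H$} are compact and contained in the respective $g_i U$. This is a standard consequence of compact Hausdorff spaces being normal (one can first shrink the open cover so that the closures of the smaller open sets lie inside the original ones, and compactness of $K$ makes those closures compact), but it is the only point requiring care; everything else is a routine manipulation of continuous maps and translates. I should also double-check that the translated section $x \mapsto g \cdot s(g^{-1}x)$ is well-defined and continuous on $gU$, which follows from continuity of $s$, of left translation on $G$, and of the homeomorphism $gH' \mapsto g(gH')$ wait — from continuity of left translation by $g^{-1}$ on $G/H$ and by $g$ on $G$.
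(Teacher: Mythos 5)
Your proof is correct and follows essentially the same route as the paper: cover the compact set $K$ by finitely many translates of the domain of the local section, shrink to compact pieces inside each translate, and push these up via the translated sections. The only (immaterial) difference is that the paper performs the shrinking once at the start, using regularity of $G/H$ to find an open $V$ with $\overline{V}\subset U$ before translating, whereas you shrink the finite cover afterwards using normality of the compact Hausdorff space $K$.
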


\begin{proof}
Let  $s \colon U ~\ra~ G$ be a local cross-section.
The space $G/H$ is a regular topological space (\cite[Theorem 1.5.6]{Arhangel}) and so there exists a non-empty open set $V$
such that $\overline{V} \subset U$.
Since $K$ is compact, it is covered by finitely many translates of $V$, say $K \subset \bigcup^n_{i=1} g_i V$.
Let $K' =  \bigcup^n_{i=1} g_i s(\overline{V_i}  \cap g_i^{-1}K)$.  Then $K'$ is compact and $p_H(K') = K$.
\end{proof}

Property (K) for locally compact subgroups is a consequence of a result of Antonyan (\cite{Antonyan}).

\begin{proposition}
\label{locally_compact_Property_K}
Let $G$ be a Hausdorff topological group and $H$ a closed subgroup of $G$. 
If  $H$ is locally compact then $(G,H)$ has Property (K).
\end{proposition}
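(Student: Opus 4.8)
The plan is to deduce Property (K) for a closed locally compact subgroup $H \subset G$ from a theorem of Antonyan which guarantees the existence of a suitable slice or cross-section near the identity coset. The key observation is that a closed locally compact subgroup of a Hausdorff topological group admits a local cross-section: indeed, Antonyan's result (\cite{Antonyan}) asserts that if $H$ is a locally compact subgroup of a Hausdorff topological group $G$, then $H$ has a ``$H$-slice'' at the identity coset $eH \in G/H$, equivalently the quotient map $p_H \colon G \to G/H$ is a locally trivial principal $H$-bundle in a neighborhood of $eH$; translating this trivialization by elements of $G$ shows $p_H$ is locally trivial everywhere, so in particular a continuous local cross-section $s \colon U \to G$ exists on some non-empty open $U \subset G/H$. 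Once this is in hand, the conclusion is immediate from Proposition \ref{local_section}.

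Concretely, the steps I would carry out are as follows. First, I would invoke \cite{Antonyan} to obtain, for the closed locally compact subgroup $H$, a non-empty open neighborhood $U$ of the identity coset in $G/H$ together with a continuous section $s \colon U \to G$ of $p_H$ over $U$; this is the content of Antonyan's existence theorem for slices of locally compact transformation groups specialized to the translation action of $H$ on $G$. Second, having produced a local cross-section, I would simply cite Proposition \ref{local_section}, which says that the existence of a local cross-section for $H$ implies that $(G,H)$ has Property (K). That proposition already handles the compactness bookkeeping: it uses regularity of $G/H$ to shrink $U$ to an open $V$ with $\overline{V} \subset U$, covers a given compact $K \subset G/H$ by finitely many translates $g_i V$, and assembles $K' = \bigcup_{i=1}^n g_i\, s\!\left(\overline{V} \cap g_i^{-1}K\right)$ as the desired compact preimage with $p_H(K') = K$.

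The only real content — and hence the main obstacle — is the appeal to Antonyan's theorem: one must verify that its hypotheses match our situation (a closed, locally compact subgroup of a Hausdorff topological group, acting by translations, with no local compactness assumed on $G$ itself) and that its conclusion really does yield a continuous cross-section over an open set rather than merely some weaker structure. Everything after that is formal and is already packaged in Proposition \ref{local_section}. I would therefore keep the proof short: state the reduction, cite Antonyan for the local cross-section, and finish by quoting Proposition \ref{local_section}.
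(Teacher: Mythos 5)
Your reduction to Proposition \ref{local_section} would be fine if your key claim were true, but it is not: a closed locally compact subgroup $H$ of a Hausdorff topological group $G$ need \emph{not} admit a local cross-section, and Antonyan's theorem does not assert that it does. What \cite[Corollary 1.3]{Antonyan} actually provides --- and what the paper's proof uses --- is much weaker: a closed subspace $F \subset G$ such that the restriction $(p_H)|_F \colon F \to G/H$ is a surjective \emph{perfect} map. Property (K) then follows in one line, since for compact $K \subset G/H$ the set $K' = F \cap p_H^{-1}(K)$ is compact and satisfies $p_H(K') = K$; no local triviality, no regularity of $G/H$, and no covering argument are needed. The existence of local cross-sections is genuinely tied to $H$ being a Lie group (Mostow/Karube-type theorems), not merely locally compact, so the "only real content" you flag --- matching Antonyan's hypotheses and conclusion --- is exactly where the argument breaks: the conclusion you want is strictly stronger than the one available.

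For a concrete failure of your step, take $G = \prod_{n=1}^{\infty} SU(2)$ and $H = \prod_{n=1}^{\infty}\{\pm I\}$, a compact (hence locally compact) closed central subgroup with $G/H \cong \prod_{n=1}^{\infty} SO(3)$. Any nonempty open $U \subset G/H$ contains a set of the form $\{v_1\}\times\cdots\times\{v_k\}\times \prod_{n>k} SO(3)$, so a continuous section $s \colon U \to G$ of $p_H$ would, after restricting to that set and projecting to the $(k+1)$-st coordinate, produce a continuous section of the nontrivial double cover $SU(2) \to SO(3)$ over all of $SO(3)$, which does not exist. Thus $p_H$ admits no local cross-section even though $H$ is compact, while Property (K) still holds (here trivially, and in general by Antonyan's perfect-map statement). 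The repair is to cite the statement Antonyan actually proves and argue directly with the perfect map, bypassing Proposition \ref{local_section} entirely.
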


\begin{proof}
By \cite[Corollary 1.3]{Antonyan}, there exists a closed subspace $F \subset G$ such that the restriction $(p_H)|_F \colon F ~\ra~ G/H$
is a surjective perfect map.  Hence if $K \subset G/H$ is compact then  $K' = F \cap p_H^{-1}(K)$ is a compact subset of $G$ such that $p_H(K')=K$.
\end{proof}

\begin{proposition}
\label{coarse_equiv_cocompact_subgroup}
Let $G$ be a Hausdorff topological group and let $H$ be a closed subgroup of $G$ such that $G/H$ is compact.
Assume that $(G, H)$ has Property (K).
Then the inclusion map $i \colon (H, \cE_{\cC(H)}) ~\ra~ (G, \cE_{\cC(G)})$ is a coarse equivalence.
\end{proposition}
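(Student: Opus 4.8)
The plan is to deduce the result from Proposition~\ref{coarse_equiv_two}, applied with the generating family $\cF=\cC(G)$ on $G$ and with $H$ the given closed subgroup. To do so I must verify the two inputs of that proposition: that the restricted family $\cC(G)|_H$ produces the group-compact coarse structure on $H$, and that some \emph{compact} subset $B\subset G$ satisfies $HB=G$.

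The first input is elementary. Because $H$ is closed in $G$, for any compact $K\subset G$ the set $K\cap H$ is a closed subset of the compact set $K$, hence compact, so $\cC(G)|_H\subset\cC(H)$. Conversely, since $H$ carries the subspace topology, every compact subset $L$ of $H$ is also compact in $G$, and $L=L\cap H$, whence $\cC(H)\subset\cC(G)|_H$. Thus $\cC(G)|_H=\cC(H)$ exactly, and in particular $\cE_{\cC(G)|_H}=\cE_{\cC(H)}$. (Proposition~\ref{sub_and_quotient}(i) applies here because $\{1\}\cap H\ne\emptyset$, so $\cC(G)|_H$ is indeed a generating family on $H$.)

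For the second input I would use Property (K) together with the compactness of $G/H$. Since $G/H$ is compact and $(G,H)$ has Property (K), there is a compact set $K'\subset G$ with $p_H(K')=G/H$; unwinding the definition of $p_H$, this says precisely that $K'H=G$. The only subtlety is that Property (K), and hence this set $K'$, is phrased in terms of left cosets, whereas Proposition~\ref{coarse_equiv_two} requires a covering $HB=G$ by right cosets. This is resolved by inversion: the map $g\mapsto g^{-1}$ is a self-homeomorphism of $G$, so $B:=(K')^{-1}$ is compact, and from $K'H=G$ we get $HB=H(K')^{-1}=(K'H)^{-1}=G$.

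With both inputs established, Proposition~\ref{coarse_equiv_two} (with $G$, $\cF=\cC(G)$, $H$, and this $B\in\cC(G)$) gives that the inclusion $i\colon(H,\cE_{\cC(G)|_H})~\ra~(G,\cE_{\cC(G)})$ is a coarse equivalence, and the identification $\cC(G)|_H=\cC(H)$ makes this the asserted statement. I do not expect a genuine obstacle here: the substantive work has been packaged into Proposition~\ref{coarse_equiv_two} and into the notion of Property (K). The only points requiring care are the left-versus-right coset bookkeeping just mentioned, and the observation that $\cC(G)|_H$ coincides \emph{on the nose} with $\cC(H)$, which uses that $H$ is closed for one inclusion and that it carries the subspace topology for the other.
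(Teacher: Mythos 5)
Your proof is correct and follows essentially the same route as the paper: both apply Proposition~\ref{coarse_equiv_two} with $\cF=\cC(G)$, using Property (K) and the compactness of $G/H$ to produce a compact $B$ with $HB=G$ (via the same inversion trick converting $K'H=G$ into $H(K')^{-1}=G$), together with the identification $\cC(G)|_H=\cC(H)$ for the closed subgroup $H$. Your write-up simply spells out these verifications in more detail than the paper does.
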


\begin{proof}
Property (K) for $H$ implies there exists a compact set $B \subset G$ such that $p_H(B) = G/H$ and so
$G = BH$, equivalently, $G = H B^{-1}$.
Note that since $H$ is closed in $G$ we have $\cC(G)|_H = \cC(H)$.
The conclusion of the Proposition  follows from Proposition \ref{coarse_equiv_two}.
\end{proof}

\begin{proposition}
\label{coarse_equiv_compact_normal_subgroup}
Let $G$ be a Hausdorff topological group and let $N$ be a compact normal subgroup of $G$.
Then the quotient map
 $p_N \colon (G, \cE_{\cC(G)}) ~\ra~ (G/N, \cE_{\cC(G/N)})$ is a coarse equivalence.
\end{proposition}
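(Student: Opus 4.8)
The plan is to identify a generating family on $G$ whose associated coarse structure is the group-compact one, so that Corollary~\ref{coarse_equiv_cor} applies directly to the quotient map $p_N$. The natural candidate is $\cF = N\cC(G)$, the enlargement of $\cC(G)$ by the compact normal subgroup $N$ (Proposition~\ref{normal_subgroups}). First I would observe that since $N$ is compact, for any compact $K \subset G$ the set $NK$ is again compact (it is the continuous image of $N \times K$ under multiplication), so $N\cC(G) \subset \cC(G)$; the reverse inclusion is trivial since $1 \in N$. Hence $\widehat{N\cC(G)} = \widehat{\cC(G)}$, and by Corollary~\ref{same_coarse} we get $\cE_{N\cC(G)} = \cE_{\cC(G)}$, i.e. the group-compact coarse structure on $G$ is exactly $\cE_{N\cC(G)}$.

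Next I would note that $N = \ker(p_N)$ is compact, hence $N \in \cC(G) = \widehat{\cC(G)}$, so the hypothesis $\ker(p_N) \in \widehat{\cF}$ of Corollary~\ref{coarse_equiv_cor} is satisfied with $\cF = \cC(G)$. Applying that corollary, $p_N \colon (G, \cE_{\cC(G)}) \to (G/N, \cE_{p_N(\cC(G))})$ is a coarse equivalence. It then remains to check that $\cE_{p_N(\cC(G))}$ is the group-compact coarse structure $\cE_{\cC(G/N)}$ on $G/N$; by Corollary~\ref{same_coarse} this amounts to showing $\widehat{p_N(\cC(G))} = \widehat{\cC(G/N)}$. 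The inclusion $p_N(\cC(G)) \subset \cC(G/N)$ is immediate from continuity of $p_N$. For the reverse inclusion one needs: every compact subset of $G/N$ is the image under $p_N$ of a compact subset of $G$ — that is, $p_N$ has Property~(K) in the sense of Definition~\ref{Property_K}.

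The main obstacle, then, is verifying Property~(K) for $p_N \colon G \to G/N$. But this is exactly the situation covered by Proposition~\ref{locally_compact_Property_K}: $N$ is a closed (indeed compact) subgroup of $G$, and a compact group is locally compact, so $(G,N)$ has Property~(K). (Alternatively, since $N$ is compact, one could argue more directly: given compact $K \subset G/N$, the set $p_N^{-1}(K)$ need not be compact, but one can cover $K$ by finitely many translates of the image of a compact neighborhood and use compactness of $N$ — however invoking Proposition~\ref{locally_compact_Property_K} is cleanest.) With Property~(K) in hand, $\widehat{\cC(G/N)} \subset \widehat{p_N(\cC(G))}$, so the two completions agree, $\cE_{p_N(\cC(G))} = \cE_{\cC(G/N)}$, and the coarse equivalence furnished by Corollary~\ref{coarse_equiv_cor} is precisely the asserted map $p_N \colon (G, \cE_{\cC(G)}) \to (G/N, \cE_{\cC(G/N)})$. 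This completes the proof.
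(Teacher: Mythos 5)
Your proof is correct and follows essentially the same route as the paper: apply Corollary~\ref{coarse_equiv_cor} with $\cF=\cC(G)$ (using that $N=\ker(p_N)$ is compact, hence in $\widehat{\cC(G)}$) and identify $p_N(\cC(G))$ with $\cC(G/N)$ via Proposition~\ref{locally_compact_Property_K}. The opening detour through $N\cC(G)$ is unnecessary, and the incidental claim $\cC(G)=\widehat{\cC(G)}$ is not literally true (the completion contains all relatively compact subsets), but neither affects the argument.
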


\begin{proof}
Clearly $p_H( \cC(G) ) \subset \cC(G/N)$.
By Proposition \ref{locally_compact_Property_K}, $\cC(G/N)  \subset p_H( \cC(G) )$.
Hence $p_H( \cC(G)  ) = \cC(G/N)$
and so the conclusion of the Proposition   follows from Corollary \ref{coarse_equiv_cor}.
\end{proof}

\begin{remark} 
\label{not_necessarily_normal}  
The assumption in Proposition \ref{coarse_equiv_compact_normal_subgroup} that the subgroup $N$ of $G$ is normal can be eliminated if we interpret  the homogeneous space $G/N$ as a coarse space with the group-compact coarse structure as described in
Remark  \ref{group_compact_generalized}(3).
\end{remark}

%%%%%%%%%%%%%%%%%%%%%%%%%%%%%%%%%%%%%%%%%%%%
%%
%%  Section: Asymptotic Dimension
%%
%%%%%%%%%%%%%%%%%%%%%%%%%%%%%%%%%%%%%%%%%%%%

\section{Asymptotic Dimension}
\label{sec:asymptotic_dimension}

In this section we develop asymptotic dimension theory for a group $G$ with a compatible coarse structure $\cE_{\cF}$.
We give three equivalent characterizations of the assertion $\asdim(G, \cE_{\cF}) \leq n$ (Proposition \ref{asdim_characterizations}).
The other main results are: Theorem \ref{asdim_zero} characterizing groups with $\asdim(G, \cE_{\cF}) =0$, subgroup theorems (Theorems \ref{asdim_of_a_subgroup} and \ref{asdim_by_subgroups}) and an extension theorem (Theorem \ref{extension_theorem});  
in the special case of the group-compact coarse structure on a Hausdorff topological group the corresponding results are, respectively,
Corollaries 
 \ref{asdim_zero_group_compact},
 \ref{asdim_of_a_closed_subgroup},
 \ref{asdim_by_subgroups_group_compact} and
 Theorem \ref{extension_of_top_groups}.

Let $(X, \cE)$ be a coarse space
and  let $\cU$ be a collection of subsets of $X$.
Let $L \in \cE$ be an entourage.
The collection $\cU$  is said to be {\it $L$-disjoint} 
if for all $A, B \in \cU$ such that $A \neq B$  the sets $A \times B$ and $L$ are disjoint.
A {\it uniform bound} for $\cU$ is an entourage $M \in \cE$ such that
$A \times A \subset M$ for all $A \in \cU$.
The collection $\cU$ is {\it uniformly bounded} if a uniform bound for $\cU$ exists.

\begin{definition}
\label{asdim_def}
Let $(X, \cE)$ be a coarse space and $n$ a non-negative integer.
Then $\asdim(X,\cE) \leq n$ if for every entourage $L \in \cE$ there is a cover
$\cU$ of $X$ such that:
\begin{enumerate}

\item  $\cU = {\cU}_0 \cup \cdots  \cup \ {\cU}_n$, 

\item  ${\cU}_i$ is $L$-disjoint for each index $i$,

\item  $\cU$ is uniformly bounded.

\end{enumerate}

If no such integer exists, we say $\asdim(X,\cE) = \infty $.
If $\asdim(X,\cE) \leq n$  and $\asdim(X,\cE) \leq n-1$ is false then we say  $\asdim(X,\cE) = n$
and the integer $n$ is called the {\it asymptotic dimension of $X$ (with respect to $\cE$)}.

\end{definition}

Definition \ref{asdim_def} differs slightly from  Roe's original definition  (\cite[Definition 9.1]{Roe}) in that he assumes $\cU$ is countable.
Grave gives the following equivalent characterization of the assertion $\asdim(X, \cE) \leq n$.

\begin{theorem} (\cite[Theorem 9]{Grave})
\label{theorem_of_Grave}
Let $(X, \cE)$ be a coarse space and $n$ a non-negative integer.
Then $\asdim(X,\cE) \leq n$ if and only if for every entourage $L \in \cE$ there is a cover
$\cU$ of $X$ such that:
\begin{enumerate}

\item  The multiplicity of $\cU$ is less than or equal to $n+1$ (that is, every point of $X$ is contained in at most $n+1$ elements of $\cU$),

\item  for all $x \in X$ there exists $U \in \cU$ such that $L(x) \subset U$,

\item  $\cU$ is uniformly bounded.

\end{enumerate}

\end{theorem}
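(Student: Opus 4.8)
The plan is to prove the two implications of the biconditional separately; the implication from Definition~\ref{asdim_def} to the multiplicity formulation is short, and the converse is the substantive one. Throughout, for $E \subseteq X \times X$ and $A \subseteq X$ I write $E[A] = \bigcup_{a \in A} E(a)$, so that $E_1[E_2[A]] = (E_1 \circ E_2)[A]$. Given an entourage $L$, I replace it at the outset by the symmetric entourage $L' = \Delta_X \cup L \cup L^{-1}$, which is an entourage by Definition~\ref{def:coarse_structure}(a),(c),(d); since $L \subseteq L'$, every $L'$-disjoint family is $L$-disjoint and a cover with the property ``$L'(x) \subseteq U$ for some $U \in \cU$'' also has this property for $L$, so it suffices to argue with $L'$ in place of $L$. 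I also discard empty sets, so every member of every cover below is nonempty.

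For the forward implication, assume $\asdim(X,\cE) \leq n$ in the sense of Definition~\ref{asdim_def}. Given $L$, form $L'$ and apply Definition~\ref{asdim_def} to the entourage $\Lambda = L' \circ L'$: there is a cover $\cU = \cU_0 \cup \cdots \cup \cU_n$ with each $\cU_i$ being $\Lambda$-disjoint and with a uniform bound $M$. For $U \in \cU$ set $\widehat U = L'[U]$ and let $\widehat\cU$ be the union of the families $\widehat\cU_i = \{\widehat U : U \in \cU_i\}$. If $x \in \widehat U \cap \widehat{U'}$ with $U, U' \in \cU_i$, pick $u \in U$ and $u' \in U'$ with $(x,u),(x,u') \in L'$; symmetry of $L'$ gives $(u,u') \in L' \circ L' = \Lambda$, while $(u,u') \in U \times U'$, so $\Lambda$-disjointness of $\cU_i$ forces $U = U'$. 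Hence each $\widehat\cU_i$ consists of pairwise disjoint (nonempty) sets and $\widehat\cU$ has multiplicity $\leq n+1$. For any $x$, choosing $U \in \cU$ with $x \in U$ gives $L(x) \subseteq L'(x) \subseteq L'[U] = \widehat U$, which is condition~(2). Finally $\widehat U \times \widehat U \subseteq L' \circ M \circ L'$ for every $U$, so $L' \circ M \circ L'$ is a uniform bound for $\widehat\cU$. Thus $\widehat\cU$ is a cover as required.

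For the converse, assume that for every entourage there is a cover satisfying conditions~(1)--(3), and let $L$ be given; replace $L$ by $L'$, so $L$ is now symmetric with $\Delta_X \subseteq L$. Apply the hypothesis to the entourage $L^N = L \circ \cdots \circ L$ ($N$ factors), for a suitably large $N$ depending only on $n$: obtain a cover $\cU$ of multiplicity $\leq n+1$ with a uniform bound $M$ such that for every $x$ there is $U \in \cU$ with $L^N(x) \subseteq U$. For $0 \leq t \leq N$ put $S_t(x) = \{U \in \cU : L^t(x) \subseteq U\}$. Since $\Delta_X \subseteq L$ one has $S_0(x) \supseteq S_1(x) \supseteq \cdots \supseteq S_N(x)$; each $S_t(x)$ is nonempty by the Lebesgue property, and $|S_0(x)| \leq n+1$ by the multiplicity bound. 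The elementary but crucial fact is: if $(x,y) \in L$ then $S_{t+1}(x) \subseteq S_t(y)$ for all $t$, since $L^t(y) \subseteq L^{t+1}(x)$ by symmetry of $L$. A pigeonhole argument on the chain $(S_t(x))_{0 \leq t \leq N}$ (nonempty sets, with at most $n+1$ of them distinct) assigns to each $x$ a level $c(x) \in \{0,1,\ldots,n\}$ at which the chain stabilizes, together with a patch $P(x)$, namely an intersection of members of $\cU$ that deeply contain $x$ at that level. Set $\cV_c = \{P(x) : c(x) = c\}$ and $\cV = \cV_0 \cup \cdots \cup \cV_n$. Then $\cV$ covers $X$ (each $x$ lies in $P(x)$), each $P(x)$ lies in a single member of $\cU$ so $M$ is a uniform bound for $\cV$, and each $\cV_c$ is $L$-disjoint: if an $L$-related pair meets two level-$c$ patches, the displayed fact, fed through the choice of $c(x)$ and $P(x)$, forces those patches to coincide. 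Hence $\asdim(X,\cE) \leq n$ by Definition~\ref{asdim_def}.

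The main obstacle is precisely the combinatorial bookkeeping in the converse: one must choose the level $c(x)$ and the patch $P(x)$ so that the ``depth'' built into $P(x)$ exceeds, by exactly the right margin, the single shift of level that one $L$-step induces on the chain $(S_t(x))_t$, so that two level-$c$ patches meeting an $L$-related pair are equal rather than merely $L$-close. This is the coarse, quantitative counterpart of Ostrand's theorem that a cover of multiplicity $\leq n+1$ refines to a union of $n+1$ disjoint families; the step can alternatively be carried out by passing to the (at most $n$-dimensional) nerve of $\cU$ and its barycentric subdivision, or by invoking \cite[Theorem~9]{Grave} directly.
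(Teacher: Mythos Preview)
The paper does not prove this theorem at all: it is simply quoted from \cite[Theorem~9]{Grave} and used as a black box in the proof of Proposition~\ref{asdim_characterizations}. So there is no ``paper's proof'' to compare against; what follows is an assessment of your argument on its own merits.

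Your forward implication is correct and cleanly written: thickening each member of a $\Lambda$-disjoint decomposition by $L'$ produces a cover of multiplicity at most $n+1$ with the required Lebesgue property and uniform bound $L' \circ M \circ L'$.

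The converse, however, is not a proof but an outline with a genuine gap. You never actually define $c(x)$ or $P(x)$: ``a level at which the chain stabilizes'' is ambiguous (there may be several), and ``an intersection of members of $\cU$ that deeply contain $x$ at that level'' does not specify which intersection. More seriously, the $L$-disjointness claim does not follow from what you have written. The displayed inclusion $S_{t+1}(x)\subseteq S_t(y)$ relates the chains at $x$ and $y$ when $(x,y)\in L$; but $L$-disjointness of $\cV_c$ requires that whenever $a\in P(x)$, $b\in P(y)$ with $(a,b)\in L$ and $c(x)=c(y)=c$, one has $P(x)=P(y)$. A point $a\in P(x)$ need not satisfy $c(a)=c$ or $S_t(a)=S_t(x)$, so you cannot feed $(a,b)$ into the displayed fact and conclude anything about $S_\bullet(x)$ versus $S_\bullet(y)$. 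Making this work requires either building enough extra depth into $P(x)$ so that every point of $P(x)$ inherits the relevant stabilization (which forces a careful choice of $N$ and of the ``level'', not just pigeonhole), or switching to an argument organized around the cover members rather than around points. Your closing remark that one may ``alternatively'' invoke \cite[Theorem~9]{Grave} directly is circular: that is precisely the statement you are asked to prove.
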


Let $G$  be a group and let $A, B, K$ be subsets of $G$.
We say that $A$ and $B$ are {\it $K$-disjoint} if $(B^{-1}A) \cap K = \emptyset$.
We say that a collection $\cP$ of subsets of $G$ is  $K$-disjoint if 
for  every $A,~B \in \cP$  such that $A \neq B$ the sets $A$ and $B$ are $K$-disjoint.

In the context of groups with the compatible coarse structures, Definition \ref{asdim_def} can be  reformulated as follows.

\begin{proposition}
\label{prop_asdim_groups}
Let $G$ be a group and $\cF$ a generating family.
If for every $K \in \cF$ there is a cover
$\cU$ of $G$ such that:
\begin{enumerate}

\item  $\cU = {\cU}_0 \cup \cdots  \cup \ {\cU}_n$, 

\item  $\cU_i$ is $K$-disjoint for each index $i$,

\item  $\cU$ is uniformly bounded  (see Remark {\rm \ref{remark_asdim_groups}}),
\end{enumerate}
then $\asdim(G,\cE_{\cF}) \leq n$.
Conversely, if $(G, \cE_{\cF})$ is connected and  $\asdim(G,\cE_{\cF}) \leq n$ then for every $K \in \cF$ there is a cover
$\cU$ of $G$ satisfying Conditions {\rm (1)},  {\rm (2)} and  {\rm (3)}.
\end{proposition}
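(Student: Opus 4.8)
The plan is to translate between the two notions of ``$L$-disjoint'' and ``$K$-disjoint'' via the shear map and the description of entourages in $\cE_{\cF}$ provided by Proposition \ref{assoc_coarse}. The key observation is that a subset $L \subset G \times G$ is $G$-invariant precisely when $L = G(\pi_G(L) \times \{1\})$, i.e. $L$ is determined by $\pi_G(L)$; and for $G$-invariant $L$ with $K = \pi_G(L)$, two subsets $A, B \subset G$ satisfy $(A \times B) \cap L = \emptyset$ if and only if $B^{-1}A \cap K = \emptyset$, which is exactly the group-theoretic notion of ``$K$-disjoint'' defined just above the proposition. So the ``disjointness'' conditions match up once we restrict attention to $G$-invariant entourages, and every entourage of $\cE_{\cF}$ is contained in a $G$-invariant one of the form $G(K \times K)$ with $K \in \cF$ (or, after symmetrizing, $G(K' \times \{1\})$ with $K' = K^{-1}K \cup KK^{-1}$ say). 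Similarly, a cover $\cU$ is uniformly bounded in the sense of Definition \ref{asdim_def} if and only if there is $M \in \cF$ with $A^{-1}A \subset M$ for all $A \in \cU$, by Proposition \ref{prop_bounded} applied to the uniform bound $M$.

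First I would prove the forward implication. Given that for every $K \in \cF$ there is a cover $\cU = \cU_0 \cup \cdots \cup \cU_n$ with each $\cU_i$ being $K$-disjoint (group sense) and $\cU$ uniformly bounded in the group sense, I must verify $\asdim(G, \cE_{\cF}) \leq n$ using Definition \ref{asdim_def}. Let $L \in \cE_{\cF}$ be an arbitrary entourage. By Proposition \ref{assoc_coarse} there is $A_0 \in \cF$ with $L \subset G(A_0 \times A_0)$; set $K = A_0 A_0^{-1} \cup A_0^{-1} A_0 \in \cF$ (using axioms (b), (c), (d) of Definition \ref{generating_family}), so that $L \subset G(A_0 \times \{1\}) \circ \dots$; more directly, I claim $L(x) \subset xK$ and more generally if $(a,b) \in L$ then $b^{-1}a \in \pi_G(G(A_0 \times A_0)) = A_0^{-1}A_0 \subset K$. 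Apply the hypothesis to this $K$ to get a cover $\cU = \cU_0 \cup \cdots \cup \cU_n$. Each $\cU_i$ is $K$-disjoint in the group sense, hence (by the equivalence above, noting $L \subset G(\pi_G(L) \times \{1\})$ with $\pi_G(L) \subset K$) for distinct $A, B \in \cU_i$ the sets $A \times B$ and $L$ are disjoint, i.e. $\cU_i$ is $L$-disjoint in Roe's sense. And uniform boundedness in the group sense (an $M \in \cF$ with $A^{-1}A \subset M$ for all $A \in \cU$) gives the uniform bound $\bigcup_{A \in \cU}(A \times A) \subset G(M' \times M')$ for suitable $M' \in \cF$, which lies in $\cE_{\cF}$; this is where Proposition \ref{prop_bounded} (or a direct computation) is used.

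For the converse, assume $(G, \cE_{\cF})$ is connected and $\asdim(G, \cE_{\cF}) \leq n$. Fix $K \in \cF$. Form the entourage $L = G(K' \times K')$ where $K' = K \cup K^{-1} \cup \{1\} \in \cF$, so $L \in \cE_{\cF}$ and $K \subset K'^{-1}K' = \pi_G(L)$. Apply Definition \ref{asdim_def} to get a cover $\cU = \cU_0 \cup \cdots \cup \cU_n$ that is $L$-disjoint in each $\cU_i$ and uniformly bounded. By the equivalence, each $\cU_i$ is then $\pi_G(L)$-disjoint, hence $K$-disjoint (since $K \subset \pi_G(L)$), in the group sense. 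Uniform boundedness of $\cU$ in Roe's sense, together with connectedness and Proposition \ref{prop_bounded}, yields $M \in \widehat{\cF}$ with $A^{-1}A \subset M$ for every $A \in \cU$, which is exactly condition (3) in the group sense. The main obstacle I anticipate is bookkeeping around the asymmetry of $\pi_G(L)$ versus $K$ (one has to enlarge $K$ to a symmetric set containing $1$ to go one direction, and shrink in the other) and making sure the uniform-bound translation genuinely uses connectedness in the converse — without connectedness, ``bounded'' need not coincide with ``in $\widehat{\cF}$'', which is why that hypothesis appears only in the converse direction. Everything else is a routine unwinding of the shear-map identities already established in Propositions \ref{every_coarse}, \ref{prop_genfamtwo}, and \ref{prop_bounded}.
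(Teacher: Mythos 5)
Your argument is correct and essentially the same as the paper's: both directions come down to the shear-map identity $\pi_G\bigl((A\times B)\cap L\bigr)=(B^{-1}A)\cap \pi_G(L)$ for $G$-invariant $L$, taking $K=(K')^{-1}K'$ in the forward direction and enlarging $K$ to a set $\widetilde K$ with $1\in\widetilde K$ (so that $K\subset \widetilde K^{-1}\widetilde K$) in the converse, which is precisely where connectedness is used. Two minor points: $K\cup K^{-1}\cup\{1\}$ need only lie in $\widehat{\cF}$ rather than $\cF$ (harmless, since $\cE_{\cF}=\cE_{\widehat{\cF}}$, but it is exactly the step requiring Proposition \ref{prop_connected}), and your back-and-forth translation of uniform boundedness is unnecessary because condition (3) is verbatim the condition appearing in Definition \ref{asdim_def}.
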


\begin{proof}
Let $L \in \cE_{\cF}$. Then $L \subset G(K' \times K')$ for some $K' \in \cF$.    Let $K = (K')^{-1}K'$.   Note that $K \in \cF$.
Let $\cU$ be a collection of subsets of $G$ satisfying Conditions (1), (2) and (3) for $K$.
Let $A,~B \in \cU_i$ with $A \neq B$. Then
\[
\pi_G((A \times B) \cap L) \subset  \pi_G((A \times B) \cap G(K' \times K') ) = (B^{-1}A) \cap ((K')^{-1}K )= (B^{-1}A) \cap K = \emptyset.
\]
Hence $(A \times B) \cap L = \emptyset$ and so the $\cU_i$'s are $L$-disjoint.  
Hence $\asdim(G,\cE_{\cF}) \leq n$.

For the converse, assume that $(G, \cE_{\cF})$ is connected and  $\asdim(G,\cE_{\cF}) \leq n$.
Let $ K \in \cF$.
By Proposition \ref{prop_connected}, $\{1\} \in \widehat{\cF}$ and so $K \cup \{1\} \subset \widetilde{K}$ for some  $\widetilde{K} \in \cF$.
Let $L = G( \widetilde{K} \times  \widetilde{K})$.
Note that $ L \in \cE_{\cF}$.
Let $\cU$ be a family of subsets of $G$ satisfying Conditions (1), (2) and (3) in Definition \ref{asdim_def}.
For each index $i$ and for every $A,~B \in \cU_i$ with $A \neq B$ we have  $(A \times B) \cap L = \emptyset$.  
Hence $(B^{-1}A) \cap (\widetilde{K})^{-1}\widetilde{K} = \pi_G(( A \times B) \cap L )= \emptyset$.  
Since $K \subset (\widetilde{K} )^{-1}\widetilde{K}$, it follows that $(B^{-1}A) \cap K =\emptyset$. 
\end{proof}

\begin{remark}
\label{remark_asdim_groups}
In Proposition \ref{prop_asdim_groups},  Condition (3)  (that $\cU$ is uniformly bounded) implies the condition:
\begin{enumerate}
\item[($3'$)]   There exists $F \in \cF$ such that for all  $A \in \cU$, \,$A^{-1}A \subset F$.
\end{enumerate}
If $(G, \cE_{\cF})$ is connected then Condition ($3'$) implies Condition (3).
\end{remark}

\begin{proposition}
\label{asdim_characterizations}
Let $G$ be a group and $\cF$ a generating family on $G$.  Assume $(G, \cE_{\cF})$ is connected.
Let $n$ be a non-negative integer.
The statements (A), (B) and (C) below are all equivalent to the assertion that $\asdim(G, \cE_{\cF}) \leq n$.
\end{proposition}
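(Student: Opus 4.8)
The plan is to prove a cycle of implications $\asdim(G,\cE_{\cF})\le n\Rightarrow(A)\Rightarrow(B)\Rightarrow(C)\Rightarrow\asdim(G,\cE_{\cF})\le n$, with the earlier results doing most of the work. Proposition \ref{prop_asdim_groups}, together with Remark \ref{remark_asdim_groups}, is the bridge between the coarse-space definition of $\asdim$ and the group-theoretic language of $K$-disjoint, uniformly bounded covers indexed by $K\in\cF$, while Grave's Theorem \ref{theorem_of_Grave} supplies the multiplicity/Lebesgue-number reformulation of $\asdim$ that underlies the remaining characterizations. The connectedness hypothesis on $(G,\cE_{\cF})$ is used repeatedly: by Proposition \ref{prop_connected} it guarantees $\{g\}\in\widehat{\cF}$ for every $g\in G$, hence (Corollary \ref{cor_connected}) that $\widehat{\cF}$ is stable under left and right translation, and it is exactly what makes condition $(3')$ of Remark \ref{remark_asdim_groups} equivalent to genuine uniform boundedness of a cover; it also lets one replace any $K\in\cF$ by a symmetric $\widetilde{K}\in\cF$ with $\{1\}\subset\widetilde{K}$.

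First I would dispatch $\asdim(G,\cE_{\cF})\le n\Leftrightarrow(A)$: this is essentially a restatement of Proposition \ref{prop_asdim_groups}, the only point being that for $L=G(\widetilde{K}\times\widetilde{K})$ with $\widetilde{K}\in\cF$ symmetric one has $\pi_G\big((A\times B)\cap L\big)=(B^{-1}A)\cap(\widetilde{K}^{-1}\widetilde{K})$, so $L$-disjointness of a subfamily translates to $\widetilde{K}$-disjointness in the group sense and back. Next, to pass from the $K$-disjoint form $(A)$ to the bounded-multiplicity form $(B)$, given $K\in\cF$ I would produce, from $(A)$ applied to a suitably enlarged $K'\in\cF$, a cover $\cU=\cU_0\cup\cdots\cup\cU_n$ with each $\cU_i$ being $K'$-disjoint and with $A^{-1}A\subset F$ for all $A\in\cU$ and some $F\in\cF$; then thicken, replacing each $A$ by $A\,K''$ for a $K''\in\cF$ manufactured from $K$ and $F$ using closure of $\cF$ under products, inverses, and finite unions (Definition \ref{generating_family}). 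With $K'$ chosen so that (after absorbing translates) $K'$-disjointness forces the thickened subfamily to stay pairwise disjoint, the thickened cover has multiplicity $\le n+1$, the thickening yields the required Lebesgue-type condition relative to $K$, and $A^{-1}A\subset F$ propagates to a uniform bound. For the converse $(B)\Rightarrow(A)$ I would use the standard recoloring argument (as in the proof of Theorem \ref{theorem_of_Grave} or the metric case): a uniformly bounded cover of multiplicity $\le n+1$ satisfying the Lebesgue condition for a large member of $\cF$ is $(n+1)$-colored via its nerve, the Lebesgue condition guaranteeing the color classes are $K$-disjoint. Finally, $(C)$ — the formulation via a coarsely uniform map to an at most $n$-dimensional nerve-type complex — is obtained from $(B)$ by a partition of unity subordinate to $\cU$ and the induced map $G\to N(\cU)$, and conversely from $(C)$ by pulling back the open stars of the vertices of the target complex; the uniform bound on $\cU$ and Theorem \ref{theorem_of_Grave} make both translations routine.

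I expect the main obstacle to be purely bookkeeping: at each thickening or coloring step one must produce, and keep track of, the precise element of $\cF$ controlling the scale, so that ``$K$-disjoint'' survives enlargement and ``uniformly bounded'' (equivalently $(3')$) is preserved. In the metric setting this is the familiar game of comparing radii $R,2R,3R,\dots$; here the radii are replaced by members of $\cF$ and the inequalities by inclusions coming from Definition \ref{generating_family}(b)--(d) and from $\{g\}\in\widehat{\cF}$ (Proposition \ref{prop_connected}), so the estimates are conceptually identical but notationally heavier. The only genuinely non-combinatorial ingredient is the map-to-the-nerve direction for $(C)$, where one checks that the combinatorial metric on $N(\cU)$ pulls back along the partition-of-unity map to an $\cF$-controlled entourage; this follows from the uniform bound on $\cU$, but it is the step where the exact phrasing of $(C)$ matters most.
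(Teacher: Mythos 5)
Your overall architecture --- the cycle $\asdim(G,\cE_{\cF})\le n\Leftrightarrow(A)$ via Proposition \ref{prop_asdim_groups}, then $(A)\Rightarrow(B)\Rightarrow(C)\Rightarrow\asdim\le n$ with Grave's Theorem \ref{theorem_of_Grave} closing the loop --- is exactly the paper's. The problem is that you have misread what (B) and (C) actually assert, so the arguments you sketch are aimed at different statements. Statement (B) contains no Lebesgue-type condition: it asks only for a uniformly bounded cover $\cV$ such that each translate $gK$ meets at most $n+1$ members of $\cV$. Consequently $(A)\Rightarrow(B)$ needs no thickening at all: apply (A) with $\widetilde K=K^{-1}K$; if two distinct members $U_1\neq U_2$ of the same $\widetilde K$-disjoint family $\cU_i$ both met $gK$, one would have $(U_2^{-1}U_1)\cap\widetilde K\neq\emptyset$, contradicting $\widetilde K$-disjointness, so at most one member of each of the $n+1$ color classes meets $gK$. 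The thickening $A\mapsto AK''$ that you describe is really the content of the \emph{next} step, $(B)\Rightarrow(C)$, where the paper takes $\cW=\{VK\mid V\in\cV\}$ with $\cV$ chosen for $K^{-1}$: the number of $V\in\cV$ meeting $gK^{-1}$ bounds the multiplicity of $\cW$ at $g$, and every $gK$ is contained in some $VK$.

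More seriously, statement (C) is not a characterization by coarsely uniform maps to an $n$-dimensional nerve; it is simply the cover-theoretic reformulation of Theorem \ref{theorem_of_Grave} with entourages replaced by sets $K\in\cF$: a uniformly bounded cover of multiplicity at most $n+1$ in which every translate $gK$ is contained in some member. Hence $(C)\Rightarrow\asdim\le n$ is an immediate citation of Grave's theorem (given $L=G(K'\times K')$, set $K=(K')^{-1}K'$ so that $L(g)=gK$), and the partition-of-unity/nerve machinery you single out as ``the only genuinely non-combinatorial ingredient'' does not occur anywhere in the intended proof. As written, your plan would establish a true but different (and standard) nerve-map characterization rather than the equivalences claimed in the proposition; to repair it you need only replace the nerve step by the trivial translation between condition (C) and the hypotheses of Theorem \ref{theorem_of_Grave}, and move your thickening argument from the $(A)\Rightarrow(B)$ slot to $(B)\Rightarrow(C)$.
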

\begin{enumerate}

\item[(A)]  For all $K \in \cF$ there exists a cover $\cU$ of $G$ such that
\begin{enumerate}
\item[(1)]     $\cU = {\cU}_0 \cup \cdots  \cup \ {\cU}_n$, 

\item[(2)]     $\cU_i$ is $K$-disjoint for each index $i$,

\item[(3)]    $\cU$ is uniformly bounded.
\end{enumerate}

\item[(B)]  For all $K \in \cF$ there exists a cover $\cV$ of $G$ such that:
\begin{enumerate}
\item[(1)]     For each $g \in G$ at most $n+1$ elements of $\cV$ meet $gK$,

\item[(2)]    $\cV$ is uniformly bounded.
\end{enumerate}

\item[(C)]  For all $K \in \cF$ there exists a cover $\cW$ of $G$ such that
\begin{enumerate}
\item[(1)]     $\cW$ has multiplicity less than or equal to $n+1$,

\item[(2)]    For all $g \in G$ there exists $W \in \cW$ such that $gK \in W$,

\item[(3)]    $\cW$ is uniformly bounded.
\end{enumerate}

\end{enumerate}

\begin{proof}    The equivalence of (A) and the assertion $\asdim(G, \cE_{\cF}) \leq n$ is Proposition \ref{prop_asdim_groups}.

\smallskip

\noindent
{\it Proof that (A) implies (B)}.
Let $K \in \cF$ be given.  Let ${\widetilde K} = K^{-1} K$.
Assuming (A), there exists a cover $\cU = {\cU}_0 \cup \cdots  \cup \ {\cU}_n$ of $G$ which is uniformly bounded and such that
each ${\cU}_i$ is ${\widetilde K}$-disjoint.
Let $g \in G$ and let $U_1, \, U_2 \in \cU$ be such that $U_1 \neq U_2$ and $(gK) \cap U_1 \neq \emptyset$ and  $(gK) \cap U_2 \neq \emptyset$.
Then $(U_2^{-1}U_1)  \cap  {\widetilde K} \neq \emptyset$. 
Hence $U_1$ and $U_2$ are not $K$-disjoint and so cannot belong to the same ${\cU}_i$.
It follows that at most $n+1$ elements of $\cU$ meet $gK$.

\smallskip

\noindent
{\it Proof that (B) implies (C)}.   
Let $K \in \cF$ be given.  Assuming (B),  there exists a uniformly bounded cover $\cV$ such that for any $g \in G$ at most $n+1$ elements of $\cV$ meet $gK^{-1}$.
Define $\cW = \{ VK ~\big|~ V \in \cV \}$.
Clearly, $\cW$ is a uniformly bounded cover of $G$.
Let $g \in G$.  Then $g \in VK$ for some $V \in \cV$ and there exists $h \in V$ such that $g \in hK$.  It follows that $h \in V \cap (gK^{-1})$ and thus
$V \cap (gK^{-1}) \neq \emptyset$.
Since there are at most $n+1$ sets $V \in \cV$ which meet $gK^{-1}$, it follows that there are at most $n+1$ sets $VK \in \cW$  containing $g$, 
that is, the multiplicity of $\cW$ is less than or equal to $n+1$.
Since $\cV$ covers $G$, any set of the form $gK$ is contained in some $VK \in  \cW$.

\smallskip

\noindent
{\it Proof that (C) implies (A)}.   Let $L \in \cE_{\cF}$ be given.  We may assume $L$ is of the form $L = G(K' \times K')$ where $K' \in \cF$.
Let $K=( K')^{-1} K'  \in \cF$.  
Assuming (C), there exists a uniformly bounded cover $\cW$ of $G$ with multiplicity less than or equal to $n+1$ such that for every $g \in G$
there exists $W \in \cW$ such that $L(g) = g( K')^{-1} K'   = gK \subset W$. Statement (A) now follows from Theorem \ref{theorem_of_Grave} and 
Proposition \ref{prop_asdim_groups}.
\end{proof}

We have the following characterization of groups with asymptotic dimension zero with respect to a compatible coarse structure.

\begin{theorem}
\label{asdim_zero}
Let $G$ be a group and $\cF$ a generating family on $G$.
Assume that $(G, \cE_{\cF})$ is connected
and that  $G$ has a set, $S$,  of generators (as an abstract group) such that $S \in \widehat{\cF}$.
Then $\asdim(G, \cE_{\cF}) =0$ if and only if $G \in \cF$.
\end{theorem}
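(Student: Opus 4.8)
The plan is to prove the two implications of the biconditional separately; only the converse is substantial.

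\emph{The implication $G \in \cF \Rightarrow \asdim(G, \cE_{\cF}) = 0$.} Given any $K \in \cF$, I would take the one-element cover $\cU = \{G\}$ of $G$. It is vacuously $K$-disjoint, and it is uniformly bounded: a uniform bound is $G(G\times G) = G \times G$, which lies in $\cE_{\cF}$ precisely because $G \in \cF$. By Proposition \ref{prop_asdim_groups} this gives $\asdim(G, \cE_{\cF}) \le 0$, hence $\asdim(G, \cE_{\cF}) = 0$ since $G$ is non-empty.

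\emph{The implication $\asdim(G, \cE_{\cF}) = 0 \Rightarrow G \in \cF$.} First I would build a convenient element of $\cF$. Choosing a non-empty $A \in \cF$ (Definition \ref{generating_family}(a)) we get $1 \in A^{-1}A \in \cF$, and since $S \in \widehat{\cF}$ there is $S' \in \cF$ with $S \subseteq S'$; then $K := (A^{-1}A) \cup S' \cup (S')^{-1}$ lies in $\cF$ by Definition \ref{generating_family}(b)--(d), is symmetric (as $(A^{-1}A)^{-1} = A^{-1}A$), contains $1$, and contains $S \cup S^{-1}$. Because $(G, \cE_{\cF})$ is connected, the converse half of Proposition \ref{prop_asdim_groups} applies with $n = 0$: for this $K$ there is a uniformly bounded, $K$-disjoint cover $\cU$ of $G$. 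By Remark \ref{remark_asdim_groups}, uniform boundedness yields a single $F \in \cF$ with $U^{-1}U \subseteq F$ for every $U \in \cU$. The heart of the argument is then to show $\cU$ has a single member: since $1 \in K$, $K$-disjointness forces distinct members of $\cU$ to be disjoint (if $x \in A \cap B$ for distinct $A, B \in \cU$, then $1 = x^{-1}x \in B^{-1}A \cap K$, a contradiction), so $\cU$ partitions $G$; write $\cU_g$ for the unique member containing $g$. If $g,h \in G$ satisfy $g^{-1}h \in S \cup S^{-1} \subseteq K$, then $g^{-1}h \in (\cU_g)^{-1}\cU_h$, so (using $K = K^{-1}$) $\cU_g$ and $\cU_h$ are not $K$-disjoint, forcing $\cU_g = \cU_h$. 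Since $S$ generates $G$, every $a \in G$ is a product $s_1 \cdots s_m$ with $s_i \in S \cup S^{-1}$, and applying the previous observation along the chain $1, s_1, s_1 s_2, \dots, s_1\cdots s_m = a$ gives $\cU_a = \cU_1$. Hence $\cU = \{\cU_1\}$, and as $\cU$ covers $G$ we get $\cU_1 = G$; therefore $G = G^{-1}G = (\cU_1)^{-1}\cU_1 \subseteq F$, so $G \subseteq F \in \cF$ forces $G = F \in \cF$.

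I expect the main obstacle to be the middle step: correctly reducing the hypothesis $\asdim(G,\cE_{\cF})=0$ to the existence of a $K$-disjoint, uniformly bounded cover via Proposition \ref{prop_asdim_groups} (which is where connectedness enters), and then observing that, because $1 \in K$, such a cover is an honest partition of $G$ which, because $K$ contains a set of generators, must be the trivial partition $\{G\}$. The remaining steps are routine bookkeeping with the generating-family axioms.
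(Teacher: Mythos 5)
Your proof is correct and follows essentially the same route as the paper: reduce via Proposition \ref{prop_asdim_groups} to a uniformly bounded, $K$-disjoint cover for a symmetric $K \in \cF$ containing $1$ and a generating set, observe the cover is a partition, and use the generating property to force the cover to be $\{G\}$. Your chain argument along $1, s_1, s_1s_2, \dots$ is just a repackaging of the paper's observation that each member $B$ satisfies $B = BK$ and hence $B = BK^n = G$.
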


\begin{proof}
If $G \in \cF$ then the singleton set $\cU_0 = \{G\}$ is a uniformly bounded cover of $G$ which is vacuously $L$-disjoint
for any $L \in \cE_{\cF}$ and so $\asdim(G, \cE_{\cF}) =0$.

Assume  that $\asdim(G, \cE_{\cF}) =0$ and that $S \in  \widehat{\cF}$ is a set of generators.  Let $K \in \cF$ be such that $S \subset K$.
Since $(G, \cE_{\cF})$ is connected, we can find such a set $K$ so that $1 \in K$.
Furthermore, by replacing $K$ with $K \cup K^{-1} \in \cF$ we may assume $K$ is symmetric.
By Proposition \ref{prop_asdim_groups},  
there is a uniformly bounded cover, $\cU_0$, of $G$ such that for all $A, B \in \cU_0 $ with $A \neq B$ we have $(B^{-1}A )\cap K = \emptyset$.
This condition on $A$ and $B$ implies  $A \cap (BK) = \emptyset$ and thus also $A \cap B = \emptyset$ because $1 \in K$.
Let $B \in \cU_0$ and let $X$ be the union of all elements of $\cU_0$ other than $B$.
Since $\cU_0$ is a cover of $G$, we have $G = X \cup B$.   
Also $X \cap B = \emptyset$ and $X \cap(BK) = \emptyset$.
Hence  $B = BK$.  It follows that   $B K^n = B$ for any positive integer $n$ (where $K^n = K \cdots K$,  $n$ factors).
Since $K$ is a symmetric set of generators for $G$ and contains $1 \in G$,  we have 
$G = \bigcup_{n \geq 1}  K^n$ and so  $B = G$.
It follows that $\cU_0 = \{G\}$.  
Since $\cU_0$ is uniformly bounded, $G \in \cF$.
\end{proof}

\begin{definition}
The {\it asymptotic dimension} of a Hausdorff topological group $G$, denoted $\asdim(G)$,  is $\asdim(G, \cE_{\cC(G)} )$ where $\cE_{\cC(G)}$ is the group-compact
coarse structure on $G$ as in Example \ref{EX_group_compact}.
\end{definition}

An isomorphism of topological groups is clearly a coarse equivalence with respect to their group-compact coarse structures and hence preserves asymptotic dimension
(since, in general, coarse equivalences of coarse spaces preserve asymptotic dimension).  Also for any Hausdorff topological group $G$, if  $G_{\rm k}$ is the Hausdorff topological group obtained by re-topologizing $G$ with the weak topology determined by its collection of compact subsets then
$\asdim(G) = \asdim(G_{\rm k})$ because $G$ and $G_{\rm k}$ have the same compact subsets (see Remark \ref{group_compact_generalized}(2)).

Theorem  \ref{asdim_zero} immediately  yields:

\begin{corollary} 
\label{asdim_zero_group_compact} 
Let $G$ be a Hausdorff topological group with a compact set of generators.
Then  $\asdim(G) = 0$ if and if only if $G$ is compact.
\end{corollary}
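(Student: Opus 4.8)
The plan is to apply Theorem~\ref{asdim_zero} directly. Take $\cF = \cC(G)$, the generating family of compact subsets, so that $\asdim(G) = \asdim(G, \cE_{\cC(G)})$ by definition. The hypothesis that $G$ has a compact set of generators $S$ gives exactly $S \in \cC(G) \subset \widehat{\cC(G)}$, so the ``set of generators in $\widehat{\cF}$'' condition of Theorem~\ref{asdim_zero} is met. The connectedness of $(G, \cE_{\cC(G)})$ is a separate point I would need to note: since $G$ is covered by $\bigcup_{n\ge 1}(S\cup\{1\}\cup S^{-1})^n$ and each such power is compact, every singleton lies in some compact set, hence $G = \bigcup_{K \in \cC(G)} K$, and Proposition~\ref{prop_connected} gives that $(G,\cE_{\cC(G)})$ is connected. (Alternatively one could invoke the remark following Corollary~\ref{cor_connected} together with Remark, but spelling out the union argument is cleanest here.)

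With those hypotheses verified, Theorem~\ref{asdim_zero} says $\asdim(G,\cE_{\cC(G)}) = 0$ if and only if $G \in \cC(G)$, i.e.\ if and only if $G$ is compact as a topological space. Translating through the definition of $\asdim(G)$ yields the corollary. So the proof is essentially a two-line deduction: check the hypotheses of Theorem~\ref{asdim_zero} hold for $\cF = \cC(G)$, then read off the conclusion, observing that ``$G \in \cC(G)$'' is literally the statement that $G$ is compact.

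I do not anticipate a genuine obstacle here — the work has all been done in Theorem~\ref{asdim_zero}. The only thing one must be slightly careful about is the connectedness hypothesis: it is not automatic for an arbitrary Hausdorff topological group with the group-compact coarse structure, but it does follow from the existence of a compact generating set by the union argument above, so it costs nothing extra. The forward implication ($G$ compact $\Rightarrow \asdim(G) = 0$) is the trivial half of Theorem~\ref{asdim_zero}; the interesting half ($\asdim(G) = 0 \Rightarrow G$ compact) is where the generating-set hypothesis is used, exactly as in the proof of that theorem. Hence the corollary is immediate, and the write-up can simply say ``This follows from Theorem~\ref{asdim_zero} applied to the generating family $\cF = \cC(G)$, noting that a compact generating set makes $(G,\cE_{\cC(G)})$ connected.''
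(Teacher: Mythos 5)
Your proposal is correct and matches the paper, which likewise derives the corollary directly from Theorem~\ref{asdim_zero} applied to $\cF = \cC(G)$. One small remark: connectedness of $(G,\cE_{\cC(G)})$ is in fact automatic for any Hausdorff topological group (singletons are compact, so $G=\bigcup_{K\in\cC(G)}K$ by Proposition~\ref{prop_connected}, as the paper notes after Corollary~\ref{cor_connected}), so your compact-generating-set argument for it, while valid, is not needed.
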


\begin{example}
\label{EX_roots_of_unity}
Let $C$  be the  topological  subgroup of $S^1$ (complex numbers of unit modulus) given by
$C= \{  e^{2\pi i \, m/2^n}  ~\big|~  m, n  \in \Z \}$. 
The set $ \{  e^{2\pi i /2^n}  ~\big|~ n = 0, 1, \dots \}  \subset C$ is a compact set of generators for $C$.
Since $C$ is not compact,  Corollary \ref{asdim_zero_group_compact}  implies $\asdim(C) >0$.
\end{example}

\begin{theorem}
\label{asdim_of_a_subgroup}
Let $G$ be a group and $\cF$ a generating family on $G$. Assume that $(G, \cE_{\cF})$ is connected.
Let $H \subset G$ be a subgroup.
Then $\asdim(H, \cE_{\cF|_H})  \leq \asdim(G, \cE_{\cF})$ .
\end{theorem}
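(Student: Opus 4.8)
The idea is to transport a cover witnessing $\asdim(G,\cE_{\cF})\leq n$ down to $H$ by intersecting with $H$. Suppose $\asdim(G,\cE_{\cF})\leq n$ (otherwise there is nothing to prove). Recall from Proposition \ref{sub_and_quotient}(i) that $\cF|_H=\{A\cap H~|~A\in\cF\}$ is a generating family on $H$; note also that for $K_H\in\cF|_H$ we may write $K_H=K\cap H$ for some $K\in\cF$. First I would observe that $(H,\cE_{\cF|_H})$ need not be connected, so I will verify the hypotheses of Proposition \ref{prop_asdim_groups} directly in its ``non-connected'' direction: it suffices to show that for every $K_H\in\cF|_H$ there is a cover $\cU^H$ of $H$ which is a union of $n+1$ subfamilies $\cU^H_0,\dots,\cU^H_n$, each $K_H$-disjoint, and with $\cU^H$ uniformly bounded. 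Since $(G,\cE_{\cF})$ is connected and $\asdim(G,\cE_{\cF})\leq n$, Proposition \ref{prop_asdim_groups} gives, for the chosen $K\in\cF$ with $K\cap H=K_H$, a cover $\cU=\cU_0\cup\cdots\cup\cU_n$ of $G$ with each $\cU_i$ being $K$-disjoint and $\cU$ uniformly bounded, say $A^{-1}A\subset F$ for all $A\in\cU$ and some $F\in\cF$.

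Next I would define $\cU^H_i=\{A\cap H~|~A\in\cU_i,\ A\cap H\neq\emptyset\}$ and $\cU^H=\cU^H_0\cup\cdots\cup\cU^H_n$. This is a cover of $H$ because $\cU$ covers $G$. For uniform boundedness: if $A\cap H$ is nonempty then $(A\cap H)^{-1}(A\cap H)\subset A^{-1}A\cap H\subset F\cap H\in\cF|_H$, so $\cU^H$ is uniformly bounded with respect to $\cF|_H$. For $K_H$-disjointness of $\cU^H_i$: given distinct $A\cap H$ and $B\cap H$ in $\cU^H_i$ coming from distinct $A,B\in\cU_i$, we have $(B^{-1}A)\cap K=\emptyset$, hence $((B\cap H)^{-1}(A\cap H))\cap(K\cap H)\subset(B^{-1}A)\cap K=\emptyset$, i.e. $A\cap H$ and $B\cap H$ are $K_H$-disjoint. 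The only subtlety is that two \emph{distinct} sets $A,B\in\cU_i$ might have $A\cap H=B\cap H$; but then they collapse to a single element of $\cU^H_i$, so this causes no problem for $K_H$-disjointness (which only constrains pairs of distinct members). With Conditions (1), (2), (3) of Proposition \ref{prop_asdim_groups} verified for $(H,\cE_{\cF|_H})$, that proposition yields $\asdim(H,\cE_{\cF|_H})\leq n$, completing the proof.

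\textbf{Main obstacle.} The only real care needed is that $(H,\cE_{\cF|_H})$ may fail to be connected, so one must use the ``easy'' (sufficient-condition) direction of Proposition \ref{prop_asdim_groups} for $H$ and the ``hard'' (connected) direction for $G$; matching the quantifier ``for every $K_H\in\cF|_H$'' with ``for every $K\in\cF$'' via $K_H=K\cap H$ is routine once one is careful that every element of $\cF|_H$ arises this way. The set-collapsing phenomenon ($A\cap H=B\cap H$ for $A\neq B$) is harmless but worth a remark. Everything else is the straightforward verification sketched above.
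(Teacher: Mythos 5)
Your proof is correct and is essentially the paper's own argument: for a given $\widetilde K = K\cap H \in \cF|_H$, take the cover of $G$ furnished by Proposition \ref{prop_asdim_groups} for $K$, intersect each member with $H$, and check that $K$-disjointness and uniform boundedness (in the form $A^{-1}A\subset F$) pass to the restricted cover. Your worry that $(H,\cE_{\cF|_H})$ might fail to be connected is actually moot---by Proposition \ref{prop_connected}, connectedness of $(G,\cE_{\cF})$ gives $G=\bigcup_{A\in\cF}A$ and hence $H=\bigcup_{A\in\cF}(A\cap H)$---but since you only invoke the sufficient direction of Proposition \ref{prop_asdim_groups} for $H$, this extra caution is harmless.
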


\begin{proof}
If $\asdim(G, \cE_{\cF}) = \infty$ then there is nothing to prove so assume $\asdim(G, \cE_{\cF}) = n$  where $n$ is finite.
Let  ${\widetilde K} \in \cF|_H$.  Then  ${\widetilde K}=  K \cap H$ for some $K \in \cF$.
By Proposition \ref{prop_asdim_groups} there is a cover 
$\cU = {\cU}_0 \cup \cdots  \cup \ {\cU}_n$ of $G$ satisfying Conditions (1), (2) and ($3'$) in that Proposition
(see Remark  \ref{remark_asdim_groups} for ($3'$)).
Let  ${\cU}_i |_H  = \{ U \cap H ~\big|~ U \in {\cU}_i\}$ for $i=0, \ldots, n$.
Then $\cU |_H  = {\cU}_0 |_H \cup \cdots  \cup \ {\cU}_n |_H$ is a cover of $H$ satisfying Conditions (1), (2) and ($3'$)
for ${\widetilde K}$.
Hence, by  Proposition \ref{prop_asdim_groups},  $\asdim(H, \cE_{\cF|_H})  \leq n$.
\end{proof}

If $G$ is a Hausdorff topological group and $H$ is a  closed subgroup then $\cC(G)|_H = \cC(H)$ and so 
Proposition \ref{asdim_of_a_subgroup} yields the following corollary.

\begin{corollary} 
\label{asdim_of_a_closed_subgroup} 
Let $G$ be a Hausdorff topological group  and $H$ a closed subgroup of $G$.
Then $\asdim(H)  \leq \asdim(G).$ \qed
\end{corollary}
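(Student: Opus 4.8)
The statement to be proved is Corollary~\ref{asdim_of_a_closed_subgroup}: if $H$ is a closed subgroup of a Hausdorff topological group $G$, then $\asdim(H) \leq \asdim(G)$. The plan is to deduce this directly from Theorem~\ref{asdim_of_a_subgroup} applied to the generating family $\cF = \cC(G)$ of compact subsets of $G$, together with two routine verifications: that $(G, \cE_{\cC(G)})$ is connected, and that $\cC(G)|_H = \cC(H)$.

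First I would record that $(G, \cE_{\cC(G)})$ is connected. By Proposition~\ref{prop_connected}, this amounts to checking $G = \bigcup_{K \in \cC(G)} K$, which is immediate since every singleton $\{g\}$ is compact (this is exactly the remark made in the paragraph following Corollary~\ref{cor_connected}, where it is noted that the coarse space in Example~\ref{EX_group_compact} is connected). Second, I would verify $\cC(G)|_H = \cC(H)$. By definition $\cC(G)|_H = \{K \cap H : K \in \cC(G)\}$. If $K \subset G$ is compact then $K \cap H$ is a closed subset (in $K$) of a compact set since $H$ is closed in $G$, hence compact; so $\cC(G)|_H \subseteq \cC(H)$. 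Conversely, any compact $L \subseteq H$ is itself compact as a subset of $G$ and satisfies $L = L \cap H$, so $L \in \cC(G)|_H$; thus $\cC(H) \subseteq \cC(G)|_H$. Note also that $\cC(G)|_H$ is nonempty (it contains, e.g., a singleton in $H$), so the hypothesis of Proposition~\ref{sub_and_quotient}(i) is satisfied and $\cC(G)|_H$ is a generating family on $H$.

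With these two facts in hand, the conclusion follows formally: Theorem~\ref{asdim_of_a_subgroup} gives $\asdim(H, \cE_{\cC(G)|_H}) \leq \asdim(G, \cE_{\cC(G)})$, and since $\cC(G)|_H = \cC(H)$ we have $\cE_{\cC(G)|_H} = \cE_{\cC(H)}$, so $\asdim(H) = \asdim(H, \cE_{\cC(H)}) \leq \asdim(G, \cE_{\cC(G)}) = \asdim(G)$.

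Since every ingredient is either an application of an already-proven theorem or an elementary point-set topology observation, there is no real obstacle here; the only thing requiring a moment of care is the identity $\cC(G)|_H = \cC(H)$, which genuinely uses that $H$ is \emph{closed} in $G$ (closed subsets of compact sets are compact). This is precisely why the hypothesis of the corollary demands closedness rather than mere subgroup-hood, and it is the same point invoked just above in the discussion preceding Corollary~\ref{asdim_of_a_closed_subgroup}; so in the write-up I would state it as a one-line remark rather than belabor it.
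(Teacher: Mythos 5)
Your proposal is correct and follows exactly the paper's route: the paper deduces the corollary from Theorem~\ref{asdim_of_a_subgroup} after noting that $\cC(G)|_H = \cC(H)$ for a closed subgroup (the connectedness of $(G,\cE_{\cC(G)})$ having been observed after Corollary~\ref{cor_connected}). Your write-up simply makes these two routine verifications explicit.
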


\begin{example}[Virtually connected Lie groups]
\label{Lie_groups}
Let $G$ be a virtually connected Lie group, and let $K$ be a maximal compact subgroup of $G$.
If $\Gamma$ is a discrete subgroup of $G$, then $G/K$ is a finite dimensional $\Gamma$-CW complex and is a model for the universal proper $\Gamma$-space $\Ebar\Gamma$~\cite{Lueck}.
By~\cite[Section 3]{Carlsson_Goldfarb} and \cite[Proposition 3.3]{Ji}, $\asdim(G/K, \cE_d)=\dim(G/K)$, where $d$ is any $G$-invariant Riemannian metric on $G/K$.
Recall from Remark~\ref{group_compact_generalized}(3) that $G/K$  carries the group-compact coarse structure  $\cE_{\text{$G$-cpt}}$ which is completely 
determined by the $G$-action on $G/K$ and the compact subsets of $G/K$.
By an extension of Theorem~\ref{group-compact_equals_metric} to homogeneous spaces of the form $G/K$ with $K$ compact,
$\asdim(G/K,  \cE_{\text{$G$-cpt}})=\asdim(G/K, \cE_d)$.
By Remark~\ref{not_necessarily_normal}, $\asdim(G)=\asdim(G/K,  \cE_{\text{$G$-cpt}})=\dim(G/K)$.
Furthermore, by Corollary \ref{asdim_of_a_closed_subgroup}, 
the discrete group $\Gamma$ has finite asymptotic dimension less than or equal to $\dim(G/K)$.
\end{example}

The asymptotic dimension of a group with respect to a given compatible coarse structure  is determined by the asymptotic dimensions of a sufficiently large family of subgroups as follows.

\begin{theorem}
\label{asdim_by_subgroups}
Let $G$ be a group and $\cF$ a generating family.
Assume that $(G, \cE_{\cF})$ is connected.
Let $\cH$ be a collection of subgroups of $G$ with the property that for every $K \in \cF$ there exists an $H \in \cH$
such that $K \subset H$.
Then
\[
 \asdim(G, \cE_{\cF} )  ~=~  \sup \left\{  \asdim\left( H, \cE_{\cF|_H} \right)   ~\big|~  H \in \cH \right\}.
 \]
\end{theorem}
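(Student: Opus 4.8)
The plan is to prove the two inequalities separately. The inequality
\[
\sup \left\{ \asdim\left( H, \cE_{\cF|_H} \right) ~\big|~ H \in \cH \right\} ~\leq~ \asdim(G, \cE_{\cF})
\]
is immediate: for each $H \in \cH$, Theorem \ref{asdim_of_a_subgroup} gives $\asdim(H, \cE_{\cF|_H}) \leq \asdim(G, \cE_{\cF})$, and taking the supremum over $H \in \cH$ yields the claim. So the substance is the reverse inequality. If the supremum on the right is infinite there is nothing to prove, so assume it equals some finite integer $n$; I want to show $\asdim(G, \cE_{\cF}) \leq n$. I will verify condition (A) of Proposition \ref{asdim_characterizations} (equivalently the hypothesis of Proposition \ref{prop_asdim_groups}): given $K \in \cF$, I must produce a cover $\cU = \cU_0 \cup \cdots \cup \cU_n$ of $G$ that is uniformly bounded with each $\cU_i$ being $K$-disjoint.

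First I would fix $K \in \cF$ and, using the hypothesis on $\cH$, choose $H \in \cH$ with $K \subset H$. Since $\asdim(H, \cE_{\cF|_H}) \leq n$ and $(H, \cE_{\cF|_H})$ is connected (this needs a quick check: connectivity of $(G,\cE_{\cF})$ plus $K \subset H$ forces $\{1\} \in \widehat{\cF|_H}$, hence $(H,\cE_{\cF|_H})$ is connected by Proposition \ref{prop_connected}, noting $G = \bigcup_{A \in \cF}A$ gives $H = \bigcup_{A\in\cF} (A \cap H)$), Proposition \ref{prop_asdim_groups} applied to $H$ with the element $K \cap H = K \in \cF|_H$ yields a cover $\cV = \cV_0 \cup \cdots \cup \cV_n$ of $H$ which is uniformly bounded and with each $\cV_i$ being $K$-disjoint in $H$; note $K$-disjointness in $H$ and in $G$ coincide since $K \subset H$ and for $A, B \subseteq H$ we have $B^{-1}A \subseteq H$. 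Let $F \in \cF|_H$, so $F = F' \cap H$ with $F' \in \cF$, be a set with $A^{-1}A \subseteq F$ for all $A \in \cV$ (using condition ($3'$) of Remark \ref{remark_asdim_groups}).

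The key step is to \emph{spread the cover of $H$ over all of $G$ by translation}. Decompose $G$ into left cosets of $H$: choose a transversal $T \subseteq G$ so that $G = \coprod_{t \in T} tH$. For each $i$ define
\[
\cU_i = \{ tV ~\big|~ t \in T,\ V \in \cV_i \}
\]
and set $\cU = \cU_0 \cup \cdots \cup \cU_n$. Since each $\cV_i$ covers $H$, the family $\cU_i$ covers $\coprod_{t} tH = G$, so $\cU$ is a cover. Uniform boundedness: for $tV \in \cU$ we have $(tV)^{-1}(tV) = V^{-1}t^{-1}tV = V^{-1}V \subseteq F' $ (even $\subseteq F$), so $\cU$ satisfies ($3'$) with the single element $F' \in \cF$, hence is uniformly bounded by Remark \ref{remark_asdim_groups} since $(G,\cE_{\cF})$ is connected. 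The main obstacle, and the only genuinely delicate point, is $K$-disjointness of $\cU_i$: I must show that if $tV, t'V' \in \cU_i$ are distinct then $(V'^{-1}t'^{-1}tV) \cap K = \emptyset$. If $tH \neq t'H$ then $t'^{-1}t \notin H$, so every element of $V'^{-1}t'^{-1}tV$ lies outside $H$ (as $V, V' \subseteq H$), hence outside $K \subseteq H$; disjointness is automatic. If $tH = t'H$ we may take $t = t'$, and then the sets are $tV$ and $tV'$ with $V \neq V'$ both in $\cV_i$, and $(tV')^{-1}(tV) = V'^{-1}V$, which misses $K$ precisely because $\cV_i$ is $K$-disjoint in $H$. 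Thus each $\cU_i$ is $K$-disjoint, condition (A) of Proposition \ref{asdim_characterizations} holds, and $\asdim(G, \cE_{\cF}) \leq n$, completing the proof.

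I expect the bookkeeping around transversals and the two-case split in the $K$-disjointness verification to be the part requiring care; everything else is a direct translation argument exploiting that left translation by a fixed group element is compatible with the coarse structure (indeed $(gA)^{-1}(gA) = A^{-1}A$, which is why the translated cover stays uniformly bounded with the same controlling set).
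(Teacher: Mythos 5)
Your proof is correct and follows essentially the same route as the paper's: reduce to the finite case, pull a good cover of a suitable $H\in\cH$ containing $K$ from Proposition \ref{prop_asdim_groups}, translate it across a left-coset transversal, and verify $K$-disjointness by the same two-case analysis ($tH\neq t'H$ forces $V'^{-1}t'^{-1}tV$ to miss $H\supseteq K$; $t=t'$ reduces to $K$-disjointness inside $H$). Your explicit check that $(H,\cE_{\cF|_H})$ is connected, needed to invoke the converse direction of Proposition \ref{prop_asdim_groups}, is a detail the paper leaves implicit.
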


\begin{proof}
Our method of proof is motivated by the proof of \cite[Theorem 2.1]{Dranish_and_Smith}.

By Proposition \ref{asdim_of_a_subgroup},
$\sup \left\{  \asdim\left( H, \cE_{\cF|_H} \right)   ~\big|~  H \in \cH \right\} \leq  \asdim(G, \cE_{\cF} )$.
If the left side of this inequality is infinite then there is nothing more to prove so we may assume that
$n = \sup \left\{  \asdim\left( H, \cE_{\cF|_H} \right)   ~\big|~  H \in \cH \right\}$ is finite.

Let $K \in \cF$.  By hypothesis, there exists $H \in \cH$ such that $K \subset H$.
Since $ \asdim\left( H, \cE_{\cF|_H} \right)   \leq n$, 
by Proposition \ref{prop_asdim_groups}
there exists a cover  $\cU = {\cU}_0 \cup \cdots  \cup \ {\cU}_n$ of $H$  such that for some
$L \in \cF$  we have $U^{-1} U \subset L \cap H$ for all $U \in \cU$ and each $\cU_i$ is $K$-disjoint, that is,
$A, B \in \cU_i$ and $A \neq B$ implies $(B^{-1}A) \cap K = \emptyset$.

Let $Z$ be a set of left coset representatives of $H$ in $G$ (hence $G$ is a disjoint union of  the sets $gH$,  $g \in Z$).
Define
$
\cV_i  ~=~ \{ gU ~\big|~ U \in \cU_i  \text{ and } g \in Z \}
$
for $ i=0,\ldots,n$.
Clearly,  $\cV = {\cV}_0 \cup \cdots  \cup \ {\cV}_n$ is a cover of $G$. 
Since for $g \in Z$ and $U \in \cU$, we have $(gU)^{-1}(gU) = U^{-1}U \subset L \cap H \subset L$ and so $\cV$ is uniformly bounded.

We claim that each $\cV_i$ is $K$-disjoint.   For a given $i$, let $gU,  g'U' \in \cV_i$ where $U, U' \in \cU_i$ and $g, g' \in Z$.
Assume that $gU \neq g'U'$.
If $g=g'$ then $U \neq U'$ and so  $((g'U')^{-1}(gU)) \cap K =  ((U')^{-1}U) \cap K = \emptyset$.
If $g \neq g'$ then  $(gH) \cap (g'H) = \emptyset$ because $g, g'$ are representatives of distinct left cosets.
Since $K \subset H$, we have
\[
((g'U')^{-1}(gU)) \cap K \subset ((g'H)^{-1}(gH)) \cap H = \emptyset.
\]
Hence $\cV_i$ is $K$-disjoint. 
By Proposition \ref{prop_asdim_groups},  $\asdim(G, \cF) \leq n$ and so equality holds since the opposite inequality was previously established.
\end{proof}

Given a group $G$ and a subset $S \subset G$,  let  $\langle S \rangle$ denote the subgroup of  $G$ generated by $S$.

\begin{corollary}
\label{asdim_by_subgroups_group_compact}
Let $G$ be a Hausdorff topological group.
Then
\[
 \asdim(G)  ~=~  \sup \left\{  \asdim\left(\overline{\langle K \rangle} \right) ~\big|~  K \in \cC(G)  \right\}
 \]
where $\overline{\langle K \rangle}$ is the closure of ${\langle K \rangle}$ in $G$.
\end{corollary}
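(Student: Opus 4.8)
The plan is to apply Theorem~\ref{asdim_by_subgroups} to the group-compact coarse structure $\cE_{\cC(G)}$ on $G$, with the collection of subgroups chosen to be $\cH = \{\,\overline{\langle K \rangle} \mid K \in \cC(G)\,\}$. First I would check the hypotheses of that theorem. Connectedness of $(G, \cE_{\cC(G)})$ follows from Proposition~\ref{prop_connected} since every point of $G$ lies in some compact set (e.g.\ a singleton) and $\cC(G)$ is closed under finite unions; so $G = \bigcup_{K \in \cC(G)} K$. Next, the key covering hypothesis: for every $K \in \cC(G)$ there must exist $H \in \cH$ with $K \subset H$. This is immediate because $K \subset \langle K \rangle \subset \overline{\langle K \rangle}$, and $\overline{\langle K \rangle}$ is by construction a member of $\cH$. (Note that $\overline{\langle K \rangle}$ is indeed a subgroup of $G$: the closure of a subgroup of a topological group is a subgroup.)

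With the hypotheses verified, Theorem~\ref{asdim_by_subgroups} gives
\[
\asdim(G) = \asdim(G, \cE_{\cC(G)}) = \sup\left\{ \asdim\left( \overline{\langle K \rangle}, \cE_{\cC(G)|_{\overline{\langle K \rangle}}} \right) ~\big|~ K \in \cC(G) \right\}.
\]
The remaining point is to identify $\cE_{\cC(G)|_{\overline{\langle K \rangle}}}$ with the group-compact coarse structure on the topological group $\overline{\langle K \rangle}$, i.e.\ to show $\asdim\big(\overline{\langle K \rangle}, \cE_{\cC(G)|_{\overline{\langle K \rangle}}}\big) = \asdim\big(\overline{\langle K \rangle}\big)$. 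Since $H := \overline{\langle K \rangle}$ is a \emph{closed} subgroup of $G$, a compact subset of $G$ contained in $H$ is compact in $H$, and conversely every compact subset of $H$ is compact in $G$; hence $\cC(G)|_H = \{C \cap H \mid C \in \cC(G)\} = \cC(H)$. This is exactly the identity invoked just before Corollary~\ref{asdim_of_a_closed_subgroup}, so $\cE_{\cC(G)|_H} = \cE_{\cC(H)}$, which is the group-compact coarse structure on $H$ by definition, and therefore $\asdim(H, \cE_{\cC(G)|_H}) = \asdim(H)$.

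Substituting this identification into the displayed equation yields
\[
\asdim(G) = \sup\left\{ \asdim\left(\overline{\langle K \rangle}\right) ~\big|~ K \in \cC(G) \right\},
\]
which is the claim. There is no real obstacle here: the statement is essentially a specialization of Theorem~\ref{asdim_by_subgroups}, and the only things to be careful about are (a) confirming that $\overline{\langle K \rangle}$ is a genuine subgroup so that it qualifies as an element of $\cH$, and (b) the restriction-of-coarse-structure bookkeeping $\cC(G)|_H = \cC(H)$ for $H$ closed, both of which are routine. One might add a remark that, as in Remark~\ref{group_compact_generalized}(2), replacing $G$ by $G_{\mathrm k}$ does not affect either side, so the formula is insensitive to passing to the $k$-ification.
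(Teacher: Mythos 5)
Your proposal is correct and follows exactly the paper's own route: apply Theorem~\ref{asdim_by_subgroups} to the collection $\cH = \{\,\overline{\langle K \rangle} \mid K \in \cC(G)\,\}$ and use the identification $\cC(G)|_{\overline{\langle K \rangle}} = \cC\bigl(\overline{\langle K \rangle}\bigr)$ for the closed subgroup $\overline{\langle K \rangle}$. The only difference is that you spell out the hypothesis checks (connectedness, the covering condition, closure of a subgroup being a subgroup) that the paper leaves implicit.
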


\begin{proof}
The collection $\cH = \big\{  \overline{\langle K \rangle} ~|~  K \in \cC(G) \big\}$ clearly satisfies the hypothesis of Theorem \ref{asdim_by_subgroups} and furthermore
$\cC(G)|_{\overline{\langle K \rangle} } = \cC\left(\overline{\langle K \rangle} \right)$ since $\overline{\langle K \rangle}$ is a closed subgroup of $G$.
\end{proof}

We have the following estimate for the asymptotic dimension of an extension.

\begin{theorem}[Extension Theorem]
\label{extension_theorem}
Let $1\to N\xrightarrow{i}  G\xrightarrow{\pi} Q\to 1$ be an extension of groups.   Let $\cF$ be a generating family on $G$.
Assume that $(G, \cE_{\cF})$ is connected.
If $\asdim(i(N),\cE_{{\cF}|_{i(N)}})\leq n$ and $\asdim(Q, \cE_{\pi(\cF)})\leq k$  then $\asdim(G, \cE_{\cF})\leq (n+1)(k+1)-1$.
\end{theorem}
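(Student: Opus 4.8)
The plan is to mimic the classical Hurewicz-type proof of the extension theorem for asymptotic dimension (as in Bell--Dranishnikov), adapted to the language of generating families developed above. Fix a generating family element $K \in \cF$ for which we must produce a good cover of $G$. Since $(G,\cE_\cF)$ is connected we may enlarge $K$ so that $1 \in K$ and $K = K^{-1}$. The idea is: first use the bound $\asdim(Q,\cE_{\pi(\cF)}) \le k$ to pull back a controlled cover of $Q$ to a ``coarse fibration'' of $G$ into pieces that are close to cosets of $i(N)$, and then refine each piece using the bound on $\asdim(i(N),\cE_{\cF|_{i(N)}})$, so that the two families of colors multiply to give $(n+1)(k+1)$ colors total.

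The key steps, in order. (1) Apply Proposition~\ref{prop_asdim_groups} (or the characterization (B) in Proposition~\ref{asdim_characterizations}) to $Q$ with the set $\pi(K') \in \pi(\cF)$ for a suitably enlarged $K' \supset K$ in $\cF$: obtain a cover $\cV = \cV_0 \cup \cdots \cup \cV_k$ of $Q$, each $\cV_j$ being $\pi(K')$-disjoint and with $V^{-1}V \subset M_Q$ for all $V \in \cV$, where $M_Q = \pi(L)$ for some $L \in \cF$. (2) For each $V \in \cV$ choose a lift: pick a point in each $V$, lift it to $G$, and set $\widetilde V = \pi^{-1}(V) \cap (\text{bounded neighborhood})$ — more precisely, writing $V \subset \pi(g_V) M_Q$ one takes $\widetilde V = g_V \, i(N) \, (\text{lift of } M_Q)$ intersected with $\pi^{-1}(V)$; the upshot is that $\{\widetilde V\}$ is a cover of $G$, grouped into $k+1$ color classes $\widetilde\cV_j$ each ``$K'$-disjoint'' in the appropriate sense, and each $\widetilde V$ is coarsely a copy of $i(N)$: there is a fixed $P \in \cF$ with $g_V^{-1}\widetilde V \subset i(N) P$. (3) Transport the $\asdim \le n$ cover: since $g_V^{-1}\widetilde V$ sits in a bounded neighborhood of $i(N)$, and $\cF|_{i(N)}$ controls $i(N)$, apply Proposition~\ref{prop_asdim_groups} to $i(N)$ with a generating-family element large enough to absorb $K$ and the ``thickening'' $P$; pull the resulting $n+1$-colored cover $\cW = \cW_0 \cup \cdots \cup \cW_n$ of $i(N)$ back to a cover of each $\widetilde V$, then translate by $g_V$. (4) Index the combined cover by pairs $(j,\ell)$ with $0 \le j \le k$, $0 \le \ell \le n$: the $(j,\ell)$-class consists of the $\ell$-colored refinement pieces coming from the $j$-colored $\widetilde V$'s. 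Check that each such class is $K$-disjoint — if two pieces lie over the same $V$ this follows from $K$-disjointness of $\cW_\ell$ (together with the fact that $g_V$ cancels), and if they lie over different $V, V'$ in the same $\cV_j$ it follows from $\pi(K')$-disjointness of $\cV_j$ after checking that $K \subset \pi^{-1}(\pi(K'))$-type containments behave correctly. Finally verify uniform boundedness of the whole cover from the uniform bounds on $\cW$ and on the thickening $P$, and conclude via Proposition~\ref{prop_asdim_groups} that $\asdim(G,\cE_\cF) \le (n+1)(k+1) - 1$.

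The main obstacle, I expect, is step (2)--(3): making precise the sense in which the preimage pieces $\widetilde V$ are ``uniformly coarsely equivalent to $i(N)$'' purely in the language of generating families, without the metric crutches available in the classical proof. Concretely, one must choose, uniformly in $V$, a bounded set $P \in \cF$ such that the fibers of $\pi$ over $V$ all lie within right-translation by $P$ of a single coset $g_V \, i(N)$; this uses that $\cV$ is uniformly bounded in $Q$ and that $\pi(\cF)$ is exactly the pushforward family, plus connectedness to handle basepoints. Once this ``uniform coarse triviality of the extension over bounded sets'' is nailed down, pulling back the $i(N)$-cover and bookkeeping the two color sets is routine. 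A secondary subtlety is verifying the $K$-disjointness for pieces over distinct cosets in the same $\cV_j$: one needs $K \cap (B'^{-1}B) = \emptyset$ to follow from $\pi(K') \cap (\pi(B')^{-1}\pi(B)) = \emptyset$, which requires having chosen $K'$ with $K \subset K'$ and using that $\pi$ is a homomorphism, so that $\pi(B'^{-1}B) = \pi(B')^{-1}\pi(B)$.
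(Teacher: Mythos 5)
Your proposal is correct and is essentially the paper's argument: a Hurewicz-type proof in which a $(k+1)$-colored cover coming from $Q$ is refined by a $(n+1)$-colored cover of $N$ that is disjoint with respect to a conjugated enlargement of $K$, yielding $(n+1)(k+1)$ colors. The one obstacle you flag --- making the pieces over $Q$ uniformly coarsely trivial --- is handled in the paper by a cleaner device: instead of lifting a cover of $Q$, it invokes the coarse equivalence $(G,\cE_{N\cF})\to(Q,\cE_{\pi(\cF)})$ of Proposition \ref{coarse_equiv_one} to get directly a cover of $G$ by $NK$-disjoint sets $U$ with $U^{-1}U\subset K'N$, which is exactly the uniform statement ($g_U^{-1}U\subset NK'$) that your step (2) aims for; your own sketch ($V\subset\pi(g_V)\pi(L)$ forces $\pi^{-1}(V)\subset g_V\,i(N)\,L$) also works.
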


\begin{proof}
We may assume that $i \colon N ~\ra~G$ is the inclusion of a subgroup.
By Proposition \ref{coarse_equiv_one},
$\phi \colon (G, \cE_{N\cF}) ~\ra~ (Q, \cE_{\phi(\cF)})$ is a coarse equivalence and hence
$\asdim(G, \cE_{N\cF}) = \asdim(Q, \cE_{\pi(\cF)})$.

Let $K \in \cF$ be given.   Since $\asdim(G, \cE_{N\cF})  \leq k$, 
by Proposition \ref{prop_asdim_groups} there exists  $K' \in \cF$  and a cover
 $\cU = {\cU}_0 \cup \cdots  \cup \ {\cU}_k$ of $G$ such that each ${\cU}_i$ is $NK$-disjoint and 
 $U^{-1}U \subset K'N$ for all $U \in \cU$.
 
Since $\asdim(N, \cE_{{\cF}|_N}) \leq n$, 
there exists  $K'' \in \cF$  and a cover
 $\cV = {\cV}_0 \cup \cdots  \cup \ {\cV}_n$ of $N$ such that each ${\cV}_j$ is $K' K (K')^{-1}$-disjoint and 
 $V^{-1}V \subset K'' $ for all $V \in \cV$.

For each $U \in \cU$, choose an element $g_U\in U$.
Given $0\leq i\leq k$ and $0\leq j\leq n$, define $\cW_{ij}=\{(g_UVK')\cap U ~\big|~ U\in \cU_i, V\in \cV_j\}$. 
We will show that $\cW=\bigcup_{i,j}\cW_{ij}$ is a uniformly bounded cover of $G$ such that each $\cW_{ij}$ is $K$-disjoint. 
Let $g \in G$. Then $g \in U$ for some $U \in {\cU}_i$.
We have that $g^{-1}_U g \in U^{-1} U \subset NK'$ and so  $g^{-1}_U g \in VK'$ for some $V \in {\cV}_j$ because $\cV$ covers $N$.
Hence $g = g_U ( g^{-1}_U g) \in (g_U V K') \cap U \in {\cW}_{ij}$ which shows that $\cW$ covers $G$.
If  $W = (g_U VK') \cap U \in {\cW}_{ij}$ then
\[
W^{-1}W \subset (g_U VK')^{-1}(g_U VK') = (K')^{-1}V^{-1}V K'  \subset (K')^{-1}K'' K' \in \cF
\]
and so $\cW$ is uniformly bounded.

Let $A,B\in \cW_{ij}$ with $A\neq B$.
Write $A=(g_{U_A}V_A K')\cap  U_A$ and $B=(g_{U_B}V_B K') \cap  U_B$, where $U_A,U_B\in \cU_i$ and $V_A,V_B\in \cV_j$. 
If $U_A \neq U_B$ then $(U_B^{-1} U_A) \cap (NK) = \emptyset$ because $\cU_i$ is $NK$-disjoint.
Since $(B^{-1}A) \cap K \subset  (U_B^{-1} U_A) \cap (NK)$ it follows that  $(B^{-1}A) \cap K = \emptyset$.
If $U_A= U_B$ then $g_{U_A}=g_{U_B}$ and  $\left( B^{-1}A \right)\cap K\subset \left( (K')^{-1}V_B^{-1}V_A K' \right)\cap K$.
The right side of this inclusion is empty because ${\cV}_j$ is $K' K (K')^{-1}$-disjoint and so $\left(B^{-1}A \right)\cap K = \emptyset$.
Hence  $\cW_{ij}$  is $K$-disjoint.
\end{proof}

\begin{theorem}
\label{extension_of_top_groups}
Let $1\to N\xrightarrow{i}  G\xrightarrow{\pi} Q\to 1$ be an extension of Hausdorff topological groups.  Assume  that $i$ is a homeomorphism onto its image and that
$\pi$ has Property (K)  (see Definition {\rm \ref{Property_K}}).
If $\asdim(N)\leq n$ and $\asdim(Q)\leq k$  then $\asdim(G)\leq (n+1)(k+1)-1$.
\end{theorem}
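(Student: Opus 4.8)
The plan is to deduce Theorem~\ref{extension_of_top_groups} from the abstract Extension Theorem~\ref{extension_theorem} by choosing the right generating family on $G$. We take $\cF = \cC(G)$, the generating family of compact subsets, so that $\cE_{\cF}$ is exactly the group-compact coarse structure on $G$ and $\asdim(G,\cE_{\cF}) = \asdim(G)$. The group-compact coarse structure is connected (this was noted after Corollary~\ref{cor_connected}, since Example~\ref{EX_group_compact} is connected), so the hypothesis of Theorem~\ref{extension_theorem} is satisfied. It then remains to identify the two induced generating families appearing in that theorem, namely $\cF|_{i(N)}$ and $\pi(\cF)$, with the group-compact generating families $\cC(i(N))$ and $\cC(Q)$ respectively, at least up to completion (which by Corollary~\ref{same_coarse} is all that matters for the associated coarse structures and hence for asymptotic dimension).

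First I would handle the normal subgroup. Since $i$ is a homeomorphism onto its image, $i(N)$ is a topological subgroup of $G$ isomorphic to $N$, so $\asdim(i(N)) = \asdim(N) \le n$ provided $i(N)$ carries the group-compact coarse structure $\cE_{\cC(i(N))}$. Now $i(N)$ is closed in $G$ (as the kernel of the continuous map $\pi$ into a Hausdorff group), and for a closed subgroup $H$ of a Hausdorff group one has $\cC(G)|_H = \cC(H)$ — this identification is used repeatedly in the excerpt, e.g. in Corollary~\ref{asdim_of_a_closed_subgroup} and Proposition~\ref{coarse_equiv_cocompact_subgroup}. Hence $\cC(G)|_{i(N)} = \cC(i(N))$ and $\asdim(i(N), \cE_{\cC(G)|_{i(N)}}) = \asdim(N) \le n$.

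Next the quotient. We must show $\widehat{\pi(\cC(G))} = \widehat{\cC(Q)}$. The inclusion $\pi(\cC(G)) \subset \cC(Q)$ is immediate since $\pi$ is continuous and the continuous image of a compact set is compact. The reverse inclusion $\cC(Q) \subset \widehat{\pi(\cC(G))}$ is precisely Property~(K) for $\pi$: every compact $K \subset Q$ equals $\pi(K')$ for some compact $K' \subset G$. Therefore $\widehat{\pi(\cC(G))} = \widehat{\cC(Q)}$, so by Corollary~\ref{same_coarse} the coarse structures $\cE_{\pi(\cC(G))}$ and $\cE_{\cC(Q)}$ coincide, giving $\asdim(Q, \cE_{\pi(\cC(G))}) = \asdim(Q) \le k$.

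With these identifications the hypotheses of Theorem~\ref{extension_theorem} are met with $\cF = \cC(G)$, and its conclusion $\asdim(G, \cE_{\cC(G)}) \le (n+1)(k+1)-1$ is exactly $\asdim(G) \le (n+1)(k+1)-1$. The only point requiring a little care — the ``main obstacle,'' though it is really a routine verification — is the equality $\cC(G)|_{i(N)} = \cC(i(N))$, which needs $i(N)$ to be closed in $G$; this follows because $N$ is the kernel of $\pi$ and $Q$ is Hausdorff, so $\{1\}$ is closed in $Q$ and $i(N) = \pi^{-1}(\{1\})$ is closed in $G$. Everything else is a direct translation through the generating-family formalism already set up in Section~\ref{sec:compatible_coarse_structures}.
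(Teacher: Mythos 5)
Your proposal is correct and follows essentially the same route as the paper: reduce to Theorem~\ref{extension_theorem} with $\cF = \cC(G)$, use closedness of $i(N) = \ker(\pi)$ to identify $\cC(G)|_{i(N)}$ with $\cC(i(N))$, and use Property~(K) to identify $\pi(\cC(G))$ with $\cC(Q)$ (the paper gets the equality $\pi(\cC(G)) = \cC(Q)$ on the nose, so passing to completions is not even needed, but it is harmless). Your explicit checks of connectedness and of why $i(N)$ is closed are details the paper leaves implicit.
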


\begin{proof}
By hypothesis, $\pi$ is continuous and so $i(N)=\ker(\pi)$ is a closed subgroup of $G$ and thus $\cC(i(N)) = \cC(G)|_{i(N)}$. 
Also, because $i\colon N ~\ra~ i(N)$ is an isomorphism of topological groups it is a coarse equivalence (with the group-compact coarse structures) and so
$\asdim(N) = \asdim(i(N))$.
Clearly,  $\pi(\cC(G)) \subset \cC(Q)$.
If $\pi$ has Property (K)  then $\cC(Q) \subset \pi(\cC(G))$ and so 
$\cC(Q) = \pi(\cC(G))$.
The conclusion of the Theorem follows from Theorem \ref{extension_theorem}.
\end{proof}

We give some sufficient conditions for the map $\pi$ in Theorem \ref{extension_of_top_groups} to have Property (K).

\begin{proposition} Let $\pi \colon G ~\ra~ Q$ be a continuous, surjective homomorphism of Hausdorff topological groups.  
Assume that $\pi$ has one of the following properties.
\begin{enumerate}
\item  $\pi$ admits a local cross-section, that is, there exists a non-empty open set $U \subset Q$ and
a continuous map $s \colon U ~\ra~ G$ such that $\pi \circ s$ is the identity map of $U$.

\item  $\pi$ is an open map and $\ker(\pi)$ is locally compact.
\end{enumerate}
Then $\pi$ has Property (K).
\end{proposition}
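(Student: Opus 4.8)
The plan is to prove the two cases separately. Case~(1) will be a direct adaptation of the argument behind Proposition~\ref{local_section}, while Case~(2) will be reduced to Proposition~\ref{locally_compact_Property_K} via the observation that an open continuous bijective homomorphism is a homeomorphism. Throughout, I use that $G$ and $Q$ are Hausdorff so that Property~(K) (Definition~\ref{Property_K}) makes sense.

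For Case~(1), suppose $s\colon U~\ra~G$ is a continuous map on a nonempty open $U\subset Q$ with $\pi\circ s=\id_U$. The first step is to shrink the domain: since $Q$ is a Hausdorff topological group it is regular (cf.\ \cite[Theorem 1.5.6]{Arhangel}), so I can choose a nonempty open $V\subset Q$ with $\overline{V}\subset U$. Given a compact $K\subset Q$, the left translates $\{qV\mid q\in Q\}$ form an open cover of $Q$ (for $x\in Q$, any $q\in xV^{-1}$ has $x\in qV$), so finitely many of them, say $q_1V,\dots,q_nV$, cover $K$; using surjectivity of $\pi$, pick $\tilde q_i\in G$ with $\pi(\tilde q_i)=q_i$. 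I would then set
\[
K'=\bigcup_{i=1}^{n}\tilde q_i\, s\bigl(\overline{V}\cap q_i^{-1}K\bigr).
\]
Each $\overline{V}\cap q_i^{-1}K$ is a closed subset of the compact set $q_i^{-1}K$ (left translation in $Q$ is a homeomorphism), hence compact, and it lies in $\overline{V}\subset U$, so $s$ restricts to a continuous map on it; thus $s(\overline{V}\cap q_i^{-1}K)$ is compact, so is its left translate by $\tilde q_i$ in $G$, and so is the finite union $K'$. Finally, since $\pi$ is a homomorphism and $\pi\circ s=\id_U$,
\[
\pi(K')=\bigcup_{i=1}^{n}q_i\bigl(\overline{V}\cap q_i^{-1}K\bigr)=\bigcup_{i=1}^{n}\bigl(q_i\overline{V}\cap K\bigr),
\]
and since $K=\bigcup_i(q_iV\cap K)\subset\bigcup_i(q_i\overline{V}\cap K)\subset K$, this equals $K$, which is exactly Property~(K) for $\pi$.

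For Case~(2), let $N=\ker(\pi)$; this is a closed subgroup of $G$ (as $Q$ is Hausdorff and $\pi$ is continuous) and is locally compact by hypothesis. Write $\pi=\overline{\pi}\circ p_N$, where $p_N\colon G~\ra~G/N$ is the quotient map and $\overline{\pi}\colon G/N~\ra~Q$ is the induced continuous bijection. Since $p_N$ is an open surjection and $\pi$ is open, $\overline{\pi}$ is open, hence a homeomorphism. By Proposition~\ref{locally_compact_Property_K}, the pair $(G,N)$ has Property~(K), i.e.\ $p_N$ has Property~(K); so for a compact $L\subset Q$ the set $\overline{\pi}^{-1}(L)$ is compact in $G/N$ and therefore admits a compact $L'\subset G$ with $p_N(L')=\overline{\pi}^{-1}(L)$, whence $\pi(L')=\overline{\pi}(p_N(L'))=L$. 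Thus $\pi$ has Property~(K).

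I expect the only real work to be the bookkeeping in Case~(1): one must be careful that the pieces $\overline{V}\cap q_i^{-1}K$ fed into the local cross-section are genuinely compact and lie inside its domain, that left translation in both $G$ and $Q$ preserves compactness, and that enlarging $V$ to $\overline{V}$ (which is what makes the section applicable) does not lose any of $K$ when one computes $\pi(K')$. Case~(2) is routine once one records that an open continuous bijective homomorphism is a homeomorphism, so that it and its inverse carry compact sets to compact sets.
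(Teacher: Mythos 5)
Your proof is correct and follows exactly the route the paper takes: Case~(1) is the ``straightforward modification'' of Proposition~\ref{local_section} that the paper invokes without writing out (you supply the needed details of lifting the translating elements $q_i$ to $\tilde q_i\in G$ via surjectivity and checking $\pi(K')=K$), and Case~(2) is the paper's factorization $\pi=\overline{\pi}\circ p_{\ker(\pi)}$ combined with Proposition~\ref{locally_compact_Property_K}. No gaps; your write-up is simply a more detailed version of the paper's sketch.
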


\begin{proof}
If $\pi$ admits a local cross-section then a
straightforward modification of Proposition \ref{local_section} gives that $\pi$ has Property (K).

If $\pi$ is open then it  factors as $\pi = {\bar \pi}  \circ p_{\ker(\pi)}$, where
${\bar \pi} \colon G/\ker(\pi) ~\ra~ Q$ is a homeomorphism.
If, in addition,  $\ker(\pi)$ is locally compact then it follows from Proposition \ref{locally_compact_Property_K} that
$\pi$ has Property (K).
\end{proof}

%%%%%%%%%%%%%%%%%%%%%%%%%%%%%%%%%%%%%%%%%%%%
%%
%%  Section:  The Asymptotic Dimension of the Free Topological Group
%%
%%%%%%%%%%%%%%%%%%%%%%%%%%%%%%%%%%%%%%%%%%%%

\section{The Asymptotic Dimension of the Free Topological Group}

Given a topological space $X$,  a {\it  free topological group on $X$}  is a pair  $(F_{\rm top}(X), i)$ consisting of a Hausdorff topological group $F_{\rm top}(X)$ together with a continuous map
$i \colon X ~\ra~ F_{\rm top}(X)$ satisfying the following universal property:
For every continuous map $f \colon X ~\ra~ H$ to an arbitrary Hausdorff topological group $H$ there exists a unique continuous homomorphism $F \colon F_{\rm top}(X) ~\ra~ H$ such
that $f = F \circ i$.  A standard argument of category theory shows that if $(F_{\rm top}(X), i)$ exists then it is unique up to a unique isomorphism,
that is, if  $(F'_{\rm top}(X), i')$ also satisfies the defining universal
property then there exists a unique isomorphism of topological groups $\Phi \colon F_{\rm top}(X) ~\ra~   F'_{\rm top}(X)$ such that $\Phi \circ i = i'$.

Markov  (\cite{Markov})  proved that a free topological group,  $(F_{\rm top}(X), i)$,  on a Tychonoff  (``completely regular'')  space $X$ exists and  that $i \colon X ~\ra~ F_{\rm top}(X)$ is a topological
embedding and $i(X)$ algebraically generates  $F_{\rm top}(X)$.  Furthermore,  the discrete group obtained by forgetting the topology on $F_{\rm top}(X)$  is  algebraically a free group generated by $i(X)$.
See \cite[Chapter 7]{Arhangel} for a contemporary exposition of the theory of free topological groups.
 
Henceforth, we will identify $X$ with its image $i(X)$ in $F_{\rm top}(X)$.  
 
 \begin{proposition}
\label{group-compact_equals_bounded_for_freetop}
If $X$ is a compact Hausdorff space then the group-compact coarse structure on $F_{\rm top}(X)$ coincides with the bounded coarse structure associated to the word metric, $d_X$,
determined by the generating set $X \subset F_{\rm top}(X)$.  
\end{proposition}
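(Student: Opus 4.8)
The plan is to deduce the statement from Theorem~\ref{group-compact_equals_metric} applied to the group $G := F_{\rm top}(X)$ and the left invariant word metric $d_X$ determined by the generating set $X$; thus it suffices to verify Conditions (i) and (ii) of that theorem. Set $C := X \cup \{1\} \cup X^{-1} \subseteq G$. Since $X$ is compact and inversion is continuous, $C$ is compact, and for a non-negative integer $n$ one has $\{g \in G : |g|_X \leq n\} = C^n = C\cdots C$ ($n$ factors), which is compact as the image of $C \times \cdots \times C$ under iterated multiplication. Condition (ii) is then immediate: a $d_X$-bounded subset of $G$ is contained in some ball $\{g : |g|_X \leq n\} = C^n$, hence is relatively compact, being a subset of the compact set $C^n$.

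The substantive point is Condition (i): every compact $K \subseteq G$ must be $d_X$-bounded, that is, $K \subseteq C^n$ for some $n$. Here I would invoke the structure theory of free topological groups on compact spaces: when $X$ is compact Hausdorff, $F_{\rm top}(X)$ is a $k_\omega$-space whose $k_\omega$-decomposition is the increasing sequence of compact, closed subspaces $B_n := C^n$ (the words of reduced length $\leq n$); see \cite[Chapter~7]{Arhangel} and \cite{Graev}. Granting this, I would apply the standard fact that a compact subset of an inductive limit $\varinjlim B_n$ of compact Hausdorff spaces along closed inclusions lies inside a single $B_n$. Indeed, if $K \not\subseteq B_n$ for every $n$, choose $y_n \in K \setminus B_n$; the set $S := \{y_n : n \geq 1\}$ is infinite, since each point of $G$ lies in some $B_m$ and hence occurs among the $y_n$ only finitely often, and $S$ meets each $B_m$ in a finite, hence closed, set, so that $S$ (and every subset of $S$) is closed in $G$ by the weak topology. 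Thus $S$ is an infinite closed discrete subset of the compact set $K$, a contradiction. Hence $K \subseteq C^n$ and $|g|_X \leq n$ for all $g \in K$, establishing Condition (i).

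With both conditions verified, Theorem~\ref{group-compact_equals_metric} yields the asserted equality of the group-compact coarse structure on $F_{\rm top}(X)$ with the bounded coarse structure of $d_X$. The only ingredient beyond elementary manipulations with compactness and word length is the $k_\omega$-decomposition of $F_{\rm top}(X)$ for compact $X$, and that is where I expect the real content to sit; note that $F_{\rm top}(X)$ need not be locally compact, so Proposition~\ref{locally_compact_groups} does not apply and this structural fact cannot be circumvented.
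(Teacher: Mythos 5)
Your proof is correct and takes essentially the same approach as the paper: both apply Theorem~\ref{group-compact_equals_metric}, getting Condition~(ii) from the compactness of the balls $(X\cup\{1\}\cup X^{-1})^n$ and Condition~(i) from the fact that every compact subset of $F_{\rm top}(X)$ has bounded word length. The only difference is that the paper simply cites \cite[Corollary 7.4.4]{Arhangel} for this last fact, whereas you derive it from the $k_\omega$-decomposition of $F_{\rm top}(X)$ by the standard closed-discrete-subset argument, which is sound.
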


\begin{proof}
The $d_X$-ball of non-negative integer radius $n$ is $(X \cup \{1\} \cup X^{-1})^n$, which is compact since $X$ is compact.
Hence Condition (ii) of Theorem \ref{group-compact_equals_metric} holds.

Let $K \subset F_{\rm top}(X)$ be compact.
By \cite[Corollary 7.4.4]{Arhangel},  $K \subset  (X \cup \{1\} \cup X^{-1})^n$ for some $n$.
Hence Condition (i) of Theorem \ref{group-compact_equals_metric} holds.
\end{proof}

\begin{corollary}
\label{asdim_of_freetop_on_compact}
If $X$ is a non-empty compact Hausdorff space then $\asdim(F_{\rm top}(X)) =  1$. 
\end{corollary}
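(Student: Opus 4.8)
The plan is to reduce, via Proposition \ref{group-compact_equals_bounded_for_freetop}, to the classical computation of the asymptotic dimension of the free group $F(X)$ equipped with the word metric $d_X$ coming from the (now compact, but as a metric space discrete-behaving) generating set $X$. Indeed, once we know that $\asdim(F_{\rm top}(X), \cE_{\cC(F_{\rm top}(X))}) = \asdim(F(X), \cE_{d_X})$, the problem is purely combinatorial: the underlying abstract group is a free group, and the metric $d_X$ is exactly the word metric on the Cayley graph of $F(X)$ with respect to the (possibly infinite) free generating set $X$. That Cayley graph is a tree, so $\asdim \le 1$ by the standard fact that trees (more generally, $\R$-trees or quasi-trees) have asymptotic dimension at most $1$; and since $X$ is non-empty the free group contains $\Z$ as a subgroup isometrically, which together with Corollary \ref{asdim_zero_group_compact} (or directly: $\Z$ with the word metric is not bounded, hence has asymptotic dimension $\ge 1$) gives $\asdim \ge 1$. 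Hence $\asdim(F_{\rm top}(X)) = 1$.

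In more detail, the first step is to invoke Proposition \ref{group-compact_equals_bounded_for_freetop} to replace the group-compact coarse structure on $F_{\rm top}(X)$ by $\cE_{d_X}$, so that $\asdim(F_{\rm top}(X))$ is computed from the metric space $(F(X), d_X)$. Second, I would establish the upper bound $\asdim(F(X), \cE_{d_X}) \le 1$. The cleanest route is to produce, for each integer $R > 0$, a uniformly bounded cover of $F(X)$ of multiplicity at most $2$ with Lebesgue number at least $R$: decompose the free group according to reduced words, using the standard ``colour by the length of the reduced word modulo $2R$, split into even/odd colour classes'' argument that exhibits trees as asymptotic-dimension-$\le 1$ spaces. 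Concretely, for a reduced word $w$ let $\ell(w)$ be its length; put $w$ into the block determined by $\lfloor \ell(w)/R\rfloor$, and take $\cU_0$ (resp. $\cU_1$) to be the connected components, in the Cayley tree, of the union of blocks with even (resp. odd) index — each such component has diameter at most $3R$ or so and the two families together cover $F(X)$, while any ball of radius $R/2$ lies in a single component. Third, for the lower bound: since $X \ne \emptyset$, pick $x \in X$; the cyclic subgroup $\langle x\rangle \cong \Z$ sits isometrically in $(F(X), d_X)$ (in a free group the word length of $x^n$ is $|n|$), and $\asdim(\Z) = 1$; by the subgroup monotonicity (Theorem \ref{asdim_of_a_subgroup}, applied to the generating family $\cF_{d_X}$, noting $(F(X), \cE_{d_X})$ is connected), $\asdim(F(X), \cE_{d_X}) \ge 1$. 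Alternatively one notes that $F_{\rm top}(X)$ is not compact (it is an infinite discrete-as-a-group object) and has a compact generating set $X \cup\{1\}\cup X^{-1}$, so Corollary \ref{asdim_zero_group_compact} gives $\asdim(F_{\rm top}(X)) > 0$, hence $\ge 1$.

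The main obstacle, and the place where care is needed, is the upper bound when $X$ is infinite: the generating set is then infinite, so the Cayley graph of $F(X)$ is an infinite-valence tree and one cannot appeal to ``finitely generated virtually free groups have $\asdim \le 1$'' as a black box. One must instead check that the colouring/component construction above genuinely yields a \emph{uniformly} bounded cover with the required Lebesgue number, independent of the valence; this is true because the relevant bound depends only on $R$ and on the fact that the graph is a tree (its tree-ness, not its degree, controls the diameters of the components between two ``cut levels''). I expect the write-up to spend most of its effort making that component-decomposition estimate precise; everything else is a direct citation of the results already proved in the paper.
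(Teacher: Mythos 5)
Your proposal is correct and follows essentially the same route as the paper: reduce via Proposition \ref{group-compact_equals_bounded_for_freetop} to the word metric $d_X$ on the free group, get the upper bound from the fact that the Cayley graph is a (possibly infinite-valence) tree, and get the lower bound from non-compactness/unboundedness. The only difference is that the paper simply cites Bell--Dranishnikov for ``metric trees have asymptotic dimension at most $1$'' (a statement already valid for arbitrary valence) rather than redoing the annulus-colouring argument you sketch.
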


\begin{proof}
Let  $F(X)$ denote the free group generated by $X$ (forgetting its topology).
By Proposition \ref{group-compact_equals_bounded_for_freetop}, 
$\asdim(F_{\rm top}(X))  = \asdim(F(X), \cE_{d_X})$, where $ \cE_{d_X}$ is the bounded coarse structure associated to the word metric $d_X$.
Let $T$ be the Cayley graph of  $F(X)$ with respect to the set of generators $X \subset F(X)$ and let $d_T$ be the natural distance on  $T$.
Note that $T$ is a tree because any nontrivial loop in $T$ would give rise to a non-trivial relation in $F(X)$.
Since $(F(X), d_X)$ is quasi-isometric to  $(T, d_T)$, they have the same asymptotic dimension (with respect to the bounded coarse structures determined by the given metrics).
Any metric tree has asymptotic dimension at most $1$ (\cite[Example, \S3.1]{Bell_and_Dranish}).  
Also, since $X$ is non-empty, $T$ is an unbounded tree and thus has positive asymptotic dimension.
Hence $ \asdim(T, \cE_{d_T})=  1$  and so 
\[
\asdim(F_{\rm top}(X))  = \asdim(F(X), \cE_{d_X}) = \asdim(T, \cE_{d_T})=  1.
\]
\end{proof}

Corollary \ref{asdim_of_freetop_on_compact} can be generalized to a large class of non-compact spaces as follows.

\begin{theorem}
\label{asdim_of _freetop_general}
If $X$ is a non-empty space that is homeomorphic to a closed subspace of a Cartesian product of metrizable spaces
then  $\asdim(F_{\rm top}(X)) =  1$.
\end{theorem}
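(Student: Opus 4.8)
The plan is to reduce the general case to the compact case (Corollary~\ref{asdim_of_freetop_on_compact}) by a two-sided squeeze: a lower bound showing $\asdim(F_{\rm top}(X))\geq 1$, and an upper bound showing $\asdim(F_{\rm top}(X))\leq 1$. The lower bound is cheap: since $X$ is non-empty, pick a point $x_0\in X$; the closed subgroup generated by $x_0$ is a copy of $\Z$ (with the discrete topology, since $X$ is Tychonoff and hence the inclusion $X\hookrightarrow F_{\rm top}(X)$ is an embedding and $F_{\rm top}(X)$ is algebraically free on $X$), and $\asdim(\Z)=1$. By Corollary~\ref{asdim_of_a_closed_subgroup}, $\asdim(F_{\rm top}(X))\geq 1$. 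So the entire content is the upper bound.

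For the upper bound I would use Corollary~\ref{asdim_by_subgroups_group_compact}: it suffices to show that for every compact $K\subset F_{\rm top}(X)$ the closed subgroup $\overline{\langle K\rangle}$ has asymptotic dimension $\leq 1$. The key structural fact about free topological groups (Graev's/Arhangel'skii--Tkachenko's theory, \cite[Chapter 7]{Arhangel}) is that $F_{\rm top}(X)$ carries the natural filtration $F_{\rm top}(X)=\bigcup_n B_n$ where $B_n$ is the set of reduced words of length $\leq n$, and when $X$ is homeomorphic to a closed subspace of a product of metrizable spaces, a compact subset $K$ is contained in some $B_n$ and moreover the ``letters'' appearing in words of $K$ lie in a single compact subset $C\subset X$ (this is the place where the hypothesis on $X$ is used — one needs that $X$ is such that $F_{\rm top}(X)$ has the property that compacta are ``bounded'' in the word-length filtration and that the projection to letters sends compacta to compacta; for such $X$ this is established in the literature, e.g. via the fact that $F_{\rm top}(X)$ is a $k_\omega$-space when $X$ is, or via Arhangel'skii's results on when $F_{\rm top}(X)$ has nice compact sets). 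Then $\langle K\rangle\subset F(C)$, the free group on $C$, and $\overline{\langle K\rangle}\subset \overline{F(C)}=F_{\rm top}(C)$ inside $F_{\rm top}(X)$ (using that the closure of $F_{\rm top}(C)$-as-a-subgroup is $F_{\rm top}(C)$ when $C$ is closed in $X$, again from the structure theory, e.g. \cite[Chapter 7]{Arhangel}). Since $C$ is compact Hausdorff, Corollary~\ref{asdim_of_freetop_on_compact} gives $\asdim(F_{\rm top}(C))=1$, and then $\asdim\bigl(\overline{\langle K\rangle}\bigr)\leq \asdim(F_{\rm top}(C))=1$ by Corollary~\ref{asdim_of_a_closed_subgroup}. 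Taking the supremum over $K\in\cC(F_{\rm top}(X))$ yields $\asdim(F_{\rm top}(X))\leq 1$, and combined with the lower bound we get equality.

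The main obstacle I anticipate is the interface step: showing that for $X$ homeomorphic to a closed subspace of a product of metrizable spaces, an arbitrary compact $K\subset F_{\rm top}(X)$ is contained in $F_{\rm top}(C)$ for some compact $C\subset X$ closed in $X$, with $F_{\rm top}(C)$ sitting inside $F_{\rm top}(X)$ as a \emph{closed} topological subgroup whose subspace topology is its own free-topological-group topology. The first half (compacta have bounded length and bounded letter-set) is exactly the kind of statement the hypothesis on $X$ is designed to deliver via Arhangel'skii's structure results; the subtle half is that the canonical continuous bijection $F_{\rm top}(C)\to \langle C\rangle\subset F_{\rm top}(X)$ is a topological embedding onto a closed subgroup, which again is part of the free-topological-group literature for $C$ compact (or for $C$ closed in $X$ under the stated hypotheses) — I would cite \cite[Chapter 7]{Arhangel} rather than reprove it. Once those facts are in hand, everything else is a clean application of Corollaries~\ref{asdim_of_a_closed_subgroup}, \ref{asdim_by_subgroups_group_compact} and \ref{asdim_of_freetop_on_compact}.
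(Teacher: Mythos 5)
Your proposal is correct and follows essentially the same route as the paper: reduce to the compact case by noting that for $X$ Dieudonn\'e complete the closure of the support of a compact subset of $F_{\rm top}(X)$ is a compact set $A\subset X$, that $F_{\rm top}(X,A)$ is then a closed subgroup isomorphic to $F_{\rm top}(A)$ (both facts cited from Arhangel'skii--Tkachenko), and then apply the subgroup supremum formula together with Corollary~\ref{asdim_of_freetop_on_compact}. The only cosmetic difference is that the paper applies Theorem~\ref{asdim_by_subgroups} directly to the family $\{F_{\rm top}(X,A)\mid A\subset X\text{ compact}\}$ rather than going through Corollary~\ref{asdim_by_subgroups_group_compact} and Corollary~\ref{asdim_of_a_closed_subgroup}, which amounts to the same argument.
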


\begin{proof}
A space $X$  is homeomorphic to a closed subspace of a Cartesian product of metrizable spaces  if and only if it is {\it Dieudonn\'e complete}, that is,
there exists a complete uniformity on $X$ (\cite[8.5.13]{Engelking}).
\smallskip

\noindent{\it Claim.}  For such a space the following holds:
\[
 \asdim(F_{\rm top}(X)) ~=~  \sup \left\{  \asdim\left(F_{\rm top}(A) \right) ~\big|~  A \subset X \text{ is compact}  \right\}.
 \]
Assuming the claim, the conclusion of the Theorem follows from Corollary \ref{asdim_of_freetop_on_compact} because
$\asdim\left(F_{\rm top}(A)  \right) =1$ for $A$ compact.
We now prove the claim.

The {\it support}  of a reduced 
word $g = x_1^{\pm 1} \cdots x_n^{\pm 1}  \in F_{\rm top}(X)$ is, by definition, the set $\supp(g) = \{x_1, \ldots, x_n\} \subset X$. 
The {\it support} of $B  \subset F_{\rm top}(X)$ is the set $\supp(B) = \cup_{g \in B} \supp(g)$.

Let $K \subset F_{\rm top}(X)$ be compact.  
Let $A$ be the closure of $\supp(K)$ in $X$.
Since $X$ is Dieudonn\'e complete,
\cite[Corollary 7.5.6]{Arhangel} implies that  $A$ is compact.
Clearly $K \subset F_{\rm top}(X,A)$, where $F_{\rm top}(X,A)$ denotes the subgroup of $F_{\rm top}(X)$ generated by $A$. 
By \cite[Corollary 7.4.6]{Arhangel}, 
if $A$ is compact then
$F_{\rm top}(X,A)$ is a closed subgroup of $ F_{\rm top}(X)$ and  $F_{\rm top}(X,A)$  is isomorphic to  $F_{\rm top}(A)$ as topological groups.
Hence $\cC(F_{\rm top}(X))|_{F_{\rm top}(X,A)}   =  \cC(F_{\rm top}(X,A))$  and
$
\asdim(F_{\rm top}(X,A) )  = \asdim(F_{\rm top}(A) ).
$
Applying Theorem \ref{asdim_by_subgroups} to the collection
 $\cH =  \{   F_{\rm top}(X,A) ~|~  A \subset X \text{ is compact} \}$
 yields the claim.
\end{proof}

We observe that  $F_{\rm top}(X)$ is typically not locally compact
and so Proposition \ref{locally_compact_groups} does not apply to it.
Combining various results of \cite{Arhangel} and \cite{Morris} yields
the following proposition, presumably well known to experts.

\begin{proposition}
\label{not_locally_compact}
Let $X$ be a locally compact metric space. Then $F_{\rm top}(X)$ is locally compact if and only if $X$ is discrete.
\end{proposition}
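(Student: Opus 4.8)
The plan is to prove both directions separately, with the ``only if'' direction being the substantive one. First, suppose $X$ is discrete. A locally compact discrete space is, at each point, locally a singleton, so $X$ is a disjoint union of points with no accumulation. For a discrete space the free topological group $F_{\rm top}(X)$ is just the free (abstract) group on $X$ equipped with the discrete topology (this follows from the universal property: any set map from a discrete $X$ to a discrete—hence Hausdorff topological—group is continuous, so the discrete topology on $F(X)$ satisfies the defining property, and uniqueness does the rest). A discrete group is trivially locally compact. More generally, if $X$ is locally compact and discrete it is a topological sum of one-point spaces, and the same reasoning applies. This direction requires only the universal property quoted in the excerpt.

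For the converse, assume $F_{\rm top}(X)$ is locally compact; I want to conclude $X$ is discrete. The idea is to exploit the structure theory of locally compact topological groups together with the known bad behaviour of $F_{\rm top}(X)$ for non-discrete $X$. First I would recall that $X$ embeds as a closed subspace of $F_{\rm top}(X)$ (by Markov's theorem, quoted in the excerpt, the embedding $i$ is a topological embedding, and by \cite[Chapter 7]{Arhangel} the image of $X$ is closed when $X$ is, say, $k_\omega$ or more generally nice — in any case for $X$ locally compact metric this holds). A closed subspace of a locally compact space is locally compact, which is consistent but not yet a contradiction. The real leverage comes from the following: a locally compact Hausdorff topological group has an open subgroup that is compactly generated, and compactly generated locally compact groups are, modulo compact normal subgroups, Lie groups (Gleason–Yamabe). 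But $F_{\rm top}(X)$, being algebraically free, has no nontrivial compact subgroups and no small subgroups in the relevant sense, so if it were locally compact it would have to be a discrete free group. I would make this precise by showing that a locally compact group which is algebraically free on its generating set $X$ must be discrete — e.g.\ by noting that a non-discrete locally compact group contains either a nontrivial compact subgroup or a one-parameter subgroup (a copy of $\R$), neither of which embeds in a free group. Hence $F_{\rm top}(X)$ is discrete, and therefore so is its closed subspace $X$.

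The main obstacle is the converse direction, and specifically pinning down exactly which structural fact about locally compact groups to invoke and citing it cleanly from \cite{Morris}: the cleanest route is ``a locally compact Hausdorff group with no nontrivial connected subgroups and no nontrivial compact subgroups is discrete,'' combined with the observation that any subgroup of a (discrete) free group that is compact or connected is trivial. One must be a little careful that $F_{\rm top}(X)$ genuinely has no small subgroups issue — this is where the algebraic freeness (every nontrivial element has infinite order and the group is torsion-free and residually free) does the work, ruling out compact subgroups; connected subgroups are ruled out because a free group, being residually finite, has no nontrivial divisible or connected-like subgroups. Once $F_{\rm top}(X)$ is shown discrete, concluding that $X$ is discrete is immediate since $X$ carries the subspace topology from $F_{\rm top}(X)$.
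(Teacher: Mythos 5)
Your forward direction matches the paper's (discrete $X$ gives a discrete, hence locally compact, $F_{\rm top}(X)$), but for the converse you take a genuinely different route. The paper argues: if $X$ is not discrete then $F_{\rm top}(X)$ is not first countable \cite[Theorem 7.1.20]{Arhangel}; since $X$ is locally compact metric, $F_{\rm top}(X)$ has no small subgroups by \cite[Corollary 1]{Morris}; and a locally compact group with no small subgroups is first countable \cite[Theorem 3.1.21]{Arhangel} --- a contradiction. You instead show directly that a locally compact Hausdorff group which is abstractly free must be discrete, via the dichotomy that a non-discrete locally compact group contains either a nontrivial compact subgroup or a subgroup abstractly isomorphic to $\R$ (van Dantzig in the totally disconnected case; Gleason--Yamabe and one-parameter subgroups otherwise, with the circle case folded into the compact alternative). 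That dichotomy is correct and the strategy works; it even buys more than the paper's argument, since it nowhere uses that $X$ is locally compact metric --- only that $F_{\rm top}(X)$ is algebraically free and contains $X$ as a subspace, which Markov's theorem supplies --- whereas the paper needs that hypothesis precisely to invoke Morris's no-small-subgroups theorem.

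The one place that needs repair is your justification for excluding the two kinds of subgroups. Torsion-freeness and residual freeness do \emph{not} rule out nontrivial compact subgroups: the $p$-adic integers form a compact, torsion-free group. Likewise ``residually finite, hence no connected-like subgroups'' is not an argument. The correct (and short) fix: by Nielsen--Schreier every subgroup of a free group is free, so every abelian subgroup is trivial or infinite cyclic. A copy of $\R$ is uncountable abelian, hence excluded. For a nontrivial compact subgroup $K$: if $K$ has torsion it cannot embed in a free group; otherwise take $1\neq g\in K$ and observe that $\overline{\langle g\rangle}$ is an infinite compact Hausdorff abelian group, hence uncountable by Baire category, hence not infinite cyclic --- again excluded. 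With these substitutions your proof is complete. Note also that closedness of $X$ in $F_{\rm top}(X)$ is not needed at the end: once $F_{\rm top}(X)$ is discrete, $X$ is discrete simply because it carries the subspace topology.
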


\begin{proof}
Clearly, if $X$ is discrete then $F_{\rm top}(X)$ is also discrete and hence also locally compact.

Assume $X$ is not discrete.
By \cite[Theorem 7.1.20]{Arhangel},  $F_{\rm top}(X)$ is not first countable.
By hypothesis, $X$ is a locally compact metric space and so \cite[Corollary 1]{Morris} asserts that
$F_{\rm top}(X)$ has no small subgroups, that is, there is a neighborhood of the identity which contains no subgroups other than the trivial group.
By \cite[Theorem 3.1.21]{Arhangel},  a locally compact group with no small subgroups is first countable. 
In particular,§ $F_{\rm top}(X)$ cannot be locally compact.
\end{proof}

Proposition \ref{not_locally_compact} implies that $F_{\rm top}(X)$ is not locally compact if the Tychonoff space $X$ contains a compact, metrizable, non-discrete subspace
(because if $Y$ is such a subspace of $X$ then, since $Y$ is compact,  \cite[Corollary 7.4.6]{Arhangel} gives that $F_{\rm top}(Y)$ is isomorphic as
a topological group to a closed subgroup of $F_{\rm top}(X)$ and so
$F_{\rm top}(Y)$ would be locally compact if $F_{\rm top}(X)$ was locally compact).

\begin{remark}
Let $G$ be a Hausdorff topological group.  Let $G^\delta$ denote the discrete group with same underlying group as $G$.
In the case $G = F_{\rm top}(X)$ and for $X$ as in Theorem \ref{asdim_of _freetop_general},
$\asdim(G) = 1 = \asdim( G^\delta)~$ (note that $G^\delta$ is algebraically a free group).
By contrast, if $C$ is the topological group of $2$-power roots of unity then
$\asdim(C)  > 0$ (Example \ref{EX_roots_of_unity}) whereas
$ \asdim(C^\delta) =0$ since $C$ is a torsion group.
\end{remark}

%\noindent$\bullet~\bullet~\bullet~\bullet~\bullet~\bullet~\bullet~\bullet~\bullet~\bullet~\bullet~\,\bullet$ 

%\noindent$\bullet$FIX ME:   

%%%%%%%%%%%%%%%%%%%%%%%%%%%%%%%%%%%%%%%%%%%%
%%
%%  References 
%%
%%%%%%%%%%%%%%%%%%%%%%%%%%%%%%%%%%%%%%%%%%%%

%\bibliographystyle{amsplain}
%\bibliography{NR-CSG.bib}

\begin{thebibliography}{10}

\bibitem{Abels}
H.~Abels, \emph{Reductive groups as metric spaces}, Groups: topological,
  combinatorial and arithmetic aspects, London Math. Soc. Lecture Note Ser.,
  vol. 311, Cambridge Univ. Press, Cambridge, 2004, pp.~1--20. \MR{2073343
  (2005i:20073)}

\bibitem{Antonyan}
Sergey~A. Antonyan, \emph{Proper actions on topological groups: applications to
  quotient spaces}, Proc. Amer. Math. Soc. \textbf{138} (2010), no.~10,
  3707--3716. \MR{2661569}

\bibitem{Arhangel}
Alexander Arhangel{\cprime}skii and Mikhail Tkachenko, \emph{Topological groups
  and related structures}, Atlantis Studies in Mathematics, vol.~1, Atlantis
  Press, Paris, 2008. \MR{2433295 (2010i:22001)}

\bibitem{Bell_and_Dranish}
G.~Bell and A.~Dranishnikov, \emph{Asymptotic dimension}, Topology Appl.
  \textbf{155} (2008), no.~12, 1265--1296. \MR{2423966 (2009d:55001)}

\bibitem{Carlsson_Goldfarb}
Gunnar Carlsson and Boris Goldfarb, \emph{On homological coherence of discrete
  groups}, J. Algebra \textbf{276} (2004), no.~2, 502--514. \MR{2058455
  (2005a:20078)}

\bibitem{Dranish_and_Smith}
A.~Dranishnikov and J.~Smith, \emph{Asymptotic dimension of discrete groups},
  Fund. Math. \textbf{189} (2006), no.~1, 27--34. \MR{2213160 (2007h:20041)}

\bibitem{Engelking}
Ryszard Engelking, \emph{General topology}, second ed., Sigma Series in Pure
  Mathematics, vol.~6, Heldermann Verlag, Berlin, 1989, Translated from the
  Polish by the author. \MR{1039321 (91c:54001)}

\bibitem{Grave}
Bernd Grave, \emph{Asymptotic dimension of coarse spaces}, New York J. Math.
  \textbf{12} (2006), 249--256 (electronic). \MR{2259239 (2007f:51023)}

\bibitem{Ji}
Lizhen Ji, \emph{Asymptotic dimension and the integral {$K$}-theoretic
  {N}ovikov conjecture for arithmetic groups}, J. Differential Geom.
  \textbf{68} (2004), no.~3, 535--544. \MR{2144540 (2006c:57025)}

\bibitem{Lueck}
Wolfgang L{\"u}ck, \emph{Survey on classifying spaces for families of
  subgroups}, Infinite groups: geometric, combinatorial and dynamical aspects,
  Progr. Math., vol. 248, Birkh\"auser, Basel, 2005, pp.~269--322. \MR{2195456
  (2006m:55036)}

\bibitem{Markov}
Andre\u{i}~Andreevi\u{c} Markov, \emph{On free topological groups}, Dokl. Akad.
  Nauk SSSR \textbf{31} (1941), 299--301.

\bibitem{Morris}
Sidney~A. Morris and H.~B. Thompson, \emph{Free topological groups with no
  small subgroups}, Proc. Amer. Math. Soc. \textbf{46} (1974), 431--437.
  \MR{0352318 (50 \#4805)}

\bibitem{Roe}
John Roe, \emph{Lectures on coarse geometry}, University Lecture Series,
  vol.~31, American Mathematical Society, Providence, RI, 2003. \MR{2007488
  (2004g:53050)}

\bibitem{Yu}
Guoliang Yu, \emph{The {N}ovikov conjecture for groups with finite asymptotic
  dimension}, Ann. of Math. (2) \textbf{147} (1998), no.~2, 325--355.
  \MR{1626745 (99k:57072)}

\end{thebibliography}

\def\cprime{$'$}
\providecommand{\bysame}{\leavevmode\hbox to3em{\hrulefill}\thinspace}
\providecommand{\MR}{\relax\ifhmode\unskip\space\fi MR }
% \MRhref is called by the amsart/book/proc definition of \MR.
\providecommand{\MRhref}[2]{%
  \href{http://www.ams.org/mathscinet-getitem?mr=#1}{#2}
}
\providecommand{\href}[2]{#2}

\end{document}